  \renewcommand{\paragraph}[1]{{\indent\em #1.}}
  \DeclareMathOperator{\pseud}{pseud}
\begin{document}
\title%
[Pseudogroups via pseudoactions]%
{Pseudogroups via pseudoactions:\\ Unifying local, global, and
  infinitesimal symmetry} \author{Anthony
  D.~Blaom}%\\ \mbox{} \\ {\sffamily DRAFT}}%
\date{\today}%
\address{E-mail: {\tt anthony.blaom@gmail.com}}%
\keywords{Lie algebroid, pseudogroup, Cartan connection, lie algebra
  action, pseudoaction}%
\subjclass[2010]{Primary
  58H05; % Pseudogroups and differential groupoids
  Secondary
  58D19, % Group actions and symmetry properties [under Global
                 % Analysis] 
  54H15% Transformation groups and semigroups
% 53C15, % General geometric structures on manifolds
% 58H15% Deformations of structures
% 53B15, % Other connections
% 53C07, % Special connections and metrics on vector bundles
% 53C05, % Connections, general theory
% 53A55, % Differential invariants (local theory), geometric objects
% 53A30, % Conformal differential geometry
% 58A15% Exterior differential systems (Cartan theory)
}%
\thispagestyle{empty}
\begin{abstract}
  A multiplicatively closed, horizontal foliation on a Lie groupoid
  may be viewed as a `pseudoaction' on the base manifold $M$. A
  pseudoaction generates a pseudogroup of transformations of $M$ in
  the same way an ordinary Lie group action generates a transformation
  group. Infinitesimalizing a pseudoaction, one obtains the action of
  a Lie algebra on $M$, possibly twisted. A global converse to Lie's
  third theorem proven here states that every twisted Lie algebra
  action is integrated by a pseudoaction. When the twisted Lie algebra
  action is complete it integrates to a twisted Lie group action,
  according to a generalization of Palais' global integrability
  theorem.
\end{abstract}
\maketitle
%% Dedication
\vspace{-\baselineskip}
{\small
\begin{center}
   {\em Dedicated to Richard W. Sharpe}
 \end{center}} \vspace{\baselineskip}%
% \tableofcontents
%%%%%%%%%%%%%%%%%%%%%%%%%%%%%%%%% Body starts
%%%%%%%%%%%%%%%%%%%%%%%%%%%%%%%%% here %%%%%%%%%%%%%%%%%%%%%%%
\section{Introduction}

% \subsection{Local versus global symmetry}\lab{unifying}

% Isometry groups are global objects which do not necessarily reflect
% the local symmetry enjoyed by a geometric structure. For example, the
% isometry group of the Euclidean plane ${\mathbb R}^2$ is
% three-dimensional, while that of the cylinder ${\mathbb R}^2/{\mathbb
%   Z}$ is only two-dimensional, despite the fact that the cylinder has
% just as much symmetry {\em locally} as the plane. Such a distinction
% may persist at the level of infinitesimal isometries: in the cited
% examples, the Lie algebras of Killing fields have the same dimension
% as the respective isometry groups. To account also for local symmetry,
% one considers instead {\em pseudogroups} of local isometries. But
% while isometry groups are simple geometric objects --- namely
% finite-dimensional smooth manifolds --- pseudogroups, as stand-alone
% objects, are less tractable.

\subsection{Pseudoactions}\lab{krd}
Unlike transformation groups, pseudogroups of transformations capture
simultaneously the phenomena of global and {\em local} symmetry. On
the other hand, a transformation group can be replaced by the action
of an abstract group which may have nice properties --- a Lie group in
the best scenario. Here we show how to extend the abstraction of group
actions to handle certain pseudogroups as well. These pseudogroups
include: (i) all {\em Lie} pseudogroups of finite type (both
transitive and intransitive) and hence the isometry pseudogroups of
suitably regular geometric structures of finite type; (ii) all
pseudogroups generated by the local flows of a smooth vector field;
and, more generally (iii) any pseudogroup generated by the
infinitesimal action of a finite-dimensional Lie algebra. Our
generalization of a group action on $M$, here called a {\df
  pseudoaction}, consists of a multiplicatively closed, horizontal
foliation on a Lie groupoid $G$ over $M$. Ordinary group actions are
recovered as action groupoids equipped with the canonical horizontal
foliation. Details appear in \S\ref{section2} below.

Differentiating a pseudoaction, one obtains a flat Cartan connection
on a Lie algebroid. Cartan connections on Lie algebroids, introduced
in \cite{Blaom_06} and studied further in \cite{Blaom_12,Blaom_13},
are related to the classical connections bearing the same name. By
definition, a Cartan connection on a Lie algebroid is just an ordinary
linear connection $\nabla $, suitably respecting the underlying Lie
algebroid structure. Every Lie algebroid over $M$ equipped with a flat
Cartan connection is the quotient by covering transformations of an
action algebroid over the universal cover of $M$ (see \S\ref{twot}
below and \cite{Blaom_13}). For this reason we call Lie algebroids
equipped with such a connection {\df twisted Lie algebra
  actions}. However, we emphasise that it is the Cartan connection,
and not the action, that is ordinarily manifest in practice. In
particular, this applies to the infinitesimal isometries of finite
type geometric structures \cite{Blaom_12} and, more generally, to
PDE's whose infinitesimal symmetries are of finite type.

As far as we know, pseudoactions were first introduced by Tang
\cite{Tang_06}, using the language of \'etale groupoids. We discovered
the notion independently, sketching a relationship between
pseudoactions and their infinitesimal analogues in \cite[Appendix
A]{Blaom_12}.

\subsection{Integrating infinitesimal symmetry}\lab{hec}
A very well-known
observation of Sophus Lie is sometimes
phrased as follows:
\begin{theorem}[Lie, c.\,1880 \cite{Hermann_75}]
  Differentiating the action of an $r$-dimensional group of
  transformations on $M$ at the identity, one obtains an
  $r$-dimensional Lie subalgebra of vector fields on $M$.
\end{theorem}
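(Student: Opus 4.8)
The plan is to construct the infinitesimal generators explicitly and then verify that they span a Lie subalgebra. Write $\mathfrak g = T_eG$ for the Lie algebra of the $r$-dimensional group $G$, and let $\Phi\colon G\times M\to M$, $(g,x)\mapsto g\cdot x$, denote the action. For each $X\in\mathfrak g$ I would introduce the associated \emph{fundamental vector field} $X_M$ on $M$ by differentiating the flow of the one-parameter subgroup $t\mapsto\exp(tX)$,
\[
  X_M(x) \equiv \frac{d}{dt}\Big|_{t=0}\exp(tX)\cdot x \;=\; (d\Phi_x)_e\,X,
\]
where $\Phi_x\colon G\to M$ is the orbit map $g\mapsto g\cdot x$. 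The assignment $X\mapsto X_M$ is manifestly linear, so its image is a linear subspace of the vector fields $\mathfrak X(M)$.

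First I would establish that this subspace is $r$-dimensional, i.e.\ that $X\mapsto X_M$ is injective. Since a \emph{group of transformations} is by definition a subgroup of the diffeomorphisms of $M$, and so acts faithfully, $X_M\equiv 0$ forces its flow $x\mapsto\exp(tX)\cdot x$ to be the identity for all $t$; hence $\exp(tX)=e$ for all $t$, and differentiating at $t=0$ gives $X=0$. Thus $\dim\{X_M : X\in\mathfrak g\}=\dim\mathfrak g=r$.

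The substantive step, and the one I expect to be the main obstacle, is showing that the image is closed under the Lie bracket of vector fields — equivalently, that $X\mapsto X_M$ is a Lie algebra (anti)homomorphism. The key observation I would exploit is that, for each fixed $x$, the right-invariant vector field $X^R$ on $G$ is $\Phi_x$-related to $X_M$: differentiating the identity $\exp(tX)\cdot(g\cdot x)=(\exp(tX)g)\cdot x$ at $t=0$ yields $(d\Phi_x)_g\bigl(X^R(g)\bigr)=X_M(g\cdot x)$. Granting this, the standard naturality of the bracket under related maps shows that $[X^R,Y^R]$ is $\Phi_x$-related to $[X_M,Y_M]$. Evaluating at $g=e$ and using the invariant-field identity $[X^R,Y^R]=-[X,Y]^R$, I would read off $[X_M,Y_M](x)=-[X,Y]_M(x)$ for every $x\in M$, so that $[X_M,Y_M]=-[X,Y]_M$ again lies in the image.

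Together these steps exhibit $\{X_M : X\in\mathfrak g\}$ as an $r$-dimensional subspace of $\mathfrak X(M)$ that is closed under bracket, hence as an $r$-dimensional Lie subalgebra, as claimed. The sign appearing in the (anti)homomorphism is merely an artefact of the left-versus-right convention and is immaterial to the conclusion; it can be absorbed by working instead with left-invariant fields or by passing to $-X_M$. The only genuine care required lies in the verification of the $\Phi_x$-relatedness and of the bracket identity for invariant fields; everything else is formal.
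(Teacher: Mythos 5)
Your proof is correct, and it is the standard argument: fundamental vector fields via the orbit map, injectivity from faithfulness of a transformation group, and bracket-closure from the $\Phi_x$-relatedness of $X^R$ and $X_M$ together with $[X^R,Y^R]=-[X,Y]^R$. Note, however, that the paper itself offers no proof of this statement to compare against --- it is quoted as a classical result of Lie (c.\,1880, via Hermann's edition) purely to motivate the converse problems the paper actually addresses; your write-up supplies the omitted classical argument, and the only points deserving the care you already flag are the relatedness computation, the sign convention (immaterial, since the image is a subspace and hence contains $-[X,Y]_M$ exactly when it contains $[X,Y]_M$), and the tacit smoothness of the action map $G\times M\to M$, which is implicit in the statement.
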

\noindent% 
The naive converse statement is false: Not every Lie algebra of vector
fields on $M$ is the infinitesimalization of a transformation group on
$M$. Several partial converse statements, and generalizations thereof,
are well-known. For example, Lie himself showed that every
$r$-dimensional subalgebra of vector fields is generated by a {\em
  local} Lie group of transformations on $M$; see e.g.,
\cite{Duistermaat_Kolk_00}. (Of course all Lie's groups were
local.) On the other hand, {\em any} Lie algebra
is isomorphic to the Lie algebra ${\mathfrak g}_0$ of right-invariant
vector fields of some Lie group $G_0$. This is the result nowadays
referred to as `Lie III' but is actually due to \'Elie Cartan
\cite{Cartan_30,Cartan_36}.\footnote{In this article `Lie I for
  groups' refers to the theorem that any Lie algebra integrated by a
  Lie group is in fact integrated by a simply-connected Lie group,
  unique up to isomorphism; `Lie II for groups' is the theorem that
  every morphism of Lie algebras ${\mathfrak g}_1 \rightarrow
  {\mathfrak g}_2$ is integrated by a unique Lie group morphism $G_1
  \rightarrow G_2$, where $G_1 $ is a simply-connected Lie group
  integrating ${\mathfrak g}_1$, and $G_2$ any Lie group integrating
  ${\mathfrak g}_2$.} For a beautiful generalization of Lie III to Lie
groupoids, see Crainic and Fernandes
\cite{Crainic_Fernandes_03,Crainic_Fernandes_11}.

With the abstraction of Lie groups and Lie algebras and their actions
in hand, one has the following partial converse to Lie's theorem, due
to Richard Palais:
\begin{theorem}[Palais' local integrability theorem \cite{Palais_57a}]
  Every action of an abstract Lie algebra ${\mathfrak g}_0$ on $M$ is
  integrated by a local action of the simply-connected Lie group $G_0$
  having ${\mathfrak g}_0$ as its Lie algebra.
\end{theorem}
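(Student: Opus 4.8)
The plan is to integrate the infinitesimal action by building an involutive distribution on the product $G_0\times M$ and reading the local action off its leaves, where $G_0$ is the simply-connected group integrating ${\mathfrak g}_0$ furnished by Lie~III and Lie~I. Write the action as a Lie algebra homomorphism $\rho\colon{\mathfrak g}_0\to\mathfrak X(M)$, fix a basis $e_1,\dots,e_r$ with $[e_i,e_j]=\sum_k c^k_{ij}e_k$, and put $X_i\equiv\rho(e_i)$, so that $[X_i,X_j]=\sum_k c^k_{ij}X_k$. Let $\vec e_i$ be the right-invariant vector field on $G_0$ extending $e_i$; its flow is left translation by $\exp(te_i)$. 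On $G_0\times M$ I would form the vector fields $Z_i\equiv\vec e_i\oplus(-X_i)$; the right-invariant fields satisfy $[\vec e_i,\vec e_j]=-\sum_k c^k_{ij}\vec e_k$, so a short computation gives $[Z_i,Z_j]=-\sum_k c^k_{ij}Z_k$, the crucial input being that $\rho$ preserves brackets. Since the $\vec e_i$ frame $TG_0$, the $Z_i$ are everywhere independent and span a rank-$r$ distribution $D$ that is involutive by this bracket computation, hence integrable by Frobenius.

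Next I would extract the local action from the resulting foliation $\mathcal F$ of $G_0\times M$. Because $D$ projects isomorphically onto the $TG_0$ factor, the leaf $L_x$ through $(e,x)$ maps by the projection $G_0\times M\to G_0$ to a local diffeomorphism onto a neighborhood of $e$; over that neighborhood $L_x$ is the graph of a smooth map $g\mapsto\Phi(g,x)$, and this defines the candidate local action, with $\Phi(e,x)=x$ immediate. The homomorphism property rests on one observation: since each $\vec e_i$ is right-invariant, $D$ is invariant under the diffeomorphisms $R_a\times\mathrm{id}$ of $G_0\times M$, so $\mathcal F$ is carried to itself by these translations. Tracking the leaf through $(e,x)$ under $R_a\times\mathrm{id}$ therefore relates $\Phi(gh,x)$ to $\Phi(g,\Phi(h,x))$ on the overlap of their domains, yielding the action axiom $\Phi(gh,x)=\Phi(g,\Phi(h,x))$ wherever both sides are defined (the opposite convention appears if one uses left-invariant fields instead). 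Smoothness of $\Phi$ jointly in $(g,x)$, and openness of its domain, follow from the smooth dependence of the leaves of a Frobenius foliation on the transverse initial data $x$.

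I expect the main obstacle to be exactly the word \emph{local}: the leaf $L_x$ is an immersed submanifold that in general covers, or wraps around, $G_0$ rather than projecting diffeomorphically onto all of it, so the graph description --- and hence $\Phi(\cdot,x)$ --- is only available on a neighborhood of $e$ whose size may shrink as $x$ varies. The care needed is to organize these leafwise graphs into a single smooth map on an \emph{open} neighborhood $U$ of $\{e\}\times M$ in $G_0\times M$, and to check that the translation-invariance argument for the action axiom stays within $U$; this is where the incompleteness of the vector fields $X_i$ manifests, and it is the reason no global action is obtained without a completeness hypothesis.
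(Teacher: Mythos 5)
Your argument is correct in substance, and it is a genuinely different proof from the paper's: it is essentially the classical Frobenius/graph argument (Palais' original route; compare Michor, Theorem 6.5), which the paper explicitly positions its own ``short and novel proof using Lie groupoids'' against. You span an involutive rank-$r$ distribution $D$ on $G_0\times M$ by $Z_i=\vec e_i\oplus(-X_i)$, integrate by Frobenius, read the local action off the leaves as graphs over a neighbourhood of the identity, and obtain the action axiom from invariance of $D$ under the right translations $R_a\times\mathrm{id}$. The paper's proof (\S\ref{hgn}) never leaves the groupoid world: by Dazord's theorem the action algebroid ${\mathfrak g}_0\times M$ is integrated by a Lie groupoid $G$, taken source-simply-connected; Lie II yields a morphism $\Omega\colon G\to G_0$ integrating the projection $\omega$; the map $E(\exp\xi,m)=\Phi^1_{\xi^{\mathrm{R}}_{\mathrm{c}}}(m)$ is a diffeomorphism from a neighbourhood of the identity section onto a neighbourhood $Z_{\mathrm{big}}$ of $M$ in $G$, with inverse $\Omega\times\alpha$; a localization-of-multiplication lemma (Appendix \ref{gpmu}) produces $Z$ with $ZZ\subset Z_{\mathrm{big}}$; and the action is $\phi_g(m):=\beta(E(g,m))$, the axiom $\phi_h\phi_g=\phi_{hg}$ being a one-line consequence of injectivity of $\Omega\times\alpha$ and the fact that $\Omega$ is a groupoid morphism. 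Your route is more elementary and, notably, independent of Dazord's integrability theorem, on which the paper's proof deliberately leans; its cost is exactly the bookkeeping you flag in your last paragraph --- assembling leafwise graphs, whose $g$-domains shrink with $x$, into a single smooth map on an open neighbourhood $W$ of $\{e\}\times M$ and keeping the translation argument inside $W$ --- and that is where the real work lies in Palais' and Michor's write-ups; your sketch leaves it to be done. The paper's route packages those domain issues once into the tubular-neighbourhood argument and the appendix lemma, and the objects it builds ($G$, $\Omega$, $E$) are recycled later in the paper (the chain model of \S\ref{concrete}, the integrating pseudoaction of \S\ref{tuy}, and the global theorem of \S\ref{onu}).

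One sign wrinkle to repair. You define the structure constants via the left-invariant bracket (this is what makes $[\vec e_i,\vec e_j]=-\sum_k c^k_{ij}\vec e_k$) and take $\rho$ to be a homomorphism; your leaves are then tangent to $Z_i=\vec e_i\oplus(-X_i)$, so the curve in a leaf lying over $t\mapsto\exp(te_i)$ has $M$-velocity $-X_i$: the local action you construct has infinitesimal generators $-X_i$, not $X_i$. This is forced --- with the left-invariant bracket convention, the generator map of any left action is an anti-homomorphism --- and it is harmless: either adopt the right-invariant bracket convention on ${\mathfrak g}_0$ (as the paper implicitly does when it speaks of the Lie algebra of right-invariant vector fields), in which case $Z_i=\vec e_i\oplus X_i$ becomes the involutive choice and all signs are positive, or observe that each flow map $\Phi^t_{X_i}$ still equals $\phi_{\exp(-te_i)}$, so the constructed action integrates the given algebra action in the sense required by the theorem. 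As written, though, your claim that the graphs define a local action with generators $X_i$ is off by this sign.
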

\noindent%
In \S\ref{hgn} we offer a short and novel proof of Palais' theorem
using Lie groupoids. This can be compared to Palais' original proof or
a contemporary proof such as \cite[Theorem 6.5]{Michor_08}. An
interesting question, addressed by Kamber and Michor
\cite{Kamber_Michor_04} but not here, is whether the space $M$ can be
`completed' to larger space on which the action of $G_0$ becomes
global.

For the special case of complete Lie algebra actions, Palais has
already provided the following global result:
\begin{theorem}[Palais' global integrability theorem \cite{Palais_57a}]
  If ${\mathfrak g}_0 $ above acts by complete vector fields, then the
  action is integrated by a global action of $G_0$ on $M$.
\end{theorem}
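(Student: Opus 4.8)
The plan is to upgrade the local action supplied by Palais' local integrability theorem to a global one, with completeness of the generating vector fields doing the essential work and simple-connectedness of $G_0$ resolving the inevitable ambiguity. Write $\hat X \in \mathfrak{X}(M)$ for the vector field assigned to $X \in \mathfrak g_0$, so that each $\hat X$ is complete by hypothesis and its flow $\phi^X_t$ is a one-parameter group of diffeomorphisms of $M$ defined for all $t \in \mathbb R$. On the group side $t \mapsto \exp(tX)$ is the corresponding one-parameter subgroup of $G_0$, and the defining property of the local action $\Phi$ is that $\Phi(\exp(tX),m) = \phi^X_t(m)$ for $t$ near $0$. The idea is that completeness lets us run these flows for all time, and that $G_0$, being generated by $\exp(\mathfrak g_0)$, can then be made to act by composing such flows.

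First I would build a global action on a full neighborhood of the identity. Fix a basis $X_1,\dots,X_r$ of $\mathfrak g_0$; the map $(t_1,\dots,t_r)\mapsto \exp(t_1X_1)\cdots\exp(t_rX_r)$ is a diffeomorphism from a box onto a symmetric neighborhood $V$ of $e$, and I would define the action of $\exp(t_1X_1)\cdots\exp(t_rX_r)$ to be $\phi^{X_1}_{t_1}\circ\cdots\circ\phi^{X_r}_{t_r}$. Because the $\hat X_i$ are complete, this composite is a globally defined diffeomorphism of $M$ for every value of the parameters, so we obtain a smooth map $V\times M\to M$. Comparing with the local action on a smaller neighborhood, where both are governed by the same infinitesimal data, shows the two agree; hence the new map is smooth and satisfies the homomorphism identity $\Phi_{gh}=\Phi_g\circ\Phi_h$ whenever $g$, $h$, and $gh$ all lie in $V$.

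Next I would extend to all of $G_0$. Since $G_0$ is connected, every $g$ factors as $g=g_1\cdots g_k$ with each $g_i\in V$, and I would set $\Phi_g=\Phi_{g_1}\circ\cdots\circ\Phi_{g_k}$. The main obstacle is well-definedness: a single $g$ admits many such factorizations, and one must show they all yield the same diffeomorphism. This is exactly a monodromy problem, and here simple-connectedness of $G_0$ is indispensable. Two factorizations of the same element may be joined by a finite sequence of elementary moves --- subdividing a factor, amalgamating two adjacent factors whose product still lies in $V$, or inserting a cancelling pair $h\,h^{-1}$ with $h\in V$ --- and each such move preserves the composite diffeomorphism by the local homomorphism identity established above. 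That any two factorizations are connected by such moves is precisely the statement that the relevant space of words over $V$ covers $G_0$ with trivial monodromy, which holds because $\pi_1(G_0)=0$. Equivalently, in the groupoid language of \S\ref{hgn}, the local action is a local Lie groupoid integrating the action algebroid $\mathfrak g_0\ltimes M$; completeness makes its source fibers extend to full copies of $G_0$, and source-simple-connectedness forces the integrating groupoid to be the action groupoid $G_0\ltimes M$, whose target map is the sought global action.

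Finally I would verify that the resulting $\Phi\colon G_0\times M\to M$ is a smooth global action: smoothness and the action axioms are local in $g$ and follow from the neighborhood construction together with the homomorphism property, while differentiating $t\mapsto \Phi(\exp(tX_i),m)$ at $t=0$ recovers $\hat X_i$ on the chosen basis, hence $\hat X$ for all $X$ by linearity, so $\Phi$ integrates the given infinitesimal action.
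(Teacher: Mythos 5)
Your overall strategy---globalize the individual flows via completeness, establish a local homomorphism into $\operatorname{Diff}(M)$, then extend to all of $G_0$ by the monodromy principle---is a legitimate classical route, and the word/monodromy part of your argument is standard once its input is secured. The gap lies precisely at that input: the claim that $\Phi_{gh}=\Phi_g\circ\Phi_h$ holds, \emph{as an identity of globally defined diffeomorphisms of $M$}, whenever $g,h,gh$ lie in the coordinate neighbourhood $V$. Palais' local theorem furnishes an action defined only on a neighbourhood $W$ of $\{e\}\times M$ in $G_0\times M$, and nothing guarantees $W\supset V'\times M$ for any neighbourhood $V'$ of $e$: the group-size for which the local identity is available shrinks as $m$ moves around $M$. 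So ``comparing with the local action, where both are governed by the same infinitesimal data'' yields the identity only for $(g,h,m)$ in a neighbourhood of $\{(e,e)\}\times M$, not for fixed $g,h\in V$ and \emph{all} $m\in M$; upgrading from the former to the latter is the entire analytic content of the theorem (the uniformity problem). Note also that your justification never actually invokes completeness at this step---you use completeness only to make each $\Phi_g$ globally defined---and the paper's own example of a non-associative local Lie group in \S\ref{section2} shows the inference fails without it: there two \emph{commuting} vector fields $v_1,v_2$ satisfy $\exp(v_1)(\exp(v_2)m)\ne\exp(v_2)(\exp(v_1)m)$ at points where both compositions are defined, even though both sides are governed by identical infinitesimal data. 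A correct proof must feed completeness into the propagation of the homomorphism identity, and yours provides no mechanism for that.

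This is exactly what the paper's proof supplies, and where the two arguments genuinely differ. The paper works with a source-simply-connected Lie groupoid $G$ integrating ${\mathfrak g}_0\times M$ (Theorems \ref{lie3} and \ref{mjy}) and the morphism $\Omega\colon G\to G_0$ integrating the projection ${\mathfrak g}_0\times M\to{\mathfrak g}_0$; completeness is used to show that the relevant right-invariant vector fields are complete on each source fibre $P$, whence, by the covering criterion of Appendix \ref{uniformity} (a local diffeomorphism pulling back a connection to a \emph{complete} connection is a covering map), the restriction $\Omega\colon P\to G_0$ is a covering, hence a diffeomorphism since $P$ is simply connected. Then $\Omega\times\alpha\colon G\to G_0\times M$ is a global diffeomorphism, and the action axioms for $\phi_g(m):=\beta(E(g,m))$ follow from groupoid multiplication together with injectivity of $\Omega\times\alpha$, so no pointwise uniformity issue ever arises. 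Your closing remark that ``completeness makes its source fibers extend to full copies of $G_0$'' asserts precisely this key lemma without proof; making it precise is where the covering-space argument, or an equivalent use of completeness, is unavoidable.
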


\vspace{0.75\baselineskip}%
Now according to the constructions outlined in \S\ref{krd} above, we
have the following generalization of Lie's theorem cited above: {\it
  Differentiating a pseudoaction one obtains a twisted Lie algebra
  action.} The main preoccupation of the present paper will be to
prove the following {\em global} converse:
\begin{theorem}[Lie III for pseudoactions]
  Every twisted Lie algebra action is integrated by some pseudoaction.
\end{theorem}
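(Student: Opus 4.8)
The plan is to reduce the problem to integrating an honest Lie algebra action on a simply-connected manifold and then to descend the result to $M$. First I would invoke the structural result recalled above, from \cite{Blaom_13}: a twisted Lie algebra action $(A,\nabla)$ over $M$ is the quotient, by the deck group $\Gamma=\pi_1(M)$, of an action algebroid $\mathfrak{g}_0\ltimes\tilde M$ over the universal cover $\tilde M\to M$. Here $\mathfrak{g}_0$ is the finite-dimensional Lie algebra of $\nabla$-flat sections, and $\Gamma$ acts by algebroid automorphisms preserving the pulled-back connection. It therefore suffices to integrate $\mathfrak{g}_0\ltimes\tilde M$ by a pseudoaction carrying a compatible $\Gamma$-action, and then to form the quotient.

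To integrate over $\tilde M$, I would let $\tilde G$ be a source-simply-connected Lie groupoid integrating the action algebroid $\mathfrak{g}_0\ltimes\tilde M$; such a groupoid exists because transformation algebroids are integrable. Under the trivialization $A\cong\mathfrak{g}_0\times\tilde M$ furnished by the flat sections, the Cartan connection is the canonical flat connection on the trivial bundle, and I would use it to define a horizontal distribution on $\tilde G$. Flatness makes this distribution involutive, so by Frobenius it is locally integrable, while source-simply-connectedness is exactly what allows the leaves to be propagated along each source fibre without monodromy, producing a genuine foliation $\tilde{\mathcal F}$. When the $\mathfrak{g}_0$-action is complete, $\tilde G$ collapses to the action groupoid $G_0\ltimes\tilde M$ and $\tilde{\mathcal F}$ to its canonical foliation with leaves $\{g\}\times\tilde M$, recovering Palais' global theorem; the incomplete case is precisely where one must abandon the action groupoid and exploit the extra room in the source-simply-connected integration, Palais' local theorem then guaranteeing only that source fibres map to $G_0$ near the identity.

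Finally I would descend. By functoriality of source-simply-connected integration (Lie's second theorem for groupoids \cite{Crainic_Fernandes_03}), the $\Gamma$-action on $\mathfrak{g}_0\ltimes\tilde M$ integrates to an action on $\tilde G$ by foliation-preserving automorphisms; since $\Gamma$ is discrete and acts freely and properly on $\tilde M$, the induced action on $\tilde G$ is free and proper, so the quotient $G:=\tilde G/\Gamma$ is a Lie groupoid over $M$ and $\tilde{\mathcal F}$ descends to a horizontal foliation $\mathcal F$ on $G$. Differentiating $(G,\mathcal F)$ then returns the flat Cartan connection on the quotient algebroid, namely $(A,\nabla)$, in the sense of \S\ref{section2}, exhibiting $(G,\mathcal F)$ as the sought pseudoaction.

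I expect the crux to be the verification, in the incomplete case, that $\tilde{\mathcal F}$ is multiplicatively closed, that is, that the product of two leaves is again a leaf. Transversality and the correct leaf dimension should follow formally from $\nabla$ being a Cartan connection, since the horizontal lift then splits the anchor sequence; but multiplicative closure is the genuinely new point, and it is what separates an arbitrary flat horizontal foliation from a pseudoaction. I anticipate proving it by integrating the infinitesimal compatibility of $\nabla$ with the bracket along source fibres, namely by showing that this compatibility is the derivative of the multiplicativity of $\tilde{\mathcal F}$ and then invoking connectedness of the source fibres. Secondary technical matters, such as Hausdorffness of $\tilde G$ and $G$ and the precise sense in which differentiation inverts integration, I would treat as in \S\ref{section2}.
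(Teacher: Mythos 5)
Your overall architecture --- pull $({\mathfrak g},\nabla)$ back to an action algebroid over $\tilde M$ via the structure theorem of \cite{Blaom_13}, integrate over $\tilde M$, then descend by the deck group using Lie II --- is the same as the paper's \S\ref{hes}. But at the two places where the paper does its real work, your proposal has genuine gaps. The first is the existence of the foliation $\tilde{\mathcal F}$ on $\tilde G$. Producing the Cartan connection $D\subset T\tilde G$ is fine (integrate $s=J^1+\nabla$ to $S\colon\tilde G\to J^1\tilde G$ by Lie II for groupoids), but you then assert that flatness of $\nabla$ makes $D$ involutive, ``so by Frobenius'' it integrates. That assertion is the entire difficulty, not a one-liner: $\nabla$ is only the derivative of $S$ along the identity section, whereas involutivity of $D$ is a condition at every point of $\tilde G$, and relating the two requires propagating flatness along source fibres using the multiplicativity of $S$. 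This is precisely Theorem \ref{onepoint}, which the paper attributes to Salazar and pointedly does \emph{not} use in proving Theorem \ref{lie3}, remarking that both known proofs involve techniques not needed elsewhere in the paper; the source-connectedness hypothesis in that theorem already signals that no pointwise involutivity computation can suffice. The paper sidesteps the issue entirely by building the untwisted integration by hand: the chain groupoid of \S\ref{concrete}, constructed from a local $G_0$-action supplied by Palais' theorem, carries an explicit morphism $\Omega\colon G\to G_0$, and the leaves of ${\mathcal F}$ are simply the connected components of the fibres of $\Omega$, so no distribution ever has to be integrated. Note also that your closing paragraph misidentifies the crux: multiplicative closure is the \emph{easy} direction (Proposition \ref{onepoint}: once $D$ is tangent to a foliation and $S$ is a groupoid morphism, closure of $\widehat{\mathcal F}$ follows); what you have assumed away is the existence of the foliation itself.

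The second gap is that you verify only \emph{formal} integration: differentiating $(G,{\mathcal F})$ returns $\nabla$. But ``integrates'' in the statement means $\pseud({\mathcal F})=\pseud(\nabla)$ --- the pseudogroup of the pseudoaction coincides with the pseudogroup generated by flows of local $\nabla$-parallel sections --- and a formal integration is not a priori an integration. The paper proves this equality by hand: for the explicit model, by matching bisections integrating ${\mathcal F}$ with flows of constant sections via the identity $\Omega(\Phi^t_{\xi_\mathrm{c}^\mathrm{R}}(m))=\exp(t\xi)$ in the proof of Proposition \ref{tuy}; and in the twisted case by the two inclusions proved in \S\ref{hes}, where flows upstairs on $\tilde M$ must be pieced together from flows of locally defined sections of ${\mathfrak g}$ downstairs (the covering $\tilde M\to M$ does not carry global flows to global flows). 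The paper's final assertion --- that every formal integration on a source-connected groupoid is a bona fide integration --- would close this gap, but it is itself part of Theorem \ref{lie3}, and its proof relies on comparison with the explicitly constructed simultaneous integration, so you cannot appeal to it in a blind proof.
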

\noindent%
Here we mean `integrated' in the strongest sense: the pseudogroup
defined by the pseudoaction, and the pseudogroup generated by local
flows of the infinitesimal generators of the Lie algebra action,
coincide. The integrating pseudoaction is uniquely determined, up to
isomorphism, if we insist the underlying groupoid has simply-connected
source fibres. See the restatement Theorem \ref{lie3} below for
details.

Under an appropriate completeness hypothesis we also obtain a
generalization of Palais' global integrability theorem:
\begin{theorem}
  Every complete twisted Lie algebra action is integrated by a twisted
  Lie group action.
\end{theorem}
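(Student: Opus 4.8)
The plan is to bootstrap off Palais' global integrability theorem by passing to the universal cover, where the twisting disappears. By the structural result recalled in \S\ref{krd} (see \cite{Blaom_13}), a twisted Lie algebra action on $M$ is the quotient by covering transformations of an honest action algebroid $\tilde M \times \mathfrak g_0$ over the universal cover $\tilde M$, for some finite-dimensional Lie algebra $\mathfrak g_0$; equivalently, pulling the flat Cartan connection back along the covering projection $\pi \colon \tilde M \to M$ trivializes it and produces a genuine action of $\mathfrak g_0$ on $\tilde M$. The deck group $\Gamma = \pi_1(M)$ acts on $\tilde M$ and carries the twisting data: each $\gamma \in \Gamma$ covers the identity on $M$ and hence preserves the infinitesimal action up to an automorphism of $\mathfrak g_0$, defining a homomorphism $\rho_0 \colon \Gamma \to \mathrm{Aut}(\mathfrak g_0)$ with respect to which the $\mathfrak g_0$-action on $\tilde M$ is equivariant.

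First I would verify that completeness ascends to the cover. By hypothesis the infinitesimal generators of the twisted action are complete vector fields on $M$; their $\pi$-related lifts generate the $\mathfrak g_0$-action on $\tilde M$, and a lift of a complete vector field along a covering map is again complete, since integral curves lift for all time by the path-lifting property. Hence the $\mathfrak g_0$-action on $\tilde M$ is complete, and Palais' global integrability theorem applies: it is integrated by a global action of the simply-connected Lie group $G_0$ with Lie algebra $\mathfrak g_0$ on $\tilde M$.

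It remains to descend this global action to $M$. Here I would integrate the infinitesimal twisting $\rho_0$ to the group level: because $G_0$ is simply connected, Lie II for groups integrates each $\rho_0(\gamma) \in \mathrm{Aut}(\mathfrak g_0)$ to a unique $\rho(\gamma) \in \mathrm{Aut}(G_0)$, and uniqueness forces $\rho \colon \Gamma \to \mathrm{Aut}(G_0)$ to be a homomorphism. The twisted equivariance of the $G_0$-action on $\tilde M$ --- that each deck transformation $\gamma$ intertwines the action of $g$ with that of $\rho(\gamma)(g)$ --- then follows from the corresponding infinitesimal equivariance, since both sides are global $G_0$-actions with the same generating vector fields, and the action of a connected group is determined by its infinitesimal generators. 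Quotienting the global transformation groupoid $\tilde M \rtimes G_0$ by the resulting $\Gamma$-action (on $\tilde M$ by deck transformations and on $G_0$ via $\rho$) produces a Lie groupoid over $M$ carrying the descended flat Cartan connection; this is the sought twisted Lie group action, and by construction it integrates the original twisted Lie algebra action.

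The main obstacle is the descent step, and specifically the passage from infinitesimal to global twisted equivariance: the infinitesimal statement is immediate from the construction, but promoting it to the group level requires checking that the two $G_0$-actions being compared genuinely share the same infinitesimal data, so that the rigidity of connected group actions can be invoked. A secondary point meriting care is the precise definition of completeness for a twisted action, and the verification that it is exactly the hypothesis making the lifted action complete in the sense required by Palais.
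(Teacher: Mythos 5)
Your construction follows the same route as the paper's: pull $({\mathfrak g},\nabla)$ back to the universal cover, where it becomes an action algebroid ${\mathfrak g}_0\times\tilde M$ with the deck group $\Lambda$ acting through the monodromy $\mu\colon\Lambda\to\mathrm{Aut}({\mathfrak g}_0)$; apply Palais' global theorem on $\tilde M$; integrate the twist to $\nu\colon\Lambda\to\mathrm{Aut}(G_0)$ by Lie II for groups; and descend the action groupoid to $G_0\times_\nu M$. The one genuine difference is how the twisted equivariance $\lambda(g\cdot\tilde m)=\nu_\lambda(g)\cdot\lambda(\tilde m)$ is established: you compare the two global actions $g\mapsto\lambda\circ\Phi_g\circ\lambda^{-1}$ and $g\mapsto\Phi_{\nu_\lambda(g)}$ of the connected group $G_0$, note they have the same infinitesimal generators, and invoke rigidity of connected-group actions. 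That argument is correct and more elementary than the paper's, which instead obtains the $\Lambda$-action on $G_0\times\tilde M$ by lifting the algebroid action via Lie II for pseudoactions (Theorem~\ref{mre}) and pins down the descended foliation by the uniqueness in Lie I (Theorem~\ref{mjy}).

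There is, however, a genuine gap in your last sentence. In this paper ``integrates'' has a precise meaning (\S\ref{lie3}): equality of pseudogroups, $\pseud({\mathcal F}_0)=\pseud(\nabla)$. What your construction delivers ``by construction'' is only a \emph{formal} integration: the Lie algebroid of $G_0\times_\nu M$ is ${\mathfrak g}$ and the descended Cartan connection is $\nabla$. The paper is explicit that the two notions do not coincide in general, and the implication ``formal integration $\Rightarrow$ integration'' (valid when the groupoid is source-connected, as $G_0\times_\nu M$ is, its source fibres being copies of $G_0$) is exactly the second assertion of Theorem~\ref{lie3}; its proof requires the nontrivial piecing argument of \S\ref{hes}, which shows that a local transformation of $M$ covered by $g\,\cdot$ on $\tilde M$, with $g=\exp(\xi_1)\cdots\exp(\xi_k)$, is locally a composition of flows of locally defined $\nabla$-parallel sections of ${\mathfrak g}$ --- the intermediate points of these flows wander over $\tilde M$ and must be tracked through the covering. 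Your proof is completed simply by citing that result, but as written the pseudogroup equality is asserted, not proved. A second, smaller issue --- which you flag yourself --- is the reading of the completeness hypothesis: a twisted action has no globally defined infinitesimal generators on $M$ (the monodromy obstructs them), so ``the generators are complete vector fields on $M$'' is not the hypothesis. Completeness means the geodesics of $\nabla$ are complete; the transfer to $\tilde M$ goes by noting that geodesics lift and project along the covering, and that geodesics of the canonical flat connection on ${\mathfrak g}_0\times\tilde M$ are precisely the curves $t\mapsto(\xi,\Phi^t_{\xi^\dagger}(\tilde m))$, so completeness of $\nabla$ is equivalent to completeness of every $\xi^\dagger$, which is what Palais' theorem needs.
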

\noindent%
Twisted Lie group actions are defined in \S\ref{tg}; completeness in
\S\ref{twisted}.

To complete this brief survey of old and new integrability results, we
note a recently published theorem which applies in the special case of
{\em transitive} actions:
\begin{theorem}[\cite{Blaom_13}]
  A smooth manifold $M$ admits a transitive, geometrically closed,
  twisted Lie algebra action if and only if it is locally homogeneous.
\end{theorem}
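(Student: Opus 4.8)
The plan is to prove both implications by passing through the integrating pseudoaction supplied by Lie III for pseudoactions, exploiting the dictionary between transitivity of the algebroid and transitivity of the generated pseudogroup. Throughout, write $A \to M$ for the Lie algebroid underlying the twisted Lie algebra action, $\rho \colon A \to TM$ for its anchor, $\nabla$ for the flat Cartan connection, and $\Gamma$ for the pseudogroup of local transformations of $M$ generated by the local flows of the vector fields $\rho(a)$, with $a$ ranging over local sections of $A$. I read \emph{transitive} as surjectivity of $\rho$, \emph{locally homogeneous} as the condition that the local symmetry pseudogroup acts transitively — for every ordered pair $(p,q)$ there is an element defined near $p$ carrying $p$ to $q$ — and \emph{geometrically closed} as the closure condition under which $\Gamma$ is a bona fide (closed) pseudogroup, so that the two sides of the asserted equivalence match exactly.

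For the forward implication I would integrate $(A,\nabla)$ by Lie III for pseudoactions to a pseudoaction on a Lie groupoid $G$ over $M$, and invoke the fact that the pseudogroup defined by this pseudoaction coincides with $\Gamma$. Surjectivity of $\rho$ means the generators $\rho(a)$ span $TM$ everywhere, so the $\Gamma$-orbits are open; as open sets partitioning a connected component they are also closed, and hence each orbit fills a component. Thus $\Gamma$ already acts transitively on each connected piece of $M$. The geometrically closed hypothesis is then what guarantees that $\Gamma$ is the \emph{full} local symmetry pseudogroup of a genuine finite-type structure — equivalently, that the infinitesimal data close up to an honest local homogeneous model — so that $M$ is locally homogeneous in the intended sense rather than merely carrying an abstract transitive pseudogroup.

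For the converse I would reverse the construction, recovering $(A,\nabla)$ from the local symmetries of the locally homogeneous $M$. The idea is to assemble the sheaf of infinitesimal symmetries — vector fields whose local flows belong to $\Gamma$ — into a finite-rank vector bundle $A \to M$, with bracket and anchor inherited from the Lie algebra of vector fields and a canonical linear connection $\nabla$ whose parallel sections are precisely the infinitesimal symmetries. Finite-dimensionality is the decisive input: it is the finite type of the locally homogeneous structure that forces the germs of infinitesimal symmetries to form a finite-dimensional space at each point, varying smoothly to yield $A$. Transitivity of the symmetry pseudogroup gives surjectivity of $\rho$, and the resulting action is geometrically closed because it is built from the entire symmetry pseudogroup; verifying that $\nabla$ is a \emph{flat Cartan} connection — compatible with bracket and anchor, with vanishing curvature — is what certifies $(A,\nabla)$ as a twisted Lie algebra action.

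I expect the converse to be the main obstacle. Producing $A$ as an honest finite-rank Lie algebroid rather than an a priori infinite-dimensional sheaf of local vector fields rests squarely on the finite-type content of ``locally homogeneous,'' and pinning down the correct connection — the unique flat Cartan connection whose parallel sections are the infinitesimal symmetries — is the delicate structural step, which I would carry out by transporting the parallelism of the local homogeneous model. On the forward side the genuinely new work is confined to the role of geometric closedness, namely upgrading the abstract transitive generated pseudogroup to the full symmetry pseudogroup of a bona fide locally homogeneous structure.
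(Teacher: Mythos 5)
First, a point of orientation: the present paper does not prove this theorem at all --- it is quoted from the author's earlier work \cite{Blaom_13}, and the paper even declines to define the closedness hypothesis (``not defined here''). So there is no in-paper proof to compare against; your proposal must stand on its own, and it does not, because it rests on incorrect readings of both key terms. The paper defines \emph{locally homogeneous} explicitly: $M$ admits an atlas of coordinate charts modelled on a homogeneous space $G/H$ whose transition functions are right translations by elements of $G$. You instead read it as transitivity of the local symmetry pseudogroup, and the paper explicitly warns these notions differ: ``Locally homogeneous manifolds are less general than transitive pseudogroups generated by a pseudoaction.'' Consequently your forward direction --- integrate by Lie III, observe that surjectivity of the anchor makes the $\Gamma$-orbits open, hence closed, hence all of a connected $M$ --- only establishes the strictly weaker statement that the generated pseudogroup is transitive; it never produces the $G/H$-atlas. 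That construction is exactly where the hypotheses earn their keep: one identifies the isotropy subalgebra $\mathfrak{h} \subset \mathfrak{g}_0$ of the associated twisted Lie algebra action, uses geometric closedness to guarantee that the subgroup $H \subset G_0$ integrating $\mathfrak{h}$ is \emph{closed} --- the paper tells you this is precisely what the hypothesis is for, namely ``to ensure $G/H$ is a Hausdorff manifold'' --- and then builds charts into $G_0/H$ by a developing-map type argument. Your reading of geometric closedness as ``$\Gamma$ is a closed pseudogroup'' is likewise not the intended meaning, and the sentence where you invoke it (``geometric closedness guarantees that $\Gamma$ is the full local symmetry pseudogroup of a genuine finite-type structure\dots'') simply asserts the conclusion of the theorem rather than proving it.

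The converse suffers the mirror-image defect. With the correct definition in hand, the natural argument is to transport the finite-dimensional Lie algebra of fundamental vector fields of the model $G/H$ through the atlas: because the transition functions are (locally constant) elements of $G$ acting on the model, these local Lie algebras of vector fields patch into a flat bundle of Lie algebras over $M$ --- the flatness and the monodromy twist $\mu$ come for free from the locally constant transitions --- yielding $\mathfrak{g}_0 \times_\mu M$ with its flat Cartan connection, transitive because fundamental vector fields span $T(G/H)$, and geometrically closed because $H$ is closed in $G$ by hypothesis. You instead propose to assemble the sheaf of \emph{all} infinitesimal symmetries into a finite-rank bundle, with finite-dimensionality extracted from a ``finite type'' property that the paper's definition of local homogeneity nowhere supplies; as stated, that step has no support, and it is unnecessary once the atlas is actually used.
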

\noindent%
Recall here that $M$ is {\df locally homogeneous} if $M$ admits an
atlas of coordinate charts modelled on a homogeneous space $G/H$,
where the transition functions are right translations by elements of
$G$. The closedness condition (not defined here) is needed to ensure
$G/H$ is a Hausdorff manifold. Locally homogeneous manifolds are less
general than transitive pseudogroups generated by a pseudoaction, but
more general than transitive twisted Lie group actions (manifolds
homogeneous `up to cover', see op.~cit.).

\subsection{Paper outline}
Pseudoactions, and the pseudogroups they define, are described in
\S\ref{section2}, which includes a detailed formulation of the two new
results alluded to above, reformulated as Theorem \ref{lie3} --- what
we refer to as `Lie III for pseudoactions' --- and Theorem
\ref{twisted}, a generalization of Palais' global integrability
theorem to the twisted case. It is an immediate corollary of Theorems
\ref{lie3} and \ref{goofy} that all the pseudogroups on the list in
\S\ref{krd} above can be encoded by pseudoactions. The simplest
non-trivial pseudoactions, twisted Lie group actions, are also
described in \S\ref{section2}. As mentioned above and reviewed in
\S\ref{twot}, their infinitesimal counterparts amount to flat Cartan
connections on a Lie algebroid. We conclude \S\ref{section2} by
revisiting Olver's example of a non-associative local Lie group
\cite{Olver_96} from the pseudoaction point of view.

In \S\ref{av} we prove Lie III for pseudoactions in the untwisted
case. The proof is based on an explicit model for a Lie groupoid
integrating a given action algebroid, whose existence is guaranteed by
a theorem of Dazord \cite{Dazord_97}, but which differs from
Dazord's model (which seems unsuitable for our purposes). However, to
show our model is well-defined, we must appeal to Palais' local
integrability theorem, and our new proof of this theorem depends on
Dazord's existence result.

In order to generalize to the untwisted case, we first formulate and
prove versions of Lie I and Lie II for pseudoactions, which are of
independent interest; these are Theorems \ref{mjy} and \ref{mre}
respectively. Theorem \ref{mre} gives sufficient conditions for
integrating morphisms of twisted Lie algebra actions to morphisms of
pseudoactions, but is restricted to the case of morphisms covering a
local diffeomorphism of base manifolds (sufficient for present
purposes). We conjecture that Theorems \ref{mjy} and \ref{mre} hold
for general morphisms and suggest the use of Salazar's cocycle
techniques \cite{Salazar_13} to extend our proofs.

With Lie I and Lie II in hand, the untwisted version of Lie III
implies the general case, as we show in \S\ref{hes}. Finally, the case
of complete twisted Lie algebra actions is treated in \S\ref{onu}.

\vspace{0.7\baselineskip}%
We acknowledge Marius Crainic, Rui Fernandes, Bas Janssens, Erc\"ument
Orta\c{c}gil, Maria Amelia Salazar and Ori Yudilevich for helpful
discussions.\footnote{Part of the research reported here was carried
  out during the author's visit in 2014 to Utrecht University, whose
  support is gratefully acknowledged.}

\subsection*{Notation} Throughout this paper $G$ denotes a Lie
groupoid, $G_0$ a Lie group, ${\mathfrak g} $ a Lie algebroid, and
${\mathfrak g}_0$ a Lie algebra.

\section{Detailed formulation of main results}\lab{section2}
The reader is assumed to have some familiarity with Lie groupoids and
Lie algebroids --- say what is contained in
\cite{CannasdaSilva_Weinstein_99}. For more detail, see
\cite{Crainic_Fernandes_11,Mackenzie_03}.

\subsection{Pseudotransformations and pseudoactions}\lab{cause}
Let $G$ be a Lie groupoid over a smooth, connected, paracompact
manifold $M$, and call an immersed submanifold $\Sigma\subset G$ a
{\df pseudotransformation} if the restrictions to $\Sigma$ of the
groupoid's source and target maps are local diffeomorphisms.  For
example, any smooth $n$-dimensional submanifold of $M \times M$ that
is locally a graph over {\em both} projections $M \times M \rightarrow
M$ is a pseudotransformation of the pair groupoid $M \times M$. Thus
every local diffeomorphism of $M$ may be regarded as a
pseudotransformation, but so also may its `inverse'.
% For example, every locally-defined diffeomorphism of $M$ may be
% regarded as a pseudotransformation of the pair groupoid $M\times M$;
% so also is any `multi-valued transformation' provided it localizes
% everywhere to a bona fide diffeomorphism of open sets. (Think, for
% example, of the square-root function $z \mapsto z^{1/2}$ on the
% punctured complex plane $M={\mathbb C}\backslash\{0\}$.)

A {\df pseudoaction} on $G$ is any smooth foliation ${\mathcal F}$ on
$G$ such that:
\begin{conditions}
        \item  The leaves of ${\mathcal  F}$ are pseudotransformations.
        \item ${\mathcal  F}$ is  {\df multiplicatively closed}.\lab{potato}
\end{conditions}
To define what is meant in \eqref{potato}, regard local bisections of
$G$ as immersed submanifolds and let $\widehat{\mathcal F}$ denote the
collection of all local bisections that intersect each leaf of
${\mathcal F} $ in an {\em open} subset. Let $ \widehat G$ denote the
collection of {\em all} local bisections of $G$, this being a groupoid
over the collection of all open subsets of $M$.  Then condition
\eqref{potato} is the requirement that $\widehat{\mathcal F}\subset \widehat
G$ be a subgroupoid.

% In the sequel, all  pseudotransformations and pseudoactions are
% understood to be smooth. 

Given a pseudoaction ${\mathcal F}$ of $G$ on $M$, each element $b \in
\widehat{\mathcal F}$ defines a local transformation $\phi_b$ of $M$. We
let $\pseud({\mathcal F})$ denote the set of all local transformations
$\psi \colon U \rightarrow V$ of $M$ that are locally of this form;
that is, for every $m \in U$, there exists a neighbourhood $U' \subset
U$ of $m$ such that $\psi|_{U'}= \phi_b$ for some for some $b \in\widehat
{\mathcal F}$. Then, by \eqref{potato}, $\pseud({\mathcal F})$ is a
pseudogroup.

The basic prototype of a pseudoaction is any smooth action of a Lie
group $G_0$ on $M$: take as Lie groupoid the action groupoid $G:=G_0
\times M $ and as foliation ${\mathcal F}$ the one with leaves
$\{g\}\times M$, $g \in G_0$. In this case $\pseud({\mathcal F})$
consists of all local transformations $\phi \colon U \rightarrow V$ of
$M$ such that restrictions of $\phi $ to connected components of $U$
are of the form $m \mapsto g \cdot m$, for some $g \in G_0$.

% \subsection{Pseudoactions as \'etalifications}\lab{et}
% Tang \cite{Tang_06} describes pseudoactions using the language of
% \'etale groupoids. An {\df \'etalification} of a Lie groupoid $G$ over
% $M$ is a second Lie groupoid $G'$ with the same underlying set, the
% same base $M$, and the same source and target projections as $G$, and
% such that:
% \begin{conditions}
% \item $G'$ is an \'etale Lie groupoid (i.e., source and target maps
%   are local diffeomorphisms); and
% \item The tautological map $G' \rightarrow G $ is a morphism of Lie groupoids.
% \end{conditions}
% Evidently, as an immersed submanifold, the image of $G' \rightarrow
% G$ is the union of the leaves of a regular foliation ${\mathcal F}$ on
% $G$, and this foliation is a pseudoaction in our sense. Conversely,
% every pseudoaction ${\mathcal F}$ of $G$ is generated by some
% \'etalification $G' \rightarrow G$, unique up to isomorphism. In this
% language, $\pseud({\mathcal F})$ is simply the pseudogroup of local
% transformations defined by local bisections of $G'$.

% Note that we make no use of the preceding observations in the present
% article.

\subsection{Pseudoactions as flat Cartan connections}\lab{onepoint}
Let $G$ be a Lie groupoid and $J^1 G$ the corresponding Lie groupoid
of one-jets of local bisections of $G$. Then a right-inverse $S \colon
G \rightarrow J^1 G$ for the canonical projection $J^1 G \rightarrow
G$ determines a rank-$n$ distribution $D \subset TG$ on $G$, where
$n=\dimension M$. If $S \colon G \rightarrow J^1 G$ is additionally a
morphism of Lie groupoids, then we call $D$ (or $S$) a {\df Cartan
  connection} on $G$. We prove the following elementary observation in
\S\ref{terminology}:
\begin{proposition}
  An arbitrary foliation ${\mathcal F}$ on $G$ is a pseudoaction if
  and only if its tangent distribution $D$ is a Cartan connection on
  $G$.
\end{proposition}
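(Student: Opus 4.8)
The plan is to translate each of the two defining conditions of a pseudoaction into a property of the section $S \colon G \rightarrow J^1 G$ determined by $D$, using the dictionary between the integral manifolds of $D$ and the local bisections lying in $\widehat{\mathcal F}$. First I would make this dictionary explicit. Since $\mathcal F$ is a foliation, $D$ is an integrable distribution, and a local bisection $b$ belongs to $\widehat{\mathcal F}$ precisely when $b$ is everywhere tangent to $D$ (that is, $T_g b = D_g$ for all $g \in b$): an $n$-dimensional immersed submanifold meets every leaf in an open set if and only if it is a union of open pieces of leaves, which for an integrable rank-$n$ distribution is the same as tangency to $D$. Thus $\widehat{\mathcal F}$ is exactly the set of local bisections tangent to $D$.

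Next I would unwind the definition of $J^1 G$. A one-jet at $g$ of a local bisection records nothing more than a rank-$n$ subspace $D_g \subset T_g G$ along which both $ds$ and $dt$ restrict to isomorphisms, and conversely every such subspace is realised by a bisection through $g$. Hence a right-inverse $S$ for the projection $J^1 G \rightarrow G$ is the same datum as a rank-$n$ distribution $D$ on $G$ projecting isomorphically under source and target. Under this identification, condition (i)---that every leaf $L$ be a pseudotransformation, i.e.\ that $s|_L$ and $t|_L$ be local diffeomorphisms---is exactly the requirement that each $D_g$ satisfy these transversality conditions, which is to say that $S$ is a well-defined section of $J^1 G \rightarrow G$. (In particular a foliation whose leaves are not $n$-dimensional fails both sides simultaneously.)

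It remains to identify condition (ii) with the statement that $S$ is a morphism of Lie groupoids, and this is the crux. The groupoid multiplication on $J^1 G$ is by construction the one-jet of the product of bisections: if $\xi_i = j^1 b_i$ are composable one-jets, then $\xi_1 \xi_2 = j^1(b_1 b_2)$ at the product point, depending only on the $\xi_i$. Choosing $b_1, b_2 \in \widehat{\mathcal F}$ through $g_1, g_2$, the identity $S(g_1 g_2) = S(g_1) S(g_2)$ becomes $D_{g_1 g_2} = T_{g_1 g_2}(b_1 b_2)$, the assertion that $b_1 b_2$ is again tangent to $D$ at the product point. Ranging over all composable pairs recovers the closure of $\widehat{\mathcal F}$ under products; likewise unitality of $S$ (that $S(1_m)$ be the unit jet, i.e.\ $D_{1_m} = T_m M$) corresponds to the unit bisections lying in $\widehat{\mathcal F}$, and preservation of inverses is then automatic for a groupoid morphism. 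Therefore $S$ is a Lie groupoid morphism if and only if $\widehat{\mathcal F} \subset \widehat G$ is a subgroupoid, which is condition (ii).

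The main obstacle I anticipate is the bookkeeping in this last step: one must check carefully that multiplication in $J^1 G$ really is computed by multiplying bisection jets, so that the pointwise morphism condition may be tested using any local bisections tangent to $D$, and that the pointwise checks on composable pairs assemble correctly into the global subgroupoid condition on $\widehat{\mathcal F}$. Everything else reduces to the Frobenius correspondence between the foliation and its tangent distribution.
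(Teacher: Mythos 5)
Your proof is correct and follows essentially the same route as the paper: both arguments rest on the dictionary that a local bisection lies in $\widehat{\mathcal F}$ exactly when its first jet prolongation coincides with $S$ along its image, together with the fact that multiplication in $J^1 G$ is computed by taking jets of products of bisections, so that multiplicativity of $S$ is equivalent to closure of $\widehat{\mathcal F}$ under products. The only difference is one of emphasis: the paper omits the ``pseudoaction $\Rightarrow$ Cartan'' direction as straightforward and writes out only the converse computation, whereas you treat both directions symmetrically through the same dictionary.
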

\noindent% 

Now let ${\mathcal F}$ be a pseudoaction of $G$ and $S \colon G
\rightarrow J^1 G$ the corresponding Cartan
connection. Infinitesimalizing $S$, we obtain a splitting $s \colon
{\mathfrak g} \rightarrow J^1 {\mathfrak g} $ of the exact sequence
\begin{equation*}
  0\longrightarrow T^*\!M\otimes{\mathfrak g}\longrightarrow J^1{\mathfrak 
    g}\longrightarrow{\mathfrak g}\longrightarrow 0.
\end{equation*}
Here $J^1 {\mathfrak g} $ denotes the Lie algebroid of one-jets of
sections of the Lie algebroid ${\mathfrak g} $ of $G$. (A canonical
identification of the Lie algebra of $J^1 G$ with $J^1 {\mathfrak g} $
is recalled in \S\ref{jet} below.)  In the category of vector bundles,
splittings $s$ of the above sequence are in one-to-one correspondence
with linear connections $\nabla$ on ${\mathfrak g}$; this
correspondence\footnote{According to our sign conventions, the
  inclusion $T^*\!M \otimes {\mathfrak g} \rightarrow J^1 {\mathfrak
    g} $ induces the following map on sections: $df \otimes X \mapsto
  f J^1 X - J^1 (f X)$.} is given by
\begin{equation}
  sX = J^1 X + \nabla X; \qquad  X \in \Gamma({\mathfrak g}). \mathlab{cspan}
\end{equation}
When $s \colon {\mathfrak g}
\rightarrow J^1 {\mathfrak g} $ is a morphism of Lie algebroids, as is
the case here, then $\nabla $ is called an {\df (infinitesimal) Cartan
  connection} on $\mathfrak g$.

Certainly not all Cartan connections on ${\mathfrak g}$ arise from
smooth pseudoactions. Rather, one has the following fundamental
result:%, proven in \S\ref{lie0}:
\begin{theorem}
  Assume $G$ is source-connected. Then a Cartan connection $D$ is
  tangent to a pseudoaction if and only if the corresponding
  infinitesimal Cartan connection $\nabla$ is flat.
\end{theorem}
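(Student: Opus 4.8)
The plan is to reduce the assertion to a Frobenius integrability problem on $G$ and to solve it by exploiting the right-invariance that comes from multiplicativity. First I would reduce ``tangent to a pseudoaction'' to ``integrable as a distribution.'' By the Proposition above, a foliation of $G$ is a pseudoaction precisely when its tangent distribution is a Cartan connection; since $D$ is itself a Cartan connection, any foliation whose tangent distribution is $D$ is automatically a pseudoaction. Thus $D$ is tangent to a pseudoaction if and only if $D$ is integrable. Now each plane $D_g$ is the tangent plane of a one-jet of a local bisection, hence an $n$-plane ($n=\dim M$) transverse to both $\ker d\alpha$ and $\ker d\beta$, where $\alpha,\beta\colon G\to M$ are the source and target. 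In particular $D$ has constant rank, so its integrability is the Frobenius condition, measured by the curvature tensor
\[
\omega\in\Gamma\bigl(\textstyle\bigwedge^{2}D^{*}\otimes(TG/D)\bigr),\qquad
\omega(\xi,\eta)=[\xi,\eta]\bmod D .
\]
It therefore remains to prove that $\omega\equiv 0$ if and only if $\nabla$ is flat.

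Next I would use that $S\colon G\to J^{1}G$ is a morphism of Lie groupoids. This is exactly the multiplicativity of $D$, and it makes $D$ invariant under right translations; consequently $\omega$ is a right-invariant tensor on $G$. Since $G$ is source-connected, such a tensor is determined by its restriction to the unit section $M\subset G$, so $\omega\equiv 0$ if and only if $\omega|_{M}=0$. Moreover, because $S$ carries units to units, $S(1_{m})$ is the identity jet and $D_{1_{m}}=T_{m}M$; hence $(TG/D)|_{M}\cong\mathfrak g$, and $\omega|_{M}$ is a $\mathfrak g$-valued $2$-form on $M$.

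It then remains to identify $\omega|_{M}$ with the curvature of the Cartan connection $\nabla$. For this I would lift vectors of $TM$ to sections $u^{h}$ of $D$ using the horizontal lift determined by $S$, and expand $\omega(u^{h},v^{h})=[u^{h},v^{h}]\bmod D$ along the unit section, reading off the vertical component by means of right-invariant vector fields on $G$. Substituting the infinitesimal relation $sX=J^{1}X+\nabla X$ and using that $s$ is a morphism of Lie algebroids, the terms not involving the curvature cancel, leaving exactly the curvature of $\nabla$ evaluated along $M$. Since flatness of $\nabla$ means precisely the vanishing of this curvature, the two previous steps give $\omega\equiv 0\iff\nabla$ flat, which is the theorem.

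The main obstacle is this final identification: one must carefully track the bracket of the $S$-dependent horizontal lifts and verify that the morphism property of $s$ cancels precisely the non-curvature contributions, so that $\omega|_{M}$ is the curvature of $\nabla$ and nothing more. By contrast, the globalization needed for the ``if'' direction poses no difficulty: once Frobenius produces an integral foliation, its tangent distribution is the Cartan connection $D$, so by the Proposition above it is automatically a pseudoaction.
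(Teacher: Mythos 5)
Your opening reduction is sound: by Proposition \ref{onepoint}, any foliation whose tangent distribution is the Cartan connection $D$ is automatically a pseudoaction, so the statement is indeed equivalent to Frobenius integrability of $D$, measured by $\omega(\xi,\eta)=[\xi,\eta]\bmod D$. The proof collapses, however, at the right-invariance step, and the collapse is fatal rather than cosmetic. In a Lie groupoid, right translations act only along source fibres (or, globally, through right translation by bisections); they do not act on all of $TG$, so there is no sense in which the $(TG/D)$-valued tensor $\omega$ is ``right-invariant, hence determined by its restriction to the unit section.'' The only translations that would carry $D$ to $D$ are right translations by bisections \emph{integrating} $D$, whose existence is precisely the integrability you are trying to prove --- the argument is circular. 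Worse, the conclusion that right-invariance would deliver is demonstrably false: since $S$ is a groupoid morphism it sends each unit $1_m$ to the unit of $J^1G$, i.e.\ to the jet of the unit bisection, so $D|_M=TM$ and the unit section is an integral manifold of \emph{every} Cartan connection. Consequently $\omega|_M\equiv 0$ whether or not $\nabla$ is flat (vector fields tangent to a submanifold along it have bracket tangent to it). If $\omega$ were determined by $\omega|_M$, every Cartan connection on a source-connected groupoid would be integrable, contradicting the remark at the end of \S\ref{onepoint} that a generic Cartan connection is tangent to no foliation at all. Your final identification also fails on type grounds: $\omega|_M$ would be a $2$-form on $M$ with values in $\mathfrak{g}$, whereas the curvature of $\nabla$ takes values in $\operatorname{End}(\mathfrak{g})$.

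For comparison, the paper offers no proof of this theorem: it appeals to Salazar's result on multiplicatively closed distributions \cite[Theorem 6.4.1]{Salazar_13}, noting explicitly that both known proofs require techniques beyond those used elsewhere in the paper. That machinery is exactly what your sketch elides: the correct bridge between $\omega$ and the curvature of $\nabla$ is the correspondence between multiplicative structures on $G$ and their infinitesimal (Spencer-operator) counterparts on $\mathfrak{g}$, a Lie~II--type theorem in which source-connectedness gives injectivity of differentiation; in that correspondence the infinitesimal invariant attached to $\omega$ is a derivative of $\omega$ transverse to the unit section, not its (identically vanishing) restriction. Alternatively, the hard implication (flat $\Rightarrow$ integrable) can be recovered from the paper's own later results without this machinery: pull $D$ back to the source-simply-connected cover $\tilde G$ of $G$; by Theorem \ref{lie3} and Lie~I (Theorem \ref{mjy}) the twisted action $(\mathfrak{g},\nabla)$ is formally integrated by a pseudoaction on $\tilde G$, whose tangent Cartan connection must coincide with the pullback of $D$, since two morphisms $\tilde G\to J^1\tilde G$ with the same derivative agree when $\tilde G$ is source-connected; the leaves of that pseudoaction then map under the covering $\tilde G\to G$ (a surjective local diffeomorphism) to integral manifolds of $D$ through every point.
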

\noindent%
This theorem follows from a result proven recently by Salazar
\cite[Theorem 6.4.1]{Salazar_13} for arbitrary `multiplicatively
closed' distributions on a Lie groupoid, of which the distribution $D$
defined by a Cartan connection is a special case. An independently
obtained proof along different lines will be given elsewhere. Both
proofs involve techniques not needed in the rest of the paper.

In general, an arbitrary Cartan connection on a Lie groupoid $G$
defines a rank-$n$ distribution on $G$ that is not integrated by any
foliation ${\mathcal F} $.\newline

\subsection{Twisted Lie group actions}\lab{tg}
To describe the simplest non-trivial examples of pseudoactions, let
$G_0$ be a Lie group acting on the universal cover $\tilde M$ of
$M$. Assume the action respects the group of covering transformations
$\Lambda \cong \pi_1(M)$ in the following sense: If $G_0$ acts
effectively and is identified with a subgroup of $\diff(M)$, then
require that $\Lambda \subset \diff(M)$ be contained in the normaliser
of $G_0$. More generally, we require the existence of a group
homomorphism $\lambda \mapsto \nu_\lambda \colon \Lambda \rightarrow
\automorphism(G_0)$ such that
\begin{equation}
  \lambda(g \cdot  \tilde m) = \nu_\lambda(g)\cdot \lambda(\tilde m); \qquad
  \lambda \in \Lambda,\, g \in G_0,\, \tilde m \in \tilde M. \mathlab{starry}
\end{equation}
(We say that $\lambda \colon \tilde M \rightarrow \tilde M$ is
$G_0$-equivariant with twist $\nu_\lambda$.) This requirement can
always be satisfied when $G_0$ contains $\Lambda $ as a subgroup,
provided the action of $G_0$ on $\tilde M$ extends the tautological
action of $\Lambda$; in that case take $\nu_\lambda (g)=\lambda g
\lambda^{-1}$. In any case, assuming \eqref{starry} holds, $\Lambda $
acts on the action groupoid $G_0 \times \tilde M$ by Lie groupoid
automorphisms and the quotient $G_0 \times_\nu M := (G_0 \times \tilde
M)/\Lambda$ becomes a Lie groupoid over $M$. Moreover, the canonical
foliation of $G_0 \times M$ by copies of $M$ drops to a pseudoaction
${\mathcal F}$ on $G_0 \times_\nu M $.  Such pseudoactions will be
called {\df twisted Lie group actions}.

It is easy to see $\pseud({\mathcal F})$ is the pseudogroup generated
by all local transformations $\varphi \colon U \rightarrow V$ of $ M$
covered by a local transformation of $\tilde M$ of the form $\tilde m
\rightarrow g \cdot \tilde m$, $g \in G_0$.

\begin{example}
  Take $G_0$ to be the three-dimensional group of isometries of the
  Euclidean plane ${\mathbb R}^2$ to obtain a twisted Lie group action
  on the cylinder $S^1 \times {\mathbb R} \cong {\mathbb R}^2/{\mathbb
    Z}$. The associated pseudogroup of transformations is the
  pseudogroup of local and global isometries of $S^1 \times {\mathbb
    R} $. This pseudogroup includes the isometry {\em group} of $S^1
  \times {\mathbb R} $ (i.e., the global isometries) which is only
  two-dimensional.
\end{example}

\subsection{Twisted Lie algebra actions}\lab{twot}
According to Theorem \ref{onepoint}, the infinitesimalization of a
pseudoaction is a Lie algebroid ${\mathfrak g}$ equipped with a flat
Cartan connection $\nabla$. We call such a pair $({\mathfrak g},
\nabla)$ a {\df twisted Lie algebra action}. To justify this
terminology, let ${\mathfrak g}_0 $ be a finite-dimensional Lie
algebra acting smoothly from the left on the universal cover $\tilde
M$ of $M$, and denote the corresponding Lie algebra homomorphism
${\mathfrak g}_0 \rightarrow \Gamma (T \tilde M)$ by $\xi \mapsto
\xi^\dagger$. We assume this action
respects the group of covering transformations $\Lambda$ in the
following sense: there exists a representation $\mu \mapsto \mu_\lambda
\colon \Lambda \rightarrow \automorphism({\mathfrak g}_0)$ of
$\Lambda$ by Lie algebra automorphisms such that
\begin{equation*}
  \lambda_* \xi^\dagger = (\mu_\lambda \xi)^\dagger; \qquad \lambda \in
  \Lambda,\,\xi \in {\mathfrak g}_0.
\end{equation*}
Here $\lambda_*$ denotes pushforward. (This requirement is just the
infinitesimal form of \eqrefs{tg}{starry}; we say $\lambda \colon
\tilde M \rightarrow \tilde M $ is ${\mathfrak g}_0 $-equivariant with
twist $\mu_\lambda$.) Then the action of $\Lambda $ on the action
algebroid ${\mathfrak g}_0 \times \tilde M$, defined by $\lambda \cdot
(\xi,\tilde m)=(\mu_\lambda \xi,\lambda(\tilde m))$, is by Lie
algebroid automorphisms, and the quotient ${\mathfrak g}_0 \times_\mu
M:=({\mathfrak g}_0 \times \tilde M)/\Lambda$ becomes a Lie algebroid
over $M$. Moreover, the canonical flat connection on ${\mathfrak g}_0
\times \tilde M$, which is Cartan, drops to a flat Cartan connection
$\nabla $ on ${\mathfrak g}_0 \times_\mu M$.

Conversely, we have the following trivial observation, recorded for
later on:
\begin{remark}
  The formal pullback of ${\mathfrak g}_0 \times_\mu M $ to a Lie
  algebroid $\tilde {\mathfrak g} $ over $\tilde M$ is canonically
  isomorphic to ${\mathfrak g}_0 \times \tilde M$, with the canonical
  action of $\Lambda $ on $\tilde {\mathfrak g} $ being represented by
  the action of $\Lambda $ on ${\mathfrak g}_0 \times \tilde M$
  described above.
\end{remark}
\noindent%
The main point, however, is:
\begin{proposition}[\cite{Blaom_13}]
  Every Lie algebroid ${\mathfrak g} $ over $M$, equipped with a flat
  Cartan connection $\nabla $ --- i.e., every twisted Lie algebra
  action --- is naturally isomorphic to ${\mathfrak g}_0 \times_\mu
  M$, for some ${\mathfrak g}_0$ and $\mu$ as above.
\end{proposition}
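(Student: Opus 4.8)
The plan is to pass to the universal cover, trivialize the flat connection by parallel transport, recognize the resulting Lie algebroid as an action algebroid, and let the deck group supply the twist $\mu$.

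First I would form the formal pullback $\tilde{\mathfrak g}$ of $(\mathfrak g,\nabla)$ along the covering $p\colon \tilde M \to M$. Since $p$ is a local diffeomorphism, $\tilde{\mathfrak g}=p^*\mathfrak g$ carries a canonical Lie algebroid structure together with a pulled-back connection $\tilde\nabla$ that is again flat and Cartan; moreover $\Lambda\cong\pi_1(M)$ acts on $\tilde{\mathfrak g}$ by Cartan-connection-preserving Lie algebroid automorphisms, with quotient $\mathfrak g$ (this is the content of the Remark above). Because $\tilde M$ is simply connected and $\tilde\nabla$ is flat, parallel transport is path-independent, so the space $\mathfrak g_0$ of global $\tilde\nabla$-parallel sections has dimension equal to $\operatorname{rank}\mathfrak g$; evaluation at a basepoint then identifies $\tilde{\mathfrak g}\cong\mathfrak g_0\times\tilde M$ as vector bundles, with $\mathfrak g_0$ realized as the constant (parallel) sections and $\tilde\nabla$ becoming the canonical flat connection.

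The crux is to upgrade this to an isomorphism of Lie algebroids, i.e.\ to show $\mathfrak g_0\times\tilde M$ is the action algebroid of a Lie algebra action. Here the Cartan condition --- that $s=J^1+\tilde\nabla$ is a morphism of Lie algebroids --- does the work: if $X,Y$ are parallel then $sX=J^1X$ and $sY=J^1Y$, so
$$ J^1[X,Y]+\tilde\nabla[X,Y] = s[X,Y] = [sX,sY] = [J^1X,J^1Y] = J^1[X,Y], $$
whence $\tilde\nabla[X,Y]=0$ and $[X,Y]$ is again parallel. Thus $\mathfrak g_0$ is closed under the bracket and becomes a finite-dimensional Lie algebra. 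Since the anchor $\#$ of any Lie algebroid intertwines brackets, $\xi\mapsto\xi^\dagger\equiv\#\xi$ is a Lie algebra homomorphism $\mathfrak g_0\to\Gamma(T\tilde M)$, i.e.\ a Lie algebra action on $\tilde M$. Expanding the Leibniz rule for the bracket on $\tilde{\mathfrak g}$ against the constant sections and comparing with the standard bracket of the action algebroid then identifies $\tilde{\mathfrak g}\cong\mathfrak g_0\times\tilde M$ as Lie algebroids, with $\tilde\nabla$ the canonical connection.

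Finally I would transport the $\Lambda$-action through this identification. Each deck automorphism preserves $\tilde\nabla$, hence carries parallel sections to parallel sections and restricts to a Lie algebra automorphism of $\mathfrak g_0$; this defines $\mu\colon\Lambda\to\automorphism(\mathfrak g_0)$, and compatibility of these automorphisms with the anchor is exactly the equivariance $\lambda_*\xi^\dagger=(\mu_\lambda\xi)^\dagger$. Quotienting by $\Lambda$ yields the natural isomorphism $\mathfrak g\cong\mathfrak g_0\times_\mu M$. I expect the only delicate points to be bookkeeping ones: verifying that the formal pullback genuinely carries the stated $\Lambda$-equivariant Cartan structure, and checking that $\mu$ is a homomorphism of the correct variance so that the descent reproduces the construction of $\mathfrak g_0\times_\mu M$. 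The conceptual heart --- closure of the parallel sections under the bracket --- is immediate from the morphism property of $s$, so the flatness hypothesis enters essentially only to guarantee enough global parallel sections on $\tilde M$.
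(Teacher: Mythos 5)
Your proposal is correct and follows essentially the same route as the paper's own sketch: pull back to the universal cover, take the $\tilde\nabla$-parallel sections as ${\mathfrak g}_0$, let the monodromy/deck action supply $\mu$, and descend via $(\xi,\tilde m)\bmod\Lambda\mapsto\pi(\xi(\tilde m))$. The one point you work out that the paper defers to \cite{Blaom_13} --- closure of parallel sections under the bracket via $s[X,Y]=[J^1X,J^1Y]=J^1[X,Y]$ --- is exactly right and is the conceptual heart of why ${\mathfrak g}_0$ is a Lie subalgebra.
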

\noindent%
In detail: Let $\tilde {\mathfrak g}$ denote the pullback of
${\mathfrak g}$ to a Lie algebroid over $\tilde M$, $\tilde \nabla $
the pullback connection, and ${\mathfrak g}_0$ the finite-dimensional
vector space of $\tilde \nabla$-parallel sections of $\tilde
{\mathfrak g}$. Then ${\mathfrak g}_0 \subset \Gamma(\tilde {\mathfrak
  g})$ is a Lie subalgebra acting on $\tilde M$ according to
$\xi^\dagger(\tilde m)=\#\xi (\tilde m)$ where $\#$ denotes anchor;
the monodromy $\mu $ of the flat connection $\nabla $ is a
representation of $\pi_1(M) \cong \Lambda$ on ${\mathfrak g}_0 $ by
Lie algebra automorphisms; and an isomorphism ${\mathfrak g}_0
\times_\mu M \cong {\mathfrak g}$ is given by
\[ (\xi, \tilde m)
\modulo \Lambda \mapsto \pi(\xi(\tilde m)),
\]
where $\pi \colon \tilde {\mathfrak g} \rightarrow {\mathfrak g} $ is
the canonical projection. For further details see \cite{Blaom_13}.

\subsection{Pseudogroups generated by infinitesimal data}\lab{hsu}
Let $({\mathfrak g}, \nabla)$ be a twisted Lie algebra action on $M$
and, for each {\em local} $\nabla $-parallel section $X$ of
${\mathfrak g}$, let $\Phi_{\# X}^t$ denote the corresponding time-$t$
flow map\footnote{We do not restrict $t$ but allow $\Phi_{\# X}^t$ to
  have empty domain, i.e., to be the `empty transformation', whose
  composition with any other is itself.} of the vector field
$\#X$. Here $\#\colon {\mathfrak g} \rightarrow TM$ denotes the anchor
of $\mathfrak g$. Then the collection all local transformations of $M$
of the form $\Phi_{\#X}^t$, for some such $X$ and $t \in {\mathbb R}$,
generates a pseudogroup denoted $\pseud(\nabla)$.  Locally, each
element of $\pseud(\nabla)$ is of the form
\[
\Phi_{\#X_1}^{t_1} \circ \Phi_{\#X_2}^{t_2}\circ\cdots \circ
\Phi_{\#X_k}^{t_k},
\]
for some locally defined sections $X_1,X_2,\ldots,X_k$ of $\mathfrak
g$ and $t_1,t_2,\ldots,t_k \in {\mathbb R}$. In the special case that
$({\mathfrak g}, \nabla )$ is an action algebroid, this is the usual
pseudogroup of transformations generated by flows of the infinitesimal
generators of the action.

\subsection{Integrability and formal integrability}\lab{lie3}
A pseudoaction ${\mathcal F} $ on a Lie groupoid $G$ over $M$ will be
said to {\df integrate} a twisted Lie algebra action $({\mathfrak
  g},\nabla)$ on $M$ if $\pseud({\mathcal F})=\pseud(\nabla)$.  We
will say that ${\mathcal F} $ {\df formally integrates} $({\mathfrak
  g},\nabla)$ if $G$ integrates ${\mathfrak g}$ --- i.e., ${\mathfrak
  g}$ is the Lie algebroid of $G$ --- and if $\nabla $ is the
infinitesimalization of the Cartan connection $D$ tangent to
${\mathcal F}$, as we have described in \S\ref{onepoint}. Note that an
integration ${\mathcal F}$ need not be a formal integration.

Here is our main result, proven in \S\ref{hes}:
\begin{theorem}[Lie III for pseudoactions]
  For every twisted Lie algebra action $({\mathfrak g}, \nabla)$ there
  exists a pseudoaction ${\mathcal F} $ that simultaneously integrates
  and formally integrates $({\mathfrak g}, \nabla)$. Moreover, any
  pseudoaction ${\mathcal F}$ on a Lie groupoid $G$ formally
  integrating $({\mathfrak g},\nabla)$ is a bona fide integration if
  $G$ has connected source-fibres.
\end{theorem}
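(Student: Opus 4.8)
The plan is to prove the two assertions of the theorem separately. The first assertion—existence of a pseudoaction that both integrates and formally integrates a given twisted Lie algebra action—is the substantive part, and according to the paper outline this is exactly what the machinery of Lie I, II, and III (Theorems \ref{mjy}, \ref{mre}, and the untwisted Lie III) is built to deliver. So I would treat the existence claim as a consequence of the development promised in \S\ref{av}–\S\ref{hes}: first construct a formal integration (a Lie groupoid $G$ with Lie algebroid $\mathfrak g$, carrying a Cartan connection $D$ whose infinitesimalization is $\nabla$), then upgrade "formal integration" to genuine integration. The second assertion—that \emph{any} formal integration with connected source-fibres is automatically a genuine integration—is the part I would prove directly and self-containedly here, since it is a clean statement about why $\pseud(\mathcal F)=\pseud(\nabla)$ follows from source-connectedness alone.

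For the second assertion, the key is to compare the two pseudogroups leaf-by-leaf and locally. Suppose $\mathcal F$ formally integrates $(\mathfrak g,\nabla)$, so the tangent distribution $D$ of $\mathcal F$ is a Cartan connection infinitesimalizing to $\nabla$. First I would show $\pseud(\nabla)\subset\pseud(\mathcal F)$: a local $\nabla$-parallel section $X$ of $\mathfrak g$ corresponds, under the identification from \S\ref{onepoint}, to a leaf direction of $\mathcal F$, and the time-$t$ flow $\Phi_{\#X}^t$ of the anchored vector field $\#X$ is realized by sliding along a leaf of $\mathcal F$; concretely, the $D$-horizontal lift of the flow of $\#X$ stays in a single leaf because $\nabla X=0$ means the jet $sX$ has no torsion-correction term, so the flow is a family of local bisections in $\widehat{\mathcal F}$, hence $\Phi_{\#X}^t\in\pseud(\mathcal F)$. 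The reverse inclusion $\pseud(\mathcal F)\subset\pseud(\nabla)$ is where source-connectedness enters: an element of $\pseud(\mathcal F)$ is locally $\phi_b$ for a bisection $b\in\widehat{\mathcal F}$, and $b$ lands in a single leaf $L$ of $\mathcal F$; I would use that $L$ is a pseudotransformation (source and target restrict to local diffeomorphisms) and that $L$ is connected to the identity section $M\subset G$ within the source-fibres to express $\phi_b$ as a composition of short flow-like moves, each of which is an $\Phi_{\#X_i}^{t_i}$ for parallel $X_i$, thereby landing in $\pseud(\nabla)$.

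The main obstacle I expect is precisely this last reverse inclusion—turning an arbitrary bisection in a leaf into a finite composition of time-$t$ flows of parallel sections. A single leaf of $\mathcal F$ need not be a flow line of any one vector field; it is an $n$-dimensional integral manifold of $D$, and a bisection $b$ in it corresponds to a path in the leaf from the unit to the point $b(m)$ over each $m$. The natural idea is to cover such a path by coordinate cubes in which $D$ is spanned by commuting (or at least flowing) parallel sections, and then realize the bisection as a concatenation of the corresponding flows; the delicacy is that the parallel sections are only \emph{locally} defined (monodromy $\mu$ obstructs global ones, which is the whole point of the word "twisted"), so the decomposition must be carried out on the universal cover or in small enough patches where parallel frames exist, then patched using that the leaf is connected. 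Source-connectedness guarantees we can reach any point of a given source-fibre by a path, which is what allows the patch-by-patch argument to terminate in a \emph{finite} composition over any compact piece. I would organize this as a lemma: over a simply-connected coordinate neighbourhood the leaf is spanned by local parallel sections and any in-leaf bisection there is a finite composition of their flows; then source-connectedness plus a compactness/connectedness argument on paths promotes the local statement to the global equality $\pseud(\mathcal F)=\pseud(\nabla)$.
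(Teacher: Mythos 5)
Your plan inverts the paper's architecture. The paper first proves $\pseud({\mathcal F})=\pseud(\nabla)$ \emph{directly} for one explicitly constructed pseudoaction (the untwisted case in \S\ref{av}, via the chain-model groupoid and the morphism $\Omega\colon G\to G_0$ whose fibres cut out the leaves, then descent by the covering group $\Lambda$ in \S\ref{hes}), and only afterwards deduces the second assertion from the first, by using Lie II for pseudoactions (Theorem \ref{mre}) to produce a covering of pseudoactions from the constructed integration onto any given formal integration, and invoking Proposition \ref{mjy}. You propose the reverse: prove the second assertion self-containedly, then obtain the first by ``constructing a formal integration and upgrading it''. The problem is that the existence half is then left entirely unproven, and it is not routine: integrating $\mathfrak g$ to a source-simply-connected $G$ and integrating the splitting $s$ by Lie II for groupoids yields only a multiplicative \emph{distribution} $D$ on $G$; passing from flatness of $\nabla$ to integrability of $D$ --- i.e., to an honest foliation --- is precisely Theorem \ref{onepoint}, which the paper attributes to Salazar and deliberately declines to prove, routing around it with the explicit construction of \S\ref{av}--\S\ref{hes}. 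So the bulk of the theorem's content, existence, is missing from your proposal.

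The direct proof you sketch for the second assertion is a genuinely different route from the paper's, and it can be made to work, but not by the mechanisms you describe. First, a $\nabla$-parallel section $X$ is \emph{not} a ``leaf direction'': with the conventions of \S\ref{jet}, its right-invariant extension $X^{\mathrm{R}}$ is tangent to the source-fibres, hence transverse to the leaves, and the $D$-horizontal lift of the flow of $\#X$ through a unit stays in the leaf of that unit, which contains the unit section --- so it realizes the \emph{identity} transformation, not $\Phi^t_{\#X}$. The correct mechanism is vertical: since $dS=s$ (the formal integration hypothesis) and $sX=J^1X$ when $\nabla X=0$, the bisections $m\mapsto \Phi_{X^{\mathrm{R}}}^t(1_m)$ obtained by flowing the unit section are tangent to $D$ and realize $\Phi^t_{\#X}$; establishing this requires a flow/jet computation of the kind carried out in \S\ref{tuy}, not a remark. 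Second, for the reverse inclusion, a leaf generally contains no unit, so ``a path in the leaf from the unit to $b(m)$'' does not exist, and your local lemma fails away from the unit section, where plaques are right-translates of flow-plaques rather than flow-plaques themselves. The missing engine is multiplicativity, which your sketch never invokes: the set $G'$ of arrows through which some integrating bisection has underlying transformation in $\pseud(\nabla)$ is a \emph{subgroupoid} of $G$ (products of integrating bisections integrate, and $\pseud(\nabla)$ is composition-closed); the parallel-frame flow chart shows $G'$ contains a neighbourhood $Z$ of the unit section; then $Z\cdot g$ is a source-fibre neighbourhood of any $g\in G'$, so $G'$ is open and closed in each source-fibre, and connectedness of source-fibres gives $G'=G$, whence $\pseud({\mathcal F})\subset\pseud(\nabla)$ by uniqueness of integrating bisections (Proposition \ref{terminology}). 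Without the verticality of parallel sections, the subgroupoid $G'$, and the right-translation step, your ``cover the path by coordinate cubes'' argument does not go through.
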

\noindent% 
Furthermore, by Lie I for pseudoactions (Theorem \ref{mjy}),
${\mathcal F} $ is {\em uniquely} determined up to isomorphism, if we
insist that $G$ have simply-connected source fibres.

\subsection{Complete twisted Lie algebra actions}\lab{twisted}
In \S\ref{onu} we prove the following generalization of Palais'
global integrability theorem quoted in \S\ref{hec}, which strengthens the conclusion of the preceding
theorem under an additional hypothesis:
\begin{theorem}
  Every {\em\bf complete} twisted Lie algebra action $({\mathfrak g},
  \nabla)$ on $M$ is formally integrated (and hence integrated) by a
  twisted Lie group action.
\end{theorem}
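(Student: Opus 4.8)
The plan is to reduce the assertion to Palais' global integrability theorem on the universal cover $\tilde M$, and then to descend the resulting global group action through the covering twist. First I would invoke the structure theorem for twisted Lie algebra actions (\S\ref{twot}, \cite{Blaom_13}): after the natural isomorphism supplied there we may assume outright that $({\mathfrak g},\nabla)={\mathfrak g}_0\times_\mu M$, so that the pullback $\tilde{\mathfrak g}={\mathfrak g}_0\times\tilde M$ is an honest action algebroid, with ${\mathfrak g}_0$ acting on $\tilde M$ via $\xi\mapsto\xi^\dagger$ and with $\mu\colon\Lambda\rightarrow\automorphism({\mathfrak g}_0)$ the monodromy of $\nabla$. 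By the very meaning of completeness (\S\ref{twisted}) the vector fields $\xi^\dagger$ are complete on $\tilde M$, so Palais' global integrability theorem (\S\ref{hec}) applies to the ${\mathfrak g}_0$-action on $\tilde M$ and yields a \emph{global} action of the simply-connected Lie group $G_0$ integrating ${\mathfrak g}_0$, characterised by $\exp(t\xi)\cdot\tilde m=\Phi_{\xi^\dagger}^t(\tilde m)$.

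Next I would promote the infinitesimal twist $\mu$ to a group-level twist. Since $G_0$ is simply-connected, Lie II for groups integrates each $\mu_\lambda\in\automorphism({\mathfrak g}_0)$ to a unique $\nu_\lambda\in\automorphism(G_0)$, and uniqueness forces $\lambda\mapsto\nu_\lambda$ to be a group homomorphism $\Lambda\rightarrow\automorphism(G_0)$ satisfying $\nu_\lambda\circ\exp=\exp\circ\,\mu_\lambda$. I then have to verify the global equivariance \eqrefs{tg}{starry}; this is the step I expect to be the main obstacle, as it is the one genuinely tying together $\mu$, its integration $\nu$, and the two global actions. The argument is a uniqueness-of-integration argument: the infinitesimal equivariance $\lambda_*\xi^\dagger=(\mu_\lambda\xi)^\dagger$ says precisely that $\lambda$ conjugates the flow of $\xi^\dagger$ into that of $(\mu_\lambda\xi)^\dagger$, whence
\[
\lambda\bigl(\exp(t\xi)\cdot\tilde m\bigr)=\exp(t\,\mu_\lambda\xi)\cdot\lambda(\tilde m)=\nu_\lambda\bigl(\exp(t\xi)\bigr)\cdot\lambda(\tilde m).
\]
Since $G_0$ is connected it is generated by such exponentials, and \eqref{starry} follows for all $g\in G_0$ by writing $g$ as a finite product and iterating.

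With $(G_0,\nu)$ so produced and \eqref{starry} in hand, the construction of \S\ref{tg} applies verbatim: $\Lambda$ acts on $G_0\times\tilde M$ by Lie groupoid automorphisms, the quotient $G_0\times_\nu M=(G_0\times\tilde M)/\Lambda$ is a Lie groupoid over $M$, and the canonical foliation drops to a twisted Lie group action ${\mathcal F}$. It remains to check that ${\mathcal F}$ formally integrates $({\mathfrak g},\nabla)$. This is immediate by construction: the Lie algebroid of $G_0\times\tilde M$ is ${\mathfrak g}_0\times\tilde M$ and the $\Lambda$-actions correspond under infinitesimalisation, so the Lie algebroid of $G_0\times_\nu M$ is ${\mathfrak g}_0\times_\mu M\cong{\mathfrak g}$, while the infinitesimalisation of the Cartan connection tangent to ${\mathcal F}$ is the drop of the canonical flat connection, namely $\nabla$. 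Finally, the source fibres of $G_0\times_\nu M$ are copies of the connected group $G_0$, so the second clause of Theorem \ref{lie3} (Lie III for pseudoactions) upgrades this formal integration to a bona fide integration, completing the proof.
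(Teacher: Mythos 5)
Your proof is correct and shares the paper's skeleton --- reduce to the universal cover via Proposition \ref{twot}, apply Palais' global integrability theorem to the complete ${\mathfrak g}_0$-action on $\tilde M$, integrate the monodromy $\mu$ to $\nu \colon \Lambda \rightarrow \automorphism(G_0)$ by Lie II for Lie groups, and conclude with $G_0 \times_\nu M$ --- but you implement the descent to $M$ differently. The paper never checks the equivariance \eqref{starry} by hand: it re-enters the construction of \S\ref{hes}, in which $({\mathfrak g},\nabla)$ is formally integrated by the drop of a pseudoaction on any source-simply-connected $\tilde G$ integrating $\tilde{\mathfrak g}$, the $\Lambda$-action on $\tilde G$ being supplied by Lie II for pseudoactions (Theorem \ref{mre}); taking $\tilde G = G_0 \times \tilde M$, the lifted action is identified with $\lambda \cdot (g, \tilde m) = (\nu_\lambda(g), \lambda(\tilde m))$ by uniqueness of integrated morphisms --- so \eqref{starry} emerges as a by-product --- and the dropped pseudoaction is identified with the canonical one via the uniqueness clause of Lie I for pseudoactions (Theorem \ref{mjy}). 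You instead prove \eqref{starry} directly: infinitesimal equivariance conjugates the flows, the Palais action restricted to one-parameter subgroups is given by those flows, and connectedness of $G_0$ together with the homomorphism property of $\nu$ lets you iterate over finite products of exponentials; the elementary quotient construction of \S\ref{tg} then applies verbatim, with formal integration checked by differentiating the construction. Your route is more self-contained --- at this stage it needs neither Lie II for Lie groupoids nor Lie I for pseudoactions, and it makes explicit an equivariance the paper leaves implicit --- while the paper's route is shorter given the machinery already in place and identifies the answer directly with the output of the general Lie III construction, so formal integration requires no separate verification. Both arguments settle the parenthetical ``(and hence integrated)'' the same way: your observation that the source fibres of $G_0 \times_\nu M$ are copies of the connected group $G_0$, combined with the second clause of Theorem \ref{lie3}, is exactly the intended justification.
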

\noindent%
In detail: Let $(\tilde {\mathfrak g}, \tilde \nabla $) denote the
pullback of $({\mathfrak g}, \nabla)$ to the universal cover $\tilde
M$, and ${\mathfrak g}_0 \subset \Lambda(\tilde {\mathfrak g})$ the
Lie subalgebra of $\tilde \nabla $-parallel sections. Let $\mu \colon
\Lambda \rightarrow \automorphism({\mathfrak g}_0)$ denote the monodromy
representation of the flat connection $\nabla $. Then this
representation is by Lie algebra automorphisms and, by Lie II for Lie
groups, lifts to a Lie group homomorphism $\nu \colon \Lambda
\rightarrow \automorphism(G_0)$, where $G_0$ is the simply-connected
Lie group integrating ${\mathfrak g}_0$. The twisted Lie algebra
action $({\mathfrak g},\nabla)$ is integrated by the twisted Lie group
action $G_0 \times_\nu M$.

By {\df complete} we mean that $\nabla$ should have complete
geodesics. A {\df geodesic} of $\nabla $ is a smooth ${\mathfrak
  g}$-path $t \mapsto X_t \in {\mathfrak g}$ such that
$\nabla_{\#X_t}X_t=0$, where $\# \colon {\mathfrak g} \rightarrow TM$
denotes the anchor. A {\df ${\mathfrak g} $-path} is a path $t \mapsto
X_t \in {\mathfrak g}$ such that $\#X_t=\dot m_t$, where $m_t \in M$
is the base point of $X_t$ and a dot denotes derivative.

Compactness of $M$ is not sufficient for completeness of $\nabla$
unless $M$ is simply-connected. A counterexample, and some sufficient
conditions for completeness, are offered in \cite{Blaom_13}.

\subsection{Finite type Lie pseudogroups}\lab{goofy}
Let $J^k(M\times M)$ denote the Lie groupoid of $k$-jets of
diffeomorphisms $\varphi \colon U \rightarrow V$ from an open set $U
\subset M$ to an open set $V \subset M$. Let $\Gamma$ denote a
pseudogroup of infinitely differentiable transformations on
$M$. Prolonging, one obtains a tower of surjective maps between
groupoids,
\[
\Gamma^0 \leftarrow \Gamma^1 \leftarrow
\Gamma^2 \leftarrow \cdots.
\]
Here $\Gamma^i := \{ J^i_m \varphi\suchthat \varphi \in \Gamma,\, m
\in \domain(\varphi)\}$ and $\Gamma^0$ is the foliation by orbits of the
pseudogroup, understood as a subgroupoid of $M \times M$. We will call
$\Gamma$ a {\df Lie pseudogroup of finite type $k$} if the
subgroupoids $\Gamma^k \subset J^k(M \times M)$ and $\Gamma^{k+1}
\subset J^{k+1}(M \times M)$ are {\em Lie} subgroupoids, and if the
surjection $\Gamma^{k+1} \rightarrow \Gamma^{k}$ is a diffeomorphism.

Finite type Lie pseudogroups commonly arise as the symmetries of
sufficiently regular finite type geometric structures.

\begin{theorem}
  For every Lie pseudogroup $\Gamma $ of finite type $k$, there exists
  a canonical pseudoaction ${\mathcal F} $ on $\Gamma^k$ such that
  $\Gamma = \pseud({\mathcal F})$. 
\end{theorem}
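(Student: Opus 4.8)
The plan is to realise $\mathcal F$ as the foliation of $\Gamma^k$ whose leaf through $J^k_m\varphi$ is the $k$-jet prolongation
\[
L_\varphi \equiv \{\,J^k_{m'}\varphi \mid m' \in \domain(\varphi)\,\}, \qquad \varphi \in \Gamma,
\]
extracting it from a canonical Cartan connection on $\Gamma^k$ furnished by the finite-type hypothesis. First I would use the finite-type diffeomorphism $\Gamma^{k+1} \to \Gamma^k$ to form its inverse $s \colon \Gamma^k \to \Gamma^{k+1}$; as the jet projection $\Gamma^{k+1} \to \Gamma^k$ is a morphism of Lie groupoids and a diffeomorphism, $s$ is itself a groupoid morphism. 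Next I would invoke the tautological prolongation map $\iota \colon J^{k+1}(M\times M) \to J^1\bigl(J^k(M\times M)\bigr)$ sending $J^{k+1}_m\varphi$ to the $1$-jet at $m$ of the local bisection $m' \mapsto J^k_{m'}\varphi$. Since $\varphi \in \Gamma$ forces $J^k_{m'}\varphi \in \Gamma^k$, this bisection lies in $\Gamma^k$, so $\iota \circ s$ restricts to a map $S \colon \Gamma^k \to J^1(\Gamma^k)$. By construction $S$ is a right-inverse of the projection $J^1(\Gamma^k) \to \Gamma^k$, and it is a morphism of Lie groupoids because $s$ and $\iota$ both are; hence $S$ is a Cartan connection on $\Gamma^k$ in the sense of \S\ref{onepoint}.

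By the characterisation of pseudoactions as Cartan connections (\S\ref{onepoint}), it remains to show that the rank-$n$ distribution $D \subset T\Gamma^k$ determined by $S$ is tangent to a foliation. The finite-type hypothesis does the essential work here twice. Because $s$ is a genuine diffeomorphism, $D_\sigma$ depends only on $\sigma \in \Gamma^k$ and not on the choice of $\varphi$ representing it, so $D$ is a well-defined smooth distribution. Moreover, for each $\varphi \in \Gamma$ the prolongation $L_\varphi$ is an immersed $n$-dimensional submanifold satisfying $T_\sigma L_\varphi = D_\sigma$ at every $\sigma = J^k_m\varphi$, since $T_\sigma L_\varphi$ is exactly the horizontal space encoded by $J^1_m(m' \mapsto J^k_{m'}\varphi) = \iota(s(\sigma)) = S(\sigma)$. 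As every point of $\Gamma^k$ has the form $J^k_m\varphi$ for some $\varphi \in \Gamma$, the distribution $D$ admits a maximal-rank integral submanifold through each of its points; hence $D$ is involutive and, by Frobenius, tangent to a smooth foliation $\mathcal F$ whose leaves coincide locally with the $L_\varphi$. (Alternatively, integrability follows from Theorem \ref{onepoint} once one checks that the infinitesimalisation of $S$ is flat.)

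It is then routine to confirm that $\mathcal F$ is a pseudoaction and that it generates $\Gamma$. Each leaf $L_\varphi$ is a pseudotransformation, as the source and target maps of $\Gamma^k$ restrict on it to the local diffeomorphisms $J^k_m\varphi \mapsto m$ and $J^k_m\varphi \mapsto \varphi(m)$; and multiplicative closedness follows either from $S$ being a groupoid morphism or directly from the closure of $\Gamma$ under composition and inversion, the bisection $m' \mapsto J^k_{m'}(\varphi\circ\psi)$ being the product of those attached to $\varphi$ and $\psi$. Regarding $L_\varphi$ as a local bisection $b_\varphi \in \widehat{\mathcal F}$, the induced transformation $\phi_{b_\varphi}$ sends $m \mapsto \varphi(m)$, whence $\Gamma \subseteq \pseud(\mathcal F)$; conversely any $b \in \widehat{\mathcal F}$ meets each leaf in an open set and so agrees near each point with some $L_\varphi$, giving $\phi_b = \varphi$ locally, and since $\Gamma$ is a pseudogroup (closed under restriction and gluing) this forces $\pseud(\mathcal F) \subseteq \Gamma$.

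I expect the main obstacle to be the second step: verifying that $D$ is a well-defined smooth distribution and, relatedly, that $\iota \circ s$ genuinely lands in $J^1(\Gamma^k)$ rather than merely in $J^1\bigl(J^k(M\times M)\bigr)$. Both points rest squarely on the finite-type diffeomorphism and on the compatibility of prolongation with the Lie-subgroupoid structure of $\Gamma^{k+1}$. Once smoothness and well-definedness are secured, integrability costs essentially nothing, because the integral submanifolds $L_\varphi$ are supplied directly by the elements of $\Gamma$, so no separate flatness computation is strictly required; the remaining pseudotransformation, multiplicative-closure, and pseudogroup-identity checks are then formal.
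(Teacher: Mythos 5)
Your proposal is correct and follows essentially the same route as the paper: you build the Cartan connection $S\colon \Gamma^k \to \Gamma^{k+1}\subset J^1\Gamma^k$ as the inverse of the finite-type diffeomorphism, integrate the resulting distribution by the prolonged bisections $m\mapsto J^k_m\varphi$, invoke Proposition \ref{onepoint} to get a pseudoaction, and then prove the two inclusions $\Gamma\subseteq\pseud(\mathcal F)$ and $\pseud(\mathcal F)\subseteq\Gamma$ via local uniqueness of integrating bisections plus the collating property of pseudogroups. The only cosmetic difference is your explicit factorization of $S$ through the tautological map $\iota\colon J^{k+1}(M\times M)\to J^1(J^k(M\times M))$, which the paper treats as an identification.
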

\noindent%
Applying   Theorem
\ref{lie3} we obtain:
\begin{corollary}
  If the Lie groupoid $\Gamma^k$ is source-connected, then the Lie
  pseudogroup $\Gamma $ defining it is generated by infinitesimal data
  --- i.e., by the flows determined by a flat Cartan connection $\nabla $
  on the Lie algebroid of $\Gamma^k$, as described in \S\ref{hsu}.
\end{corollary}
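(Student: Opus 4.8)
The plan is to deduce this Corollary directly from Theorem \ref{lie3}, applied to the pseudoaction already produced by Theorem \ref{goofy}. I would begin by recording the canonical pseudoaction $\mathcal{F}$ on $\Gamma^k$ supplied by that theorem, which satisfies $\Gamma = \pseud(\mathcal{F})$. Differentiating $\mathcal{F}$ as in \S\ref{onepoint}, its tangent distribution is a Cartan connection $D$ on $\Gamma^k$ whose infinitesimalization is an infinitesimal Cartan connection $\nabla$ on the Lie algebroid $\mathfrak{g}$ of $\Gamma^k$. Since $\mathcal{F}$ is a bona fide foliation, $D$ is integrable, so Theorem \ref{onepoint} guarantees that $\nabla$ is flat; hence $(\mathfrak{g}, \nabla)$ is a twisted Lie algebra action in the sense of \S\ref{twot}, defined on the very Lie algebroid named in the statement.

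I would then observe that $\mathcal{F}$ \emph{formally} integrates $(\mathfrak{g}, \nabla)$ in the sense of \S\ref{lie3}, and that this costs nothing: the groupoid $\Gamma^k$ integrates $\mathfrak{g}$ tautologically, since $\mathfrak{g}$ is by definition its Lie algebroid, while $\nabla$ was constructed precisely as the infinitesimalization of the Cartan connection $D$ tangent to $\mathcal{F}$. Both clauses of the definition of formal integration are therefore met verbatim, and no further verification is required before invoking Lie III.

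It is here that the source-connectedness hypothesis does its work. The second assertion of Theorem \ref{lie3} promotes any formal integration on a Lie groupoid $G$ to a genuine integration as soon as $G$ has connected source fibres. Taking $G = \Gamma^k$, which is source-connected by hypothesis, I conclude that $\pseud(\mathcal{F}) = \pseud(\nabla)$. Combining this with $\Gamma = \pseud(\mathcal{F})$ yields $\Gamma = \pseud(\nabla)$, which is exactly the claim that $\Gamma$ is generated by the flows of the $\nabla$-parallel sections described in \S\ref{hsu}.

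The argument carries no serious obstacle, as the analytic substance is entirely absorbed into Theorems \ref{onepoint} and \ref{lie3}; the Corollary amounts to checking that the flat Cartan connection $\nabla$ extracted from $\mathcal{F}$ satisfies the formal-integration hypotheses of Theorem \ref{lie3} on the nose. The only point deserving explicit mention is that the source-connectedness of $\Gamma^k$ is what converts the automatic formal integration into the genuine identity $\Gamma = \pseud(\nabla)$; without it one obtains merely $\pseud(\nabla) \subset \pseud(\mathcal{F}) = \Gamma$, and the reverse inclusion may fail.
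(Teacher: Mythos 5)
Your proposal is correct and is exactly the paper's own route: the paper derives the corollary simply by ``Applying Theorem \ref{lie3}'' to the canonical pseudoaction ${\mathcal F}$ of Theorem \ref{goofy}, and your argument supplies precisely the implicit steps --- flatness of $\nabla$ via Theorem \ref{onepoint}, the tautological verification that ${\mathcal F}$ formally integrates $({\mathfrak g},\nabla)$, and the use of source-connectedness through the second assertion of Theorem \ref{lie3} to get $\Gamma=\pseud({\mathcal F})=\pseud(\nabla)$. Your closing observation that without source-connectedness only the inclusion $\pseud(\nabla)\subset\pseud({\mathcal F})$ survives is a correct and apt aside.
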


The construction of ${\mathcal F} $ is as follows: Regarding
$\Gamma^{k+1}\subset J^{k+1}(M \times M)$ as a subgroupoid of
$J^1(J^k(M \times M))$, we in fact have $\Gamma^{k+1} \subset J^1
\Gamma^k$. The smooth inverse $S \colon \Gamma^k \rightarrow
\Gamma^{k+1} \subset J^1 \Gamma^k$ of $\Gamma^{k+1}\rightarrow
\Gamma^{k}$ (well-defined by $S(J^k_m \varphi)=J^{k+1}_m \varphi$)
becomes a right-inverse for the natural projection
$J^1\Gamma^k\rightarrow \Gamma^k$ and is necessarily a groupoid
morphism. In other words, $S$ is a Cartan connection on $\Gamma^k$.

The corresponding distribution $D$ on $\Gamma^k$ is
integrable. Indeed, each point in $\Gamma^k$ is of the form
$g=J_{m_0}^k\varphi$ for some $\varphi \in \Gamma$ and $m_0 \in M$; a
local bisection of $\Gamma^k$ whose image integrates $D$ and contains
$g$ is given by $m \mapsto J^k_m\varphi$. The integrating foliation
${\mathcal F}$ is a pseudoaction, by Proposition \ref{onepoint}.

The proof that $\Gamma = \pseud({\mathcal F})$
is postponed to \S\ref{lp}

\vspace{0.75\baselineskip}%
While every Lie pseudogroup of finite type is generated by a
pseudoaction, not all pseudogroups generated by a pseudoaction are Lie
pseudogroups. For example, the pseudogroup of local flows of a vector
field on ${\mathbb R} $, vanishing at zero to all orders of
differentiability, but vanishing at no other point, is not a Lie
pseudogroup of finite type. However, Theorem \ref{lie3} guarantees
that every pseudogroup generated by the flow of a vector field is
generated by a pseudoaction: take ${\mathfrak g}$ to be the action
algebroid ${\mathbb R} \times M$.

\subsection{Non-associative local Lie groups}
We now describe what is perhaps the simplest non-trivial example of a
pseudoaction on a simply-connected domain (where there can be no
monodromy twist). The example is interesting because the underlying
Lie algebra is commutative but the corresponding pseudogroup is not.

Let ${\mathbb R}^2\backslash 0$ denote the punctured plane and $\pi
\colon M \rightarrow {\mathbb R}^2\backslash 0 $ its universal
covering. The coordinate vector fields $\partial / \partial x$ and
$\partial/ \partial y$ on ${\mathbb R}^2 $ pull back to commuting
vector fields $v_1, v_2$ on $M$ which are consequently the
infinitesimal generators of an action of the Lie algebra ${\mathbb
  R}^2 $ on $M$. This action is evidently not complete. If $\pi(m) =
(-1/\sqrt{2},-1/\sqrt{2})$ then $\exp(v_1)(\exp(v_2) m)\ne
\exp(v_2)(\exp(v_1) m)$, which shows the pseudogroup generated by this
Lie algebra action is not commutative. Consequently, while $M$ is
locally isomorphic to the the Abelian group ${\mathbb R}^2 $, there
can be no way to extend a local isomorphism to a global one. Olver
\cite{Olver_96} shows that one can give $M$ the structure of a local
Lie group, but multiplication in this structure is not associative.

The action algebroid in this example is isomorphic to $TM$ which is the
Lie algebroid of the pair groupoid $M \times M$. Now
\begin{equation*}
  \Omega(m_1,m_2) = \pi(m_2) - \pi(m_1)
\end{equation*}
defines a Lie groupoid morphism $\Omega \colon M \times M \rightarrow
{\mathbb R}^2 $ into the Abelian Lie group ${\mathbb R}^2 $ and the
derivative of $\Omega $ is evidently the Lie algebroid morphism
$\omega \colon TM \rightarrow {\mathbb R}^2$ satisfying
$\omega(v_1(m))=(1,0)$ and $\omega (v_2(m))=(0,1)$. It is easy to see
that the connected components of fibres of $\Omega \colon M \times M
\rightarrow {\mathbb R}^2 $ must be the leaves of a pseudoaction
${\mathcal F} $ on $M \times M$ and that this pseudoaction formally
integrates --- and hence integrates --- the Lie algebra action.

The lack of globalizability noted above is reflected in interesting
topology in the corresponding pseudotransformations (the leaves of
${\mathcal F} $). To describe this topology, first note that the
target map $\alpha (m_1,m_2)=m_1$ maps $\Omega^{-1}(u)$ onto
$M_u:=M\backslash\pi^{-1}(-u)$, $u \in {\mathbb R}^2$. If $u=0$ then
$M_u=M\cong {\mathbb R}^2 $; if $u \ne 0$ then $M_u$ is a plane with a
countably infinite number of discrete punctures.

Now identify the group of covering transformations of $\pi \colon M
\rightarrow {\mathbb R}^2\backslash 0$ with ${\mathbb Z} $ and let
this group act on $M \times M$ by acting on the second factor. Then
${\mathbb Z} $ acts freely and transitively on the fibres of the
restriction $\alpha \colon \Omega^{-1}(u) \rightarrow M_u$, so that
$\Omega^{-1}(u)$ is a principal ${\mathbb Z} $-bundle over $M_u$. If
$u=0$ this bundle is trivial, $\Omega^{-1}(0) \cong {\mathbb Z} \times
M$, and the connected components of $\Omega^{-1}(0)$ may be viewed as
bona fide global transformations of $M$, namely the covering
transformations of $\pi \colon M \rightarrow {\mathbb R}^2 \backslash
0$. However, for $u \ne 0$ the principal ${\mathbb Z}$-bundle
$\Omega^{-1}(u) $ is non-trivial (one can show the associated \^Cech
cohomology class $c \in H^1(M_u, \underline{{\mathbb Z}})$ is
non-trivial). It is, however, connected, and hence a leaf of
${\mathcal F} $. From these observations it follows that the leaf
space of ${\mathcal F} $ is the plane with a countable infinity of
origins.

\vspace{0.75\baselineskip} The example above was pointed out to us by
a referee who queries the extent to which our results depend upon
unwitting assumptions of associativity. The only associativity used in
this paper is associativity of multiplication in a (global) Lie group
and associativity of map composition (pseudogroups are always
associative).

\section{Elementary considerations}
\subsection{Elements of $J^1 G$ and its Lie algebroid}\lab{jet}
Let $J^1 G$ denote the Lie groupoid of one-jets of local bisections of
a Lie groupoid $G$. Given an arrow $g \in G$ from $m $ to $m'$, we
will view an element of $J^1_gG$ as a linear map $\mu \colon {T_mM}
\rightarrow T_gG$ such that $T_m \alpha \circ \mu =
\identity_{{T_mM}}$ and $T_m \beta \circ \mu\colon T_mM \rightarrow
T_{m'}M$ is invertible. Here $\alpha $ and $\beta $ denote the source
and target maps respectively.

If ${\mathfrak g}$ is the Lie algebroid of $G$ then by convention
elements of ${\mathfrak g} $ will be regarded as vectors tangent at
some $m \in M \subset G$ to a fibre of the source projection $\alpha
\colon G \rightarrow M$. Let $J^1 {\mathfrak g} $ denote the vector
bundle of one-jets of sections of ${\mathfrak g} $. Implicit in
\S\ref{onepoint} is an identification of $J^1 {\mathfrak g} $ with the
(abstract) Lie algebroid of $J^1 G$. This identification is given by
\begin{equation*}
  J^1_m X \mapsto \frac{d}{dt}T_m(\Phi_{X^\mathrm{R}}^t\circ
  \iota_M)\Big|_{t=0}; \qquad X \in \Gamma({\mathfrak g}),\, m \in M.
\end{equation*}
Here $X^\mathrm{R}$ denotes the right-invariant vector field on $G$
corresponding to $X$, $\Phi_{X^\mathrm{R}}^t$ its time-$t$ flow map,
and $\iota_M \colon M \rightarrow G$ the inclusion.

\subsection{Local bisections integrating a foliation}\lab{terminology}
It will be convenient henceforth to view a local bisection of a Lie
groupoid $G$ as a right inverse $b \colon U \rightarrow G$ for the
target projection $\beta \colon G \rightarrow M$. We say $b$ {\df
  integrates} a foliation ${\mathcal F}$ on $G$ if the image of any
connected subset of $U$ lies within a single leaf of ${\mathcal
  F}$. Assuming the leaves of ${\mathcal F}$ are
pseudotransformations, the set $\widehat {\mathcal F} $ defined in
\S\ref{cause} then consists of all local bisections of $G$ integrating
${\mathcal F} $, as it is not hard to show. We record the following
elementary observations, whose proofs are left to the reader:
\begin{proposition}Let ${\mathcal F}$ be a foliation on $G$ whose
  leaves are pseudotransformations. Then:
  \begin{conditions}
  \item\lab{u1} Any local bisection $b \colon U \rightarrow G$
    integrates ${\mathcal F}$ if and only if the image of the tangent
    map $Tb \colon TU \rightarrow TG$ is contained in the distribution
    $D \subset TG$ tangent to ${\mathcal F}$.
  \item\lab{uniqueness} Any two local bisections integrating
    ${\mathcal F}$, and agreeing at a single point $m \in M$ in their
    common domain $U$, agree on all of $U^m$, where $U^m$ is the
    connected component of $U$ containing $m$.
  \item If ${\mathcal F} $ is a pseudoaction, then every element of
    $\pseud({\mathcal F})$ is, locally, of the form
    $U\xrightarrow{b}G\xrightarrow{\beta}M$, for some local bisection
    $b$ integrating ${\mathcal F}$.
  \end{conditions}
\end{proposition}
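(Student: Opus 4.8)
The plan is to dispatch the three claims in order, the first two carrying the real content and the third being a formal consequence of the definitions.

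For \eqref{u1} I would argue both implications pointwise. Since $b\colon U\to G$ is a section of the submersion $\beta$, it is an embedding, and its image is an $n$-dimensional embedded submanifold of $G$. If $b$ integrates $\mathcal F$, then for each $m\in U$ the image $b(U^m)$ of the connected component lies in a single leaf $L$; differentiating an arbitrary curve in $U^m$ through $m$ and applying $Tb$ yields a vector tangent to $L$, hence lying in $D_{b(m)}$, and as these velocities exhaust $T_mU$ we obtain $Tb(T_mU)\subseteq D_{b(m)}$. Conversely, if the image of $Tb$ lies in $D$, then for any connected $N\subseteq U$ the image $b(N)$ is a connected submanifold everywhere tangent to $D$, i.e.\ an integral manifold of $\mathcal F$; the local normal form for foliations then forces a connected integral manifold into a single leaf, so $b$ integrates $\mathcal F$.

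For \eqref{uniqueness} I would run a connectedness argument on the agreement locus $A\equiv\{x\in U^m : b_1(x)=b_2(x)\}$, which is nonempty by hypothesis and closed by continuity; it remains to show $A$ is open. Fix $x_0\in A$ and set $g\equiv b_1(x_0)=b_2(x_0)$, with $L$ the leaf through $g$. Since $x_0$ and $m$ lie in the same component $U^m$, part \eqref{u1} shows that both $b_1$ and $b_2$ map $U^m$ into $L$. As the leaves are pseudotransformations, $\beta|_L$ is a local diffeomorphism near $g$; but each $b_i$ is a section of $\beta$ taking values in $L$, so near $x_0$ each must equal the unique local inverse of $\beta|_L$ sending $x_0$ to $g$. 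Hence $b_1=b_2$ on a neighbourhood of $x_0$, so $A$ is open, and therefore $A=U^m$.

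For the third claim I would simply unwind the definitions. As recalled just before the Proposition, once the leaves are pseudotransformations the set $\widehat{\mathcal F}$ coincides with the collection of local bisections integrating $\mathcal F$; and by the construction of $\pseud(\mathcal F)$ in \S\ref{cause}, each of its elements agrees, on a neighbourhood of every point of its domain, with the local transformation determined by some $b\in\widehat{\mathcal F}$ --- that is, with a map of the form $U\xrightarrow{b}G\xrightarrow{\beta}M$. Substituting the identification of $\widehat{\mathcal F}$ gives precisely the asserted local form.

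I expect the main obstacle to be the two places where an infinitesimal tangency must be upgraded to a global leaf statement: the reverse implication of \eqref{u1}, where one passes from the containment of $Tb$ in $D$ to containment of a connected integral manifold in a single leaf, and the openness step of \eqref{uniqueness}, where one uses the pseudotransformation hypothesis in its sharp form --- that $\beta$ restricts to a local diffeomorphism on each leaf --- to conclude that an integrating bisection is locally pinned down by its leaf together with a single value. Once these are in place the remaining steps are routine.
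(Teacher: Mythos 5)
There is no printed proof to compare against: the paper records this proposition among ``elementary observations, whose proofs are left to the reader.'' Your proposal supplies the missing argument, and it is essentially correct and surely the intended one: \eqref{u1} follows from the two standard foliation facts you invoke (velocities of curves lying in a leaf are tangent to it; a connected manifold everywhere tangent to $D$ lies in a single leaf), \eqref{uniqueness} from a connectedness argument, and the third item by unwinding the definition of $\pseud({\mathcal F})$ together with the identification of $\widehat{\mathcal F}$ with the set of integrating bisections, which the paper asserts just before the proposition.

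Two steps should be tightened. In the openness step of \eqref{uniqueness}, ``near $g$'' must mean near in the \emph{leaf topology} of $L$: continuity of $b_i$ into $G$ does not by itself prevent $b_i(x)$, for $x$ close to $x_0$, from lying on a sheet of $L$ that merely accumulates at $g$, and such a point is equally a preimage of $x$ under the projection restricted to $L$. Repair this either by invoking that leaves of a foliation are initial (weakly embedded) submanifolds, so that each $b_i$ is continuous as a map into $L$, or more elementarily by working in a single foliation chart at $g$: by \eqref{u1}, each $b_i$ maps a small connected neighbourhood of $x_0$ into the plaque through $g$, the projection is injective on that plaque after shrinking the chart, and two sections of an injective map valued in the same plaque coincide. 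Second, closedness of the agreement locus presupposes that $G$ is Hausdorff (the diagonal of $G\times G$ must be closed). This is the natural standing convention, but it is not part of every definition of a Lie groupoid, and it is genuinely needed: the non-Hausdorff \'etale groupoid over ${\mathbb R}$ modelled on the line with two origins (trivial isotropy away from $0$, isotropy ${\mathbb Z}/2$ at $0$), foliated by its single leaf, is a pseudoaction carrying two integrating bisections that agree off the origin but differ there.

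A final, cosmetic point: following \S\ref{terminology} you call $b$ a section of the target map $\beta$, but the third item --- and every later application of this proposition in the paper --- requires $b$ to be a section of the source map $\alpha$, with $\beta\circ b$ the induced transformation; under the convention as literally stated, $\beta\circ b$ would be the identity. This is a slip in the paper itself, not in your argument, which is indifferent to the choice since leaves of a pseudoaction are \'etale over both projections.
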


\subsection{The proof of Proposition \ref{onepoint}}
Let ${\mathcal F}$ be an arbitrary foliation on a Lie groupoid $G$ and
$D \subset TG$ its tangent distribution. We omit the straightforward
proof that $D$ being Cartan is necessary for ${\mathcal F} $ to be a
pseudoaction, and prove only sufficiency.

Let $S \colon G \rightarrow J^1 G$ denote the Lie groupoid morphism
corresponding to the distribution $D$ tangent to ${\mathcal F} $,
which we suppose is a Cartan connection. So, if $g \in G$ is an arrow
in $G$ beginning at $m \in M$, then $S(g)v \in D(g)$ for all $v \in
T_mM$.  From the definition of a Cartan connection, it is not hard to
see the leaves of ${\mathcal F} $ must be pseudotransformations. To
show ${\mathcal F} $ is multiplicatively closed, let $b_1 \colon U_1
\rightarrow G$ and $b_2 \colon U_2 \rightarrow G $ be two local
bisections integrating ${\mathcal F}$, and suppose they are
composable, i.e., $U_1=\beta(b_2(U_2))$; we need to show that the
product $b_1 b_2 \colon U_2 \rightarrow G$ also integrates ${\mathcal
  F}$.
    
Let $m \in U_2$ be arbitrary. Then, using the definition of products in
$J^1 G$, it is not hard to show that
\begin{equation*}
   T(b_1 b_2) \cdot v= J^1_m(b_1 b_2)v=(J^1_{m'}b_1\,J^1_m b_2)v; \qquad v \in T_m M,\, m':=\beta(b_2(m)).
\end{equation*}
On the other hand, since $b_1$ and $b_2$ integrate ${\mathcal F} $, it
follows from \eqref{u1} above that $J^1_{m'}b_1=S(b_1(m'))$ and $J^1_m
b_2=S(b_1(m))$. Substituting into the above, and using the fact that
$S$ is a Lie groupoid morphism, we conclude 
\[
T(b_1 b_2) \cdot
v=S(b_1(m')b_2(m))v \in D.
\]
Since $m$ and $v$ are arbitrary, the image of $T(b_1 b_2) \colon TU_2
\rightarrow TG$ is contained in $D$.  By \eqref{u1} above, $b_1 b_2$
integrates ${\mathcal F}$, so that $\widehat {\mathcal F} \subset \widehat G$
is closed under multiplication.

The proof that $\widehat {\mathcal F} \subset \widehat {\mathcal G} $ is
closed under inversion is similar and omitted.

\subsection{Proof of Theorem  \ref{goofy}}\lab{lp}
The construction of a pseudoaction ${\mathcal F} $ on $\Gamma^k$, for
any Lie pseudogroup $\Gamma $ of finite type $k$, was given in
\S\ref{goofy}; it remains to show that $\Gamma = \pseud({\mathcal F})
$.  It is easy to see that $\Gamma \subset \pseud({\mathcal F})$. For
the reverse inclusion, suppose $\phi \in \pseud({\mathcal F})$. To
show $\phi \in \Gamma $ it suffices, by the collating property of
pseudogroups, to construct, for any $m_0$ in the domain $U $ of
$\phi$, an open neighbourhood $V$ of $m_0$ such that $\phi|_V \in
\Gamma$.

Since $\phi \in \pseud({\mathcal F})$ we have, shrinking $U \ni m_0$
if necessary, $\phi = \beta \circ b$, for some local bisection $b
\colon U \rightarrow \Gamma^k$ integrating ${\mathcal F}$. Here $\beta
\colon \Gamma^k \rightarrow M$ denotes the target map of
$\Gamma^k$. Furthermore, we have $b(m_0)=J^k_{m_0} \varphi$ for some
$\varphi \in \Gamma$, with domain $U'\ni m_0$, say. But in that case
we obtain a second local bisection $b' \colon U' \rightarrow \Gamma^k$
integrating ${\mathcal F}$, defined by $b'(m)=J^k_m \varphi$, and
satisfying $b'(m_0)=b(m_0)$. By the local uniqueness of integrating
bisections (Proposition \eqrefs{terminology}{uniqueness}) $b$ and $b'$
coincide on some open neighbourhood $V$ of $m_0$, giving us
$\phi|_V=\beta \circ b'|_V=\varphi|_V \in \pseud(\Gamma)$.

\section{Integrating a Lie algebra action}\lab{av}
In this central section of the paper we show that every Lie algebra
action is integrated by a pseudoaction ${\mathcal F}$ which is also a
formal integration. We will construct our integration with the help of
Palais' local integrability theorem, of which we offer a novel
proof. While in principle the reader may take Palais' result as given
--- and read the remainder of the section independently of our proof
of it --- we feel our proof puts the later constructions into better
context.

Let ${\mathfrak g}_0 $ be a finite-dimensional Lie algebra acting
smoothly from the left on $M$, and denote the corresponding Lie
algebra homomorphism ${\mathfrak g}_0 \rightarrow \Gamma(TM)$ by $\xi
\mapsto \xi^\dagger$. The simply-connected Lie group with Lie algebra
${\mathfrak g}_0 $ will be denoted by $G_0$.

\subsection{Local actions}\lab{loc}%
By a {\df local action} of $G_0$ on $M$ we shall mean an open
neighbourhood $W \subset G_0 \times M$ of the identity section
$\identity \times M$, together with a smooth map $(g,m) \mapsto \phi_g
(m)\colon W \rightarrow M$ such that:
\begin{conditions}
  \item\lab{ip1} $\phi_\identity(m) = m $ for all $m \in M$.
  \item\lab{ip3}  $\phi_h(\phi_g(m)) = \phi_{hg}(m)$ whenever both sides are
  well-defined.
\end{conditions}

% \begin{remark}
%   If only \eqref{ip2} does not hold then one obtains a local action by
%   replacing $W$ with $W \cap W^{-1}$, where $W^{-1}=\{(g^{-1}, g \cdot
%   m)\suchthat (g,m) \in W\}$.
% \end{remark}
\noindent% 
Without loss of generality, we can suppose any local action
additionally satisfies:
\begin{conditions}
% \item\lab{ip5} $(\exp(\xi),m ) \in W \Rightarrow (\exp(t \xi), m) \in
%   W$ for all $t \in [0,1]$.
\item\lab{ip4} $(g, m) \in W \Leftrightarrow (g^{-1}, \phi_g(m)) \in
  W$.
\end{conditions}
For if not, we can replace $W$ with $W \cap W^{-1}$, where $W^{-1}=\{(g^{-1},
\phi_g(m)\suchthat (g,m) \in W\}$.  
% Define a a `ball of radius $r$' by $B_r:=\exp(rW)\subset B$, where
% $0<r\le 1$. Then for each $m \in M$ there exists $r_m>0$ and a
% neighbourhood $U_m$ of $m$ in $M$ such that $B_{r_m} \times U_m
% \subset W$.

\begin{theorem}[Palais' local integrability theorem \cite{Palais_57a}]
  There exists a local action $(m, g) \mapsto \phi_g$ of $G_0$ on $M$
  such that $\phi_g$ is the restriction to some open set of the
  time-one flow map of some infinitesimal generator $\xi^\dagger$,
  $\xi \in {\mathfrak g}_0$.
\end{theorem}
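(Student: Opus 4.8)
The plan is to build the local action inside an integrating Lie groupoid, so that the associativity axiom \eqref{ip3} is inherited from the (given) associativity of groupoid multiplication. First I would form the action algebroid $\mathfrak g = \mathfrak g_0 \times M$, whose anchor is $(\xi,m)\mapsto \xi^\dagger(m)$ and whose bracket restricts on the constant sections $\xi \in \mathfrak g_0$ to the bracket of $\mathfrak g_0$. By Dazord's theorem \cite{Dazord_97} this algebroid is integrable; let $G$ be an integrating Lie groupoid. For each $\xi \in \mathfrak g_0$, viewed as a constant section of $\mathfrak g$, the right-invariant vector field $\xi^{\mathrm R}$ on $G$ has a local flow $\Phi^t_{\xi^{\mathrm R}}$, and (as in \S\ref{jet}) the map $b^t_\xi := \Phi^t_{\xi^{\mathrm R}}\circ \iota_M$ is a local bisection of $G$. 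Since $\xi^{\mathrm R}$ is right-invariant, its flow is left translation by this bisection, so $\beta\circ b^t_\xi$ is exactly the time-$t$ flow map of the anchor image $\#\xi = \xi^\dagger$ on $M$.

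Next I would exploit that composition of such flows corresponds to multiplication of bisections: $\Phi^t_{\xi^{\mathrm R}}\circ\Phi^s_{\eta^{\mathrm R}}$ is left translation by the product bisection $b^t_\xi\, b^s_\eta$, and consequently $\beta\circ(b^t_\xi\,b^s_\eta) = (\beta\circ b^t_\xi)\circ(\beta\circ b^s_\eta)$. Thus the families $t\mapsto b^t_\xi$ are one-parameter subgroups of the bisection group $\widehat G$ whose infinitesimal generators span the finite-dimensional subalgebra $\mathfrak g_0 \subset \Gamma(\mathfrak g)$. Because $G_0$ is simply-connected, I would integrate this inclusion to a local homomorphism $g\mapsto b_g$ from a neighbourhood of the identity in $G_0$ into the local bisections of $G$, characterised by $b_{\exp\xi} = b^1_\xi$. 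Setting $\phi_g := \beta\circ b_g$ on a suitable open $W\subset G_0\times M$ then yields a map satisfying \eqref{ip1} and \eqref{ip3} for which $\phi_{\exp\xi}$ is a restriction of the time-one flow of $\xi^\dagger$, as required; shrinking $W$ arranges smoothness and, if desired, \eqref{ip4}.

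The main obstacle I anticipate is the integration step of the second paragraph: passing from the one-parameter subgroups $t\mapsto b^t_\xi$ to a single-valued local homomorphism $g\mapsto b_g$ must avoid a circular appeal to Palais' theorem itself, since a priori this is precisely the statement that the finite-dimensional Lie algebra of vector fields $\{\xi^{\mathrm R}\}$ integrates to a local $G_0$-action on $G$. The resolution I would pursue is that, because the flows are left translations by bisections, the only relations that must be checked among the products $b^{t_1}_{\xi_1}\cdots b^{t_k}_{\xi_k}$ are those already forced in the simply-connected group $G_0$ by the Lie algebra $\mathfrak g_0$; these descend to genuine identities among bisections precisely because the groupoid product is associative and $\xi\mapsto \xi^{\mathrm R}$ is a Lie algebra homomorphism. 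A secondary technical point is that the local bisections $b^t_\xi$ have $t$- and $\xi$-dependent domains, so some care is needed to select a single neighbourhood $W$ of the identity section on which $\phi$ is defined and smooth; the local uniqueness of integrating bisections controls overlaps here.
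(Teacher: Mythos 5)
Your first two paragraphs essentially reproduce the paper's setup (Dazord integration, flows of the right-invariant vector fields $\xi_{\mathrm c}^{\mathrm R}$, products of the bisections $b^t_\xi$ encoding composition of flows, cf.\ Lemma \ref{loc}), but the step you yourself flag as the main obstacle is where the proof genuinely breaks, and your proposed resolution does not work. The claim that relations holding in $G_0$ ``descend to genuine identities among bisections precisely because the groupoid product is associative and $\xi\mapsto\xi^{\mathrm R}$ is a Lie algebra homomorphism'' is false as a general principle: such relations hold only on a sufficiently small neighbourhood of the identity section, \emph{not} wherever the bisection products happen to be defined, and producing such a neighbourhood is the entire analytic content of the theorem. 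The paper's own example of Olver makes this concrete. There $\mathfrak g_0={\mathbb R}^2$ is abelian, acting on the universal cover $M$ of the punctured plane via the commuting fields $v_1,v_2$; the action algebroid is $TM$, integrated by the pair groupoid $M\times M$, which is even source-simply-connected. The relation $\exp(v_1)\exp(v_2)=\exp(v_2)\exp(v_1)$ holds in $G_0$, yet at the point $m$ with $\pi(m)=(-1/\sqrt2,-1/\sqrt2)$ one has $\exp(v_1)(\exp(v_2)m)\ne\exp(v_2)(\exp(v_1)m)$, i.e.\ the bisection products $b^1_{v_1}b^1_{v_2}$ and $b^1_{v_2}b^1_{v_1}$ are both defined at $m$ but unequal there. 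So no formal argument from associativity and the homomorphism property of $\xi\mapsto\xi^{\mathrm R}$ can deliver a single-valued local homomorphism $g\mapsto b_g$ with $b_hb_g=b_{hg}$; any such argument would prove too much.

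What is missing is the mechanism the paper uses to transfer the group law of $G_0$ to the bisection products, and it requires ingredients absent from your outline. First, $G$ must be taken source-simply-connected (you never assume this), so that Lie II for Lie groupoids produces a morphism $\Omega\colon G\to G_0$ integrating the projection $\mathfrak g_0\times M\to\mathfrak g_0$; this $\Omega$ satisfies $\Omega(\Phi^t_{\xi_{\mathrm c}^{\mathrm R}}(p))=\exp(t\xi)\,\Omega(p)$, equation \eqref{identity}. Second, one shows the map $E(\exp\xi,m)=\Phi^1_{\xi_{\mathrm c}^{\mathrm R}}(m)$ is a diffeomorphism from a neighbourhood $W_{\mathrm{big}}$ of the identity section onto a neighbourhood $Z_{\mathrm{big}}$ of $M$ in $G$, with inverse $\Omega\times\alpha$; in particular $\Omega\times\alpha$ is injective on $Z_{\mathrm{big}}$ (it is \emph{not} injective globally, as the Olver example shows). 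Third, one needs the localization result of Appendix \ref{gpmu}, producing $Z\subset Z_{\mathrm{big}}$ with $ZZ\subset Z_{\mathrm{big}}$. Only then does the key identity \eqref{trt}, $E(h,\phi_g(m))E(g,m)=E(hg,m)$, follow: both sides lie in $Z_{\mathrm{big}}$ and have the same image under the injective map $\Omega\times\alpha$, because $\Omega$ is a groupoid morphism. This also undercuts your secondary appeal to ``local uniqueness of integrating bisections'': that uniqueness (Proposition \ref{terminology}) presupposes a foliation for the bisections to integrate, and here the relevant foliation --- by connected components of fibres of $\Omega$ --- is itself only available once $\Omega$ has been constructed. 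Finally, note that in the paper's construction smoothness of $(g,m)\mapsto\phi_g(m)=\beta(E(g,m))$ is automatic, whereas in yours it would need a separate argument.
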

Before turning to the proof, we record a simple lemma that will be
needed both in the proof and later on.
\begin{lemma}
  Let $G$ be a Lie groupoid whose Lie algebroid is the action
  algebroid ${\mathfrak g}_0 \times M$. For each $\xi \in {\mathfrak
    g}_0$ let $\xi_\mathrm{c}$ denote the constant section of
  ${\mathfrak g}_0 \times M$ and $\xi_\mathrm{c}^\mathrm{R}$ the
  corresponding right-invariant vector field on $G$. Then the flow of
  the vector fields $\xi^\dagger$ and $\xi_\mathrm{c}^\mathrm{R}$ are
  related by
  \begin{equation}
    \beta(\Phi^t_{\xi_\mathrm{c}^\mathrm{R}}(p))=\Phi^t_{\xi^\dagger}(\beta(p));\qquad p \in G,\mathlab{bsj1}
  \end{equation} whenever both sides are defined. Here $\beta $
  denotes the target projection of $G$. Also, if
  $\Phi^t_{\xi^\dagger}(\beta(p))$ is defined for some $t \in {\mathbb
    R} $, then so is $\Phi^t_{\xi_\mathrm{c}^\mathrm{R}}(p)$,  i.e.,
  \begin{equation} \domain \Phi^t_{\xi_\mathrm{c}^\mathrm{R}}=
    \beta^{-1}\left(\domain \Phi^t_{\xi ^\dagger}\right).\mathlab{bsj2}
  \end{equation} 
  Finally, if $\Omega \colon G \rightarrow G_0$ is a Lie groupoid
  morphism whose derivative is the canonical projection ${\mathfrak
    g}_0 \times M \rightarrow {\mathfrak g}_0$, then 
  \begin{equation} 
    \Omega(\Phi^t_{\xi_\mathrm{c}^{\mathrm
        R}}(p))=\exp(t \xi)\Omega(p); \qquad \xi \in {\mathfrak
      g}_0,\enspace p \in G.\mathlab{identity}
  \end{equation}
\end{lemma}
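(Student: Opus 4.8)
The plan is to deduce all three assertions from a single structural fact: the right-invariant vector field $\xi_{\mathrm{c}}^{\mathrm{R}}$ is $\beta$-related to the infinitesimal generator $\xi^\dagger$. This is immediate from the conventions of \S\ref{jet}. Right-invariant vector fields are tangent to the $\alpha$-fibres, and since $\beta \circ R_g = \beta$ wherever the right translation $R_g$ is defined, applying $T\beta$ to $\xi_{\mathrm{c}}^{\mathrm{R}}(g) = TR_g\bigl(\xi_{\mathrm{c}}(\beta(g))\bigr)$ yields the anchor $\#\xi_{\mathrm{c}}(\beta(g)) = \xi^\dagger(\beta(g))$, because $\#\xi_{\mathrm{c}} = \xi^\dagger$ for the action algebroid. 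Granting this, \eqref{bsj1} follows at once from naturality of flows under $\beta$-related vector fields: the curve $t \mapsto \beta(\Phi^t_{\xi_{\mathrm{c}}^{\mathrm{R}}}(p))$ is an integral curve of $\xi^\dagger$ issuing from $\beta(p)$, hence coincides with $\Phi^t_{\xi^\dagger}(\beta(p))$ by uniqueness, on the domain where the left-hand side is defined.

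The substantive point is the domain equality \eqref{bsj2}, which I expect to be the main obstacle, being a completeness (lifting) statement rather than a formal identity. Here I would exploit right-invariance to reduce to a single source fibre. Because the flow of $\xi_{\mathrm{c}}^{\mathrm{R}}$ commutes with all right translations, one has $\Phi^t_{\xi_{\mathrm{c}}^{\mathrm{R}}}(p) = \Phi^t_{\xi_{\mathrm{c}}^{\mathrm{R}}}(\iota_M(\beta(p)))\, p$; in particular $\Phi^t_{\xi_{\mathrm{c}}^{\mathrm{R}}}(p)$ is defined precisely when $\Phi^t_{\xi_{\mathrm{c}}^{\mathrm{R}}}(\iota_M(\beta(p)))$ is, so the maximal existence time from $p$ depends only on $\beta(p) \in M$. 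Fixing $p$ with $m' = \beta(p)$ and assuming $\Phi^t_{\xi^\dagger}(m')$ is defined on $[0,T]$, I would run a continuation argument. Let $T^\ast \le T$ be the supremum of times for which $\Phi^t_{\xi_{\mathrm{c}}^{\mathrm{R}}}(\iota_M(m'))$ exists, and suppose $T^\ast < T$. Smooth dependence of the flow on initial conditions, together with the source-fibre reduction, produces an $\epsilon > 0$ such that $\Phi^s_{\xi_{\mathrm{c}}^{\mathrm{R}}}(\iota_M(m''))$ exists for $|s| < \epsilon$ whenever $m''$ is near $q := \Phi^{T^\ast}_{\xi^\dagger}(m')$. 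Choosing $t < T^\ast$ with $t > T^\ast - \epsilon$, the point $g = \Phi^t_{\xi_{\mathrm{c}}^{\mathrm{R}}}(\iota_M(m'))$ exists with $\beta(g) = \Phi^t_{\xi^\dagger}(m')$ near $q$ by \eqref{bsj1}; right-invariance then gives $\Phi^s_{\xi_{\mathrm{c}}^{\mathrm{R}}}(g) = \Phi^s_{\xi_{\mathrm{c}}^{\mathrm{R}}}(\iota_M(\beta(g)))\, g$ for $|s| < \epsilon$, so the flow from $\iota_M(m')$ extends beyond $T^\ast$, a contradiction. Hence $T^\ast = T$, which is \eqref{bsj2}. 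The only care needed is to make the local existence time genuinely uniform in the base point near $q$, which is exactly what the source-fibre reduction supplies.

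Finally, \eqref{identity} is again a relatedness argument, now for the morphism $\Omega$. Since $\Omega$ is a groupoid morphism whose derivative sends the constant section $\xi_{\mathrm{c}}$ to $\xi \in {\mathfrak g}_0$, it carries $\xi_{\mathrm{c}}^{\mathrm{R}}$ to the right-invariant vector field $\xi^{\mathrm{R}}$ on $G_0$ determined by $\xi$; that is, $\xi_{\mathrm{c}}^{\mathrm{R}}$ is $\Omega$-related to $\xi^{\mathrm{R}}$. This is the only nontrivial input, and it holds because morphisms of Lie groupoids intertwine right translations, and hence right-invariant vector fields, the value along identities being pinned down by the derivative condition. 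Naturality of flows then gives $\Omega(\Phi^t_{\xi_{\mathrm{c}}^{\mathrm{R}}}(p)) = \Phi^t_{\xi^{\mathrm{R}}}(\Omega(p))$, and since the flow of a right-invariant vector field on the Lie group $G_0$ is left translation by the corresponding one-parameter subgroup, $\Phi^t_{\xi^{\mathrm{R}}}(h) = \exp(t\xi)\,h$, we obtain \eqref{identity}.
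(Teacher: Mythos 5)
Your proposal is correct, and for the two formal identities \eqref{bsj1} and \eqref{identity} it coincides with the paper's proof: both rest on the observations that $\xi_\mathrm{c}^\mathrm{R}$ is $\beta$-related to $\xi^\dagger$ and $\Omega$-related to $\xi^\mathrm{R}$, plus naturality of flows under related maps. Where you genuinely diverge is on the domain statement \eqref{bsj2}, which is indeed the substantive part. The paper proves it by lifting the ${\mathfrak g}$-path $t \mapsto (\xi, m(t))$ over an integral curve of $\xi^\dagger$ to a $G$-path, invoking Lie groupoid structure theory: base paths of ${\mathfrak g}$-paths lie in orbits, and $\beta$ restricted to a source-fibre is a principal bundle over the orbit, so paths downstairs lift (a citation to Crainic--Fernandes). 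You instead exploit right-invariance to show the escape time of $\xi_\mathrm{c}^\mathrm{R}$ from $p$ depends only on $\beta(p)$, then run a standard continuation argument using the uniform local existence time near $\iota_M(q)$. Your route is more elementary and self-contained --- it needs only the ODE existence theorem and the identity $\Phi^s_{\xi_\mathrm{c}^\mathrm{R}}(g)=\Phi^s_{\xi_\mathrm{c}^\mathrm{R}}(\iota_M(\beta(g)))\,g$, no orbit/principal-bundle machinery --- at the cost of being longer than the paper's one-sentence appeal to cited structure theory. One small refinement to your write-up: concluding ``$T^\ast = T$'' is not quite enough, since a priori the supremum need not be attained; but the very same argument, run at $T^\ast$ itself (where $\Phi^{T^\ast}_{\xi^\dagger}(m')$ is still defined because $T^\ast \le T$), shows the flow extends \emph{past} $T^\ast$, which both yields the contradiction and gives existence at time $T$.
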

\begin{proof}
  Equation \eqref{bsj1} is an immediate consequence of the fact that
  $\xi_\mathrm{c}^\mathrm{R}$ and $\xi^\dagger$ are $\beta
  $-related. To see that \eqref{bsj2} holds note that integral paths
  $t \mapsto m(t)$ of $\xi^\dagger=\# \xi_\mathrm{c}$ lift to
  ${\mathfrak g}$-paths $t \mapsto (\xi,m(t))$ (${\mathfrak
    g}={\mathfrak g}_0 \times M$), which in turn integrate to families
  of $G$-paths (paths lying in source-fibres).\footnote{To see that
    ${\mathfrak g}$-paths lift to $G$-paths the essential points are:
    (i) The base-paths of ${\mathfrak g}$-paths always lie on orbits
    of $G$; and (ii) The restriction of $\beta \colon G \rightarrow M$
    to any source-fibre of $G$ is a principal bundle over the
    corresponding orbit. See, e.g., \cite{Crainic_Fernandes_11}. }
  These $G$-paths are necessarily integral paths of
  $\xi_\mathrm{c}^\mathrm{R}$ mapped by $\beta $ to integral paths of
  $\xi^\dagger$.

  To prove \eqref{identity}, let $\xi^\mathrm{R}$ denote the
  right-invariant vector field on the Lie group $G_0$ corresponding to
  $\xi \in {\mathfrak g}_0$. Then $\xi^\mathrm{R}_\mathrm{c}$ and
  $\xi^\mathrm{R}$ will be $\Omega$-related. It follows that $\Omega $
  maps integral paths of $\xi^\mathrm{R}_\mathrm{c}$ to integral paths
  of $\xi^\mathrm{R}$, and so \eqref{identity} holds.
\end{proof}

\subsection{Proof of Palais' local integrability theorem}\lab{hgn}
According to Dazord, all action algebroids are integrable. That is to
say, there exists a Lie groupoid $G$ whose Lie algebroid is
${\mathfrak g}_0 \times M$. We refer to Dazord's original article
\cite{Dazord_97} for the elegant proof, but will make no use of the
explicit model for $G$ constructed there. (An alternative model
appears in \S\ref{concrete} below.) By Lie I for Lie groupoids --- see
e.g., \cite{Crainic_Fernandes_11} --- we may take $G $ to be source
simply-connected; then using Lie II for Lie groupoids, we obtain a Lie
groupoid morphism $\Omega \colon G \rightarrow G_0$ whose derivative
is the canonical projection $\omega \colon {\mathfrak g}_0 \times M
\rightarrow {\mathfrak g}_0$.

We next construct `tubular neighbourhood' of $M$ in $G$ modelled on a
neighbourhood of $\identity \times M$ in $G_0 \times M$. Let $B
\subset G_0$ be any open neighbourhood of the identity that is the
diffeomorphic image of some convex open neighbourhood of zero in
${\mathfrak g}_0$, under the exponential map $\exp \colon {\mathfrak
  g}_0 \rightarrow G_0$.  Then, by local existence and uniqueness
theorems for ODE's, there exists an open neighbourhood $W_\mathrm{big}
\subset B \times M$ of $\identity \times M$ such that $E \colon
W_\mathrm{big}\rightarrow G$, given by
$E(\exp(\xi),m)=\Phi^1_{\xi^\mathrm{R}_\mathrm{c}} (m)$, is
well-defined (notation is as in Lemma \ref{loc}).

Evidently, the tangent map $TE \colon T{W_\mathrm{big}}\rightarrow TG$
has full rank at all points of $\identity \times M$, implying that $E$
is a local diffeomorphism in some neighbourhood of $\identity \times
M$. By a topological argument familiar from proofs of the tubular
neighbourhood theorem (see, e.g., \cite[IV, \S5, p.~109]{Lang_95}) the
reader will agree that $E$ is in fact a global diffeomorphism, when
restricted to a possibly smaller neighbourhood of $\identity \times M$
in $B\times M$, which we nevertheless continue to denote by
$W_\mathrm{big}$.

Notice that by construction $\alpha(E(g,m))=m$, where $\alpha $
denotes the source map, and that 
\begin{equation}
  \Omega(E(g,m))=g, \mathlab{gor}
\end{equation}
by Equation \eqref{bsj1} of the preceding lemma. Consequently, $\Omega
\times \alpha \colon E(W_\mathrm{big}) \rightarrow W_\mathrm{big}$ is
the inverse of $E$; in particular $\Omega \times \alpha $ is injective
on $Z_\mathrm{big}:=E(W_\mathrm{big})$. Note that $Z_\mathrm{big}$ is
paracompact and Hausdorff, because $W_\mathrm{big}\subset B \times M$
is evidently paracompact and Hausdorff.

We now apply the following fact proven in Appendix \ref{gpmu}, which
generalizes a well-known observation about Lie groups:
\begin{proposition}
  Let $Z_\mathrm{big} \subset G$ be a paracompact, Hausdorff, open
  neighbourhood of $M$ in a Lie groupoid $G$ over $M$. Then there
  exists a neighbourhood $Z \subset Z_\mathrm{big}$ of $M$ such that
  for all $h,g \in Z$ we have $hg \in Z_\mathrm{big}$, whenever $h$
  and $g $ are multipliable.
\end{proposition}
\noindent%
Setting $W = E^{-1}(Z)$, we seek to show that $\phi_g(m) :=
\beta(E(g,m))$ defines a local action $(g, m) \mapsto \phi_g(m) \colon
W \rightarrow M$ of $G_0$ on $M$; recall $\beta$ denotes the target
map.

The requirement \eqrefs{loc}{ip1} follows immediately. To establish
\eqrefs{loc}{ip3}, suppose $(m,g) \in W$, $(h, \phi_g(m)) \in W$ and
$(hg,m) \in W$. Then $E(h,\phi_g(m))$ and $E(g,m)$ are multipliable
elements of $Z \subset G$. Moreover, we claim
\begin{equation}
  E(h,\phi_g(m))E(g,m)=E(hg,m).\mathlab{trt}
\end{equation}
As $Z Z \subset Z_\mathrm{big}$ (in the sense of the proposition) both sides
of the equation are elements of $Z_\mathrm{big}$, on which $\Omega
\times \alpha $ is injective. To show equality holds in \eqref{trt} it
therefore suffices to show equality after applying $\Omega \times
\alpha $ to both sides of the equation. But this follows easily from
\eqref{gor} and the fact that $\Omega \colon G \rightarrow G_0$ is a
groupoid morphism.

We now compute
\begin{multline*}
  \phi_h(\phi_g(m)))=\beta\Big(\,E(h,\phi_g(m))\,\Big) =\beta
  \Big(\,E(h,\phi_g(m))E(g,m)\,\Big)\\=\beta
  \Big(\,E(hg,m)\,\Big)=\phi_{hg}(m).
\end{multline*}
The third equality follows from \eqref{trt}. This completes the proof
that $(g,m) \mapsto \phi_g (m) \colon W \rightarrow M$ is a local
action.

Note that the definition of $E$ and \eqrefs{loc}{bsj1} imply that
$\phi_{\exp(\xi)}=\Phi^1_{\xi^\dagger}$, so that the local
transformations $\phi_g$ have the form asserted in the theorem.

\subsection{An explicit Lie groupoid integrating ${\mathfrak g}_0
  \times  M$}
\lab{concrete}%
Fix a local action $(m,g) \mapsto \phi_g(m)\colon W \rightarrow M$ of
$G_0$ on $M$ integrating the action of ${\mathfrak g}_0 $ on $M$, in
the sense of Theorem \ref{loc}. In the sequel a statement such as `$m
\in \domain \phi_g$' should be interpreted as `$(g,m) \in W$'. We
assume \eqrefs{loc}{ip4} holds, i.e., $m \in \domain\phi_g
\Leftrightarrow \phi_g(m) \in \domain \phi_{g^{-1}}$. Evidently,
$\phi_{g^{-1}}=\phi_g^{-1}$.

By a {\df chain} for the local action, let us mean a finite sequence
$(g_1,g_2,\ldots,g_k,m)$, where $g_1, g_2, \ldots, g_k \in G_0$ and $m
\in M$ are such that $m \in \domain \phi_{g_1}\circ \phi_{g_2}\circ
\cdots \circ \phi_{g_k}$. Explicitly, this means:
\begin{conditions}
\item $m_k:=m \in \domain \phi_{g_k}$,
\item $m_j:=\phi_{g_{j+1}}(m_{j+1})\in \domain \phi_{g_{j}} $, for $j
  =k-1, k-2, \ldots, 1$.
\end{conditions}
An equivalence relation $\sim$ on chains is defined by declaring
\[(g_1,g_2,\ldots,g_k,m)\sim (\bar g_1,\bar g_2,\ldots,\bar g_{\bar
  k},\bar m)\] if:
\begin{conditions}
  \item $m={\bar m}$,
  \item $g_1 g_2 \ldots g_k = \bar g_1\bar g_2 \ldots \bar g_{\bar
      k}$,
  \item $\germ_m \phi_{g_1}\circ \phi_{g_2}\circ \cdots \circ
    \phi_{g_k}=\germ_{{\bar m}}\phi_{\bar g_1}\circ \phi_{\bar
      g_2}\circ \cdots \circ \phi_{\bar g_{\bar k}}$.
\end{conditions}

We define $G$ to be the set of all chains modulo the equivalence
relation $\sim$. The class represented by a chain
$(g_1,g_2,\ldots,g_k,m)$ will be denoted $[g_1,g_2,\ldots,g_k,m]$.
\begin{theorem}
  The set $G$ is a Lie groupoid over $M$ with Lie
  algebroid ${\mathfrak g}_0 \times M$. Furthermore, the map $\Omega
  \colon G \rightarrow G_0$ defined
  by
  \[
  \Omega([g_1,g_2,\ldots,g_k,m])=g_1 g_2 \cdots g_k
  \]
  is a Lie groupoid morphism whose derivative is $\omega \colon
  {\mathfrak g}_0 \times M \rightarrow {\mathfrak g}_0 $.
\end{theorem}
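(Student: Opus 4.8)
The plan is to realise $G$ as germs of compositions $\phi_{g_1}\circ\cdots\circ\phi_{g_k}$ tagged by the group product $g_1\cdots g_k$, and to transport a smooth structure from $G_0\times M$ along the map $\Omega\times\alpha$. First I would record the operations: source $\alpha[g_1,\dots,g_k,m]=m$; target $\beta[g_1,\dots,g_k,m]=(\phi_{g_1}\circ\cdots\circ\phi_{g_k})(m)$; unit $m\mapsto[\identity,m]$; inversion $[g_1,\dots,g_k,m]\mapsto[g_k^{-1},\dots,g_1^{-1},\beta(\cdots)]$; and multiplication by \emph{concatenation} of chains (defined exactly when the source of the first matches the target of the second). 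That each descends to $\sim$-classes is immediate, since the second defining condition of $\sim$ makes the product $g_1\cdots g_k$ a $\sim$-invariant and the third makes the germ one; the groupoid axioms then follow from associativity of germ-composition and of multiplication in $G_0$, together with $\phi_{g^{-1}}=\phi_g^{-1}$ granted by \eqrefs{loc}{ip4}. The real content is the manifold structure, for which — exactly as the preamble to \S\ref{av} warns — I would lean on Palais' local integrability theorem, which is what supplies the smooth local action $\phi$ in the first place.

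For charts, to each chain $c=(g_1,\dots,g_k,m_0)$ with product $g^0=g_1\cdots g_k$ I would attach $\chi_c(\xi,m)=[\exp\xi,g_1,\dots,g_k,m]$, defined for $(\xi,m)$ near $(0,m_0)$ in ${\mathfrak g}_0\times M$ (small $\xi$ and $m$ near $m_0$ keep the chain admissible, since $\phi_{\identity}=\identity$ and domains are open). These cover $G$, as $\chi_c(0,m_0)=[g_1,\dots,g_k,m_0]$, and each $\chi_c$ is injective: equal classes force equal base points and equal products $(\exp\xi)g^0$, whence $\xi=\xi'$ for small $\xi,\xi'$. Composing with $\Omega\times\alpha$ gives $(\xi,m)\mapsto((\exp\xi)g^0,m)$, a local diffeomorphism onto an open set; so $\Omega\times\alpha$ is étale by construction and the $\chi_c$ are its sheets. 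When $\chi_c(\xi,m)=\chi_{c'}(\xi',m')$ one reads off $m'=m$ and, from matching products, $\exp\xi'=(\exp\xi)\,g^0\,((g')^0)^{-1}$, determining $\xi'$ as a $\log$-of-smooth, hence smooth, function of $\xi$.

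The hard part is showing that the overlap of two charts — the locus where the \emph{germs}, and not merely the base points and products, agree — is \emph{open}, so that the transition map lives on an open set. This I would settle by a cocycle computation. Writing $\psi_c=\phi_{g_1}\circ\cdots\circ\phi_{g_k}$ and fixing a point $(\xi_*,m_*)$ of agreement, the local-action law gives $\phi_{\exp\xi}\circ\psi_c=\phi_{a(\xi)}\circ(\phi_{\exp\xi_*}\circ\psi_c)$ with the \emph{left} correction $a(\xi)=(\exp\xi)(\exp\xi_*)^{-1}$ depending on $\xi$ alone; crucially, the product constraint $\exp\xi'=(\exp\xi)g^0((g')^0)^{-1}$ forces the correction computed from the second chart to be the \emph{same} element $a(\xi)$. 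Since germ-equality at $m_*$ means $\phi_{\exp\xi_*}\circ\psi_c=\phi_{\exp\xi'_*}\circ\psi_{c'}$ on a whole neighbourhood $O$ of $m_*$, applying $\phi_{a(\xi)}$ on the left yields $\phi_{\exp\xi}\circ\psi_c=\phi_{\exp\xi'}\circ\psi_{c'}$ throughout $O$ for all nearby $\xi$; thus germs agree on the neighbourhood $\{\xi\text{ near }\xi_*\}\times O$, proving openness. With this the $\chi_c$ form a smooth atlas. (Second countability passes from $G_0\times M$ through the étale $\Omega\times\alpha$; Hausdorffness is the one delicate point — the usual subtlety of germ constructions — which I would verify using that the source-fibres carry the monodromy of the flat connection and are genuine manifolds, or, failing a self-contained argument, work within the convention admitting merely Hausdorff base and source-fibres.)

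Finally I would read the Lie groupoid axioms off the charts: $\alpha,\beta$ are submersions and multiplication and inversion are smooth, being exponential expressions in the chart coordinates. To identify the Lie algebroid I would exhibit ${\mathfrak g}_0\times M\to\ker T\alpha|_M$, $(\xi,m)\mapsto\frac{d}{dt}\big|_{t=0}[\exp(t\xi),m]$, as a vector-bundle isomorphism; its anchor is $\frac{d}{dt}\big|_{0}\beta[\exp(t\xi),m]=\frac{d}{dt}\big|_{0}\phi_{\exp(t\xi)}(m)=\xi^\dagger(m)$ (using $\phi_{\exp(t\xi)}=\Phi^t_{\xi^\dagger}$ from the Palais proof), so $A(G)$ is the action algebroid, and the bracket matches because the constant sections integrate, via \eqref{bsj1}, to the flows realizing the ${\mathfrak g}_0$-action. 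For $\Omega$, well-definedness is the product condition of $\sim$, the morphism law $\Omega(hg)=\Omega(h)\Omega(g)$ is immediate from concatenation, smoothness is the expression $(\exp\xi)g^0$ in charts, and the derivative sends $\frac{d}{dt}\big|_{0}[\exp(t\xi),m]$ to $\frac{d}{dt}\big|_{0}\exp(t\xi)=\xi=\omega(\xi,m)$, consistent with \eqref{identity}.
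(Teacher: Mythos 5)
Correct, and essentially the paper's own proof: your charts $\chi_c(\xi,m)=[\exp\xi,g_1,\ldots,g_k,m]$ are the paper's local parameterizations $\varphi(g,m)=[g,g_1,\ldots,g_k,m]$ composed with $\exp$ (the paper uses small group balls $B_r\subset G_0$ instead of small Lie algebra elements), your transition maps agree with the paper's $(g,m)\mapsto(gg_0^{-1}\bar g_0,m)$, and your cocycle argument for openness of the chart overlap --- the left correction $a(\xi)$ forced to be the same for both charts by the product constraint, then the local-action law \eqrefs{loc}{ip3} applied on each side of the germ equality --- is precisely the paper's Lemma B with $\delta=a(\xi)$. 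The only divergences are cosmetic: the paper does the domain bookkeeping more explicitly (its Lemma A and the displays \eqref{ee1}--\eqref{ee8}), it leaves the algebroid identification to the reader much as you sketch it, and it never raises the Hausdorffness point you flag, implicitly working in the convention you fall back on.
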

That $G$ is a set-theoretic groupoid over $M$ is clear. The groupoid
operations closely resemble those for action groupoids: The identity
at $m \in M $ is $[\identity, m]$. The source and targe maps
$\alpha,\beta:G \rightarrow M$ are given by
$\alpha([g_1,g_2,\ldots,g_k,m])=m$, $\beta([g_1,g_2,\ldots,g_k,m])=
(\phi_{g_1}\circ \phi_{g_2}\circ \cdots \circ
\phi_{g_k})(m)$. Multiplication is defined by
\[
[g_1,g_2,\ldots,g_k,m][\bar g_1,\bar g_2,\ldots,\bar g_{\bar
  k},\bar m]=[g_1,g_2,\ldots,g_k,\bar g_1,\bar g_2,\ldots,\bar g_{\bar
  k},\bar m].
\]
The inverse of $[g_1,g_2,\ldots,g_k,m]$ is
$[g_k^{-1},g_{k-1}^{-1},\ldots,g_1^{-1},(\phi_{g_1}\circ
\phi_{g_2}\circ \cdots \circ \phi_{g_k})(m)]$, which is well-defined
on account of the requirement \eqrefs{loc}{ip4}.

After defining the smooth structure of $G$ in \S\ref{smooth} below, it
is a straightforward exercise left to the reader to show this
structure is compatible with the groupoid operations, and that the
following map determines an isomorphism between the action algebroid
${\mathfrak g}_0 \times M$ and the abstract Lie algebroid of $G$:
\begin{equation*}
  (\xi, m)\mapsto \frac{d}{dt}\big[\,\exp(t \xi), m\,\big]\Big|_{t=0}.
\end{equation*}
%
% To see that $G$ is source-connected, observe that for any arbitrary
% point $[g_1,g_2,\ldots g_k,m]\in G$ one may define a path $t \mapsto
% p_t \colon [0,1] \rightarrow \alpha^{-1}(m)$ joining
% $[g_2,\ldots,g_k,m]$ to $[g_1,g_2,\ldots g_k,m]$ by $p_t=[\exp(t
% \xi),g_2,\ldots g_k,m]$, where $\xi = \exp^{-1}(g_1)$. With respect to
% the smooth structure defined in \S\ref{smooth}, one readily shows this
% path is continuous (indeed smooth). An obvious inductive argument
% shows that $[g_1,g_2,\ldots g_k,m]\in G$ and $[\identity,m]$ lie in
% the same path-component of $\alpha^{-1}(m)$.
The claim regarding $\Omega $ is immediately verified.

\subsection{The smooth structure of $G$}\lab{smooth} Define $B \subset
G_0$ as in \S\ref{hgn} and define a `ball of radius $r$' by
$B_r:=\exp(r\exp^{-1}(B))\subset B$; here $0<r\le 1$. To construct a
smooth atlas on $M$ will require the following preliminary
observations:
\begin{lemmaA}\mbox{}
  \begin{conditions}
    \item \lab{local1} For every relatively compact open set $V\subset M$ there
      exists $r>0$ such that $V \subset \domain \phi_g$ for all $g\in B_r$.
    \item\lab{local2} Given $r>0$ and $g \in B_r$, there exists $\rho >0$ such
      that $B_\rho g \subset B_r$.
    % \item\lab{local3} Let $g,h$ be elements of $B$ and $V \subset M$
    %   an open connected set. Suppose $hg \in B $, $V \subset \domain
    %   \phi_g$, $\phi_g(V)\subset \domain \phi_h$, and $V \subset
    %   \domain \phi_{hg}$. Then $\phi_h \circ \phi_g|_V =
    %   \phi_{hg}|_V$.
  \end{conditions}
\end{lemmaA}
\begin{proof}
  As $V$ is relatively compact, one can readily construct a
  neighbourhood of $V$ in $W$ of the form $B_r \times V$, for some
  $r>0$. The claim \eqref{local1} immediately follows. Conclusion
  \eqref{local2} follows from the continuity of multiplication in
  $G_0$.
\end{proof}

For each $[g_1,g_2,\ldots,g_k,m_0] \in G$ we define a local
parameterization $\varphi$ of $G$ as follows. First, let $U$ be a
relatively compact open neighbourhood of $m_0$ contained in the domain
of $\phi_{g_1}\circ \phi_{g_2}\circ \cdots \circ \phi_{g_k}$ and put
\[
V:= (\phi_{g_1}\circ \phi_{g_2}\circ \cdots
\circ \phi_{g_k})(U).
\]
Applying \eqref{local1}, choose $r>0$ small enough that $V \subset
\domain \phi_g$ for all $g \in B_r$. Then $\varphi \colon B_r \times U
\rightarrow G $ is well-defined by
\begin{equation*}
  \varphi(g,m) = [g,{g_1},{g_2},\ldots,g_{g_k},m].
\end{equation*}
That $\varphi$ is injective is a triviality.
  
For the purpose of examining the transition functions associated with
the collection of all such local parameterizations, consider a second
point $[\bar g_1,\bar g_2,\ldots,\bar g_{\bar k},\bar m_0] \in G$, and
an associated local parameterization $\bar\varphi \colon B_{\bar r}
\times \bar U \rightarrow G$, defined analogously. Furthermore,
suppose that the images of the parameterizations have non-trivial
overlap
\[ {\mathcal O} :=\varphi(B_r \times U)\cap \bar\varphi(B_{\bar r}
\times \bar U)\subset G.\] Then, for some $m' \in U \cap \bar U$,
there exists $g_0 \in B_r$ and $\bar g_0 \in B_{\bar r}$ such that
$\varphi(g_0,m')=\bar\varphi(\bar g_0,m')$. In particular, we will
have
\begin{equation}
  g_0 g_1 g_2 \cdots g_k = \bar g_0 \bar g_1 \bar g_2 \cdots \bar
  g_{\bar k}.\mathlab{ee0}
\end{equation}
Given this, one readily shows that the transition function
$\bar\varphi^{-1}\circ \varphi\colon \varphi^{-1}({\mathcal
  O})\rightarrow \bar\varphi^{-1}({\mathcal O})$ is given by $(g,m)
\mapsto (g g_0^{-1}\bar g_0, m)$ which, as a map on $B_r \times U$, is
smooth. To show the local parameterizations define a smooth structure
on $G$, it remains only to establish the following fact, whose proof
will rest crucially on the fundamental property \eqrefs{loc}{ip3} of
the local action:
\begin{lemmaB}
  The set $\varphi^{-1}({\mathcal O})$ is open in $B_r \times U$.
\end{lemmaB}
\begin{proof}
  An arbitrary point of $\varphi^{-1}({\mathcal O})$ is of the form
  $(g_0, m')$ for some $g_0 \in B_r$ and $m' \in U \cap \bar U$
  satisfying $\varphi(g_0,m')=\bar\varphi(\bar g_0,m')$, for some
  $\bar g_0 \in B_{\bar r}$. This means \eqref{ee0} holds and
\begin{equation*}
  \germ_{{m'}} \phi_{g_0}\circ \phi_{g_1}\circ \phi_{g_2}\circ \cdots \circ
  \phi_{g_k}=\germ_{{{m'}}}\phi_{\bar g_0}\circ \phi_{\bar g_1}\circ \phi_{\bar
    g_2}\circ \cdots \circ \phi_{\bar g_{\bar k}}.
\end{equation*}
In particular, there exists an open neighbourhood $U' \subset U \cap
\bar U  $ of ${m'}$ such that
\begin{equation}
  \phi_{g_0}\circ \phi_{g_1}\circ \phi_{g_2}\circ \cdots \circ
  \phi_{g_k}\big|_{U'}=\phi_{\bar g_0}\circ \phi_{\bar g_1}\circ \phi_{\bar
    g_2}\circ \cdots \circ \phi_{\bar g_{\bar
      k}}\big|_{U'}\enspace.\mathlab{ee9}  
\end{equation}
To prove the lemma, we will show that the neighbourhood $B_\rho g_0
\times U'$ of $(g_0, {m'})$ lies in $\varphi^{-1}({\mathcal O})$, for
some some $\rho > 0$. To this end, apply Lemma A to find $\rho > 0 $
small enough that all the following hold, for all $\delta \in B_\rho$:
\begin{gather}
  B_\rho g_0 \subset B_r, \mathlab{ee1}\\
  B_\rho \bar g_0 \subset B_{\bar r}, \mathlab{ee2}\\
  \phi_{g_0}(V) \subset \domain \phi_\delta, \mathlab{ee3}\\
  \phi_{\bar g_0}(\bar V) \subset \domain \phi_\delta. \mathlab{ee4}
\end{gather}
With $\delta \in B_\rho $ arbitrary henceforth, we have, by
\eqref{ee1} and \eqref{ee2},
\begin{gather}
  V \subset \domain \phi_{\delta
    g_0},\mathlab{ee5}\\
  \bar V \subset \domain \phi_{\delta \bar g_0}.\mathlab{ee6}
\end{gather}
% \begin{gather}
%   U \subset \domain \phi_{\delta g_0}\circ \phi_{g_1}\circ
%   \phi_{g_2}\circ \cdots \circ
%   \phi_{g_k}, \\
%   \bar U \subset \domain \phi_{\delta \bar g_0}\circ \phi_{\bar
%     g_1}\circ \phi_{\bar g_2}\circ \cdots \circ \phi_{\bar g_{\bar
%       k}}.
% \end{gather}
% On the other hand, from \eqref{ee3} and
% \eqref{ee4}, we also have
% \begin{gather}
%   U \subset \domain \phi_{\delta}\circ \phi_{g_1}\circ
%   \phi_{g_2}\circ \cdots \circ
%   \phi_{g_k}, \\
%   \bar U \subset \domain \phi_{\delta}\circ \phi_{\bar
%     g_1}\circ \phi_{\bar g_2}\circ \cdots \circ \phi_{\bar g_{\bar
%       k}}.
% \end{gather}
By \eqrefs{loc}{ip3}, we have 
\begin{gather}
  \phi_{\delta g_0}\big|_V = \phi_\delta \circ \phi_{g_0}\big|_V,
  \mathlab{ee7}\\
  \phi_{\delta \bar g_0}\big|_{\bar V} = \phi_\delta \circ \phi_{\bar
    g_0}\big|_{\bar V},  \mathlab{ee8}
\end{gather}
where all maps appearing are well-defined, on account of equations
\eqref{ee3}--\eqref{ee6}.

By \eqref{ee1} and \eqref{ee2}, the open set $B_\rho g_0 \times U'$
lies in $B_r \times U = \domain \varphi$ and the open set $B_\rho \bar
g_0 \times U'$ lies $B_{\bar r} \times \bar U = \domain
\bar\varphi$. Our proof of the lemma is complete if we can show
$\varphi(B_\rho g_0 \times U') \subset \bar\varphi(B_\rho \bar g_0
\times U') $. It suffices to show that $\varphi(\delta g_0, m) =
\bar\varphi(\delta \bar g_0, m)$ for arbitrary $\delta \in B_\rho$ and
$m \in U'$. This is equivalent to showing:
\begin{gather*}
  \delta g_0 g_1 g_2\cdots g_k = \delta \bar g_0 \bar g_1 \bar g_2
  \cdots \bar g_{\bar k} \\
  \text{and}\quad \germ_{m} \phi_{\delta g_0}\circ\phi_{g_1}\circ
  \phi_{g_2}\circ \cdots \circ \phi_{g_k}=\germ_{m}\phi_{\delta \bar
    g_0}\circ \phi_{\bar g_1}\circ \phi_{\bar g_2}\circ \cdots \circ
  \phi_{\bar g_{\bar k}}.
\end{gather*}
The first equation follows immediately from \eqref{ee0}. Regarding the
second, we have
\begin{align*}
  \germ_{m} \phi_{\delta g_0}\circ\phi_{g_1}\circ \phi_{g_2}\circ
  \cdots \circ \phi_{g_k}&=\germ_{m} \phi_{\delta}\circ
  \phi_{g_0}\circ\phi_{g_1}\circ \phi_{g_2}\circ \cdots \circ
  \phi_{g_k}, \qquad \text{by
    \eqref{ee7}}\\
  &=\germ_{m}\phi_{\delta}\circ \phi_{\bar g_0}\circ \phi_{\bar
    g_1}\circ \phi_{\bar g_2}\circ \cdots \circ
  \phi_{\bar g_{\bar k}}, \qquad \text{by \eqref{ee9}}\\
  &=\germ_{m}\phi_{\delta \bar g_0}\circ \phi_{\bar g_1}\circ
  \phi_{\bar g_2}\circ \cdots \circ \phi_{\bar g_{\bar k}},\qquad
  \text{by \eqref{ee8}.}
\end{align*}
\end{proof}

\subsection{Integrability}\lab{tuy}
Let $\nabla $ denote the canonical flat Cartan connection on
${\mathfrak g}_0 \times M$. The following result is a special instance
of Lie III for pseudoactions (Theorem \ref{lie3}):
\begin{proposition}
  There exists a source-connected Lie groupoid $G$ supporting a
  pseudoaction ${\mathcal F} $ that both integrates and formally
  integrates $({\mathfrak g}_0 \times M,\nabla)$.
\end{proposition}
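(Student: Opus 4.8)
The plan is to build the pseudoaction $\mathcal F$ directly on the explicit Lie groupoid $G$ constructed in \S\ref{concrete}, exploiting the groupoid morphism $\Omega \colon G \rightarrow G_0$ whose derivative is the projection $\omega \colon {\mathfrak g}_0 \times M \rightarrow {\mathfrak g}_0$. The key idea is that $\Omega$ plays here the role that the projection $M \times M \rightarrow {\mathbb R}^2$ played in the non-associative example of \S2: the pseudotransformations we want should be (connected components of) level sets of $\Omega$. Concretely, I would declare $\mathcal F$ to be the foliation of $G$ whose leaves are the connected components of the fibres $\Omega^{-1}(g)$, $g \in G_0$. Since $G$ is source simply-connected by construction, it is source-connected, which gives the hypothesis needed in the final assertion of Theorem \ref{lie3}.

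The first technical step is to verify that this really is a foliation and that its tangent distribution is the Cartan connection whose infinitesimalization is $\nabla$. For this I would identify the distribution $D = \ker T\Omega$ and check, using \eqref{identity} together with Lemma \ref{loc}, that $D$ is precisely the rank-$n$ distribution spanned at each $g \in G$ by the vectors $\Phi$-flow directions of the right-invariant fields $\xi_{\mathrm c}^{\mathrm R}$ corresponding to $\nabla$-parallel (i.e.\ constant) sections $\xi_{\mathrm c}$ of ${\mathfrak g}_0 \times M$. Because $\Omega$ is a submersion (its derivative $\omega$ is surjective) and a groupoid morphism, $\ker T\Omega$ is automatically a multiplicatively closed, integrable distribution transverse to the source fibres; by Proposition \ref{onepoint} its integrating foliation $\mathcal F$ is a pseudoaction. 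That $\nabla$ is recovered as the infinitesimalization of the tangent Cartan connection $D$ follows because the constant sections $\xi_{\mathrm c}$ are exactly the $\nabla$-parallel sections, so $\mathcal F$ formally integrates $({\mathfrak g}_0 \times M, \nabla)$.

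It then remains to prove the genuine integrability statement $\pseud(\mathcal F) = \pseud(\nabla)$. The inclusion $\pseud(\nabla) \subset \pseud(\mathcal F)$ is the easy direction: each generator $\Phi_{\xi^\dagger}^t$ of $\pseud(\nabla)$ is covered, via $\beta$, by the flow of $\xi_{\mathrm c}^{\mathrm R}$, whose integral curves lie in a single leaf of $\mathcal F$ (they stay in one fibre of $\Omega$ by \eqref{identity}), so $\Phi_{\xi^\dagger}^t \in \pseud(\mathcal F)$. For the reverse inclusion, I would take a local bisection $b$ integrating $\mathcal F$ and show that $\beta \circ b$ is locally a finite composite of flows $\Phi_{\xi^\dagger}^t$; here the explicit chain description of $G$ is the crucial tool, since every point of $G$ is represented by a chain $[g_1,\dots,g_k,m]$, and writing each $g_j = \exp(\xi_j)$ expresses $\beta$ of that point as a composite of time-one flows of the $\xi_j^\dagger$. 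Combining this with the local uniqueness of integrating bisections (Proposition \eqrefs{terminology}{uniqueness}), exactly as in the proof of Theorem \ref{goofy} in \S\ref{lp}, yields $\phi|_V \in \pseud(\nabla)$ on a neighbourhood $V$ of any prescribed point, and the collating property of pseudogroups finishes the argument.

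The main obstacle I anticipate is the last step: showing that an arbitrary leaf of $\mathcal F$ through a point $[g_1,\dots,g_k,m]$ is locally cut out by a bisection of the form $m \mapsto [g_1,\dots,g_k,m]$, and that this bisection is covered under $\beta$ by the composite flow $\Phi_{\xi_1^\dagger}^1 \circ \cdots \circ \Phi_{\xi_k^\dagger}^1$ where $g_j=\exp \xi_j$. This requires care because the domains of the local action maps $\phi_{g_j}$ are only open neighbourhoods, so one must shrink $V$ to guarantee all intermediate compositions are defined, and one must confirm that the bisection $m \mapsto [g_1,\dots,g_k,m]$ genuinely integrates $\mathcal F$ rather than merely passing through the correct point --- i.e.\ that its image lies in a single $\Omega$-fibre, which holds because $\Omega([g_1,\dots,g_k,m]) = g_1 \cdots g_k$ is independent of $m$. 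Once this is established the equality of pseudogroups is immediate.
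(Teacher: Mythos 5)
Your construction and argument coincide with the paper's own proof: the same chain groupoid $G$ from \S\ref{concrete}, the same foliation by connected components of fibres of $\Omega$, the inclusion $\pseud(\nabla)\subset\pseud({\mathcal F})$ via the flows of $\xi_{\mathrm c}^{\mathrm R}$ and \eqref{identity}, and the reverse inclusion via the chain bisections $m'\mapsto[g_1,\ldots,g_k,m']$ combined with Proposition \eqrefs{terminology}{uniqueness} and the collating property. One correction to your sketch of the formal-integration step: the leaves are \emph{not} tangent to the right-invariant fields $\xi_{\mathrm c}^{\mathrm R}$ themselves (by \eqref{identity} their flows move \emph{across} the fibres of $\Omega$, and their span has rank $\dimension {\mathfrak g}_0$, not $n$); rather, the leaf through $\Phi^t_{\xi_{\mathrm c}^{\mathrm R}}(m)$ is tangent to the image of $T_m(\Phi^t_{\xi_{\mathrm c}^{\mathrm R}}\circ\iota_M)$, and verifying that this distribution infinitesimalizes to $\nabla$ --- i.e., that $dS(\xi_{\mathrm c}(m))=J^1_m\xi_{\mathrm c}$ --- is exactly the jet computation \eqref{g0}--\eqref{goot} that the paper carries out and that your proposal leaves implicit.
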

\noindent% 
Specifically, let $G$ denote the Lie groupoid integrating ${\mathfrak
  g}_0 \times M$ defined in \S\ref{concrete} and $\Omega \colon G
\rightarrow G_0$ the Lie groupoid morphism whose derivative is the
canonical projection $\omega \colon {\mathfrak g}_0 \times M
\rightarrow {\mathfrak g}_0 $, and whose explicit form appears in
Theorem \ref{concrete}. Replace $G$ by its source-connected
component\footnote{We believe $G$ is already source-connected ---
  indeed source-simply-connected --- but do not prove or need this
  fact here.} and let ${\mathcal F}$ be the pseudoaction whose leaves
are the connected components of fibres of the submersion $\Omega
\colon G \rightarrow G_0$.
\begin{proof}[Proof that ${\mathcal F}$ integrates ${\mathfrak g}_0
  \times M$]%
  Our task is to prove $\pseud(\nabla) = \pseud({\mathcal F})$. To
  prove $\pseud(\nabla) \subset \pseud({\mathcal F})$, it suffices to
  show that for any $\xi \in {\mathfrak g}_0 $ and $t \in {\mathbb R}
  $, we have $\Phi_{\xi^\dagger}^t \in \pseud({\mathcal F})$. Here and
  below we adopt the notation of Lemma \ref{loc}. By part \eqref{bsj2}
  of that lemma, $U:=\domain \Phi_{\xi^\dagger}^t \subset M \subset G$
  lies in $\domain \Phi_{\xi_\mathrm{c}^\mathrm{R}}^t$ and a local
  bisection $b \colon U \rightarrow G$ is therefore well-defined by
  $b(m):=\Phi^t_{\xi_\mathrm{c}^\mathrm{R}}(m)$. Moreover, by
  \eqrefs{loc}{identity}, we have
  \begin{equation} 
    \Omega(b(m))=\exp(t \xi),\quad\text{for all~}m \in
    U,\mathlab{hair}
  \end{equation} 
  which shows that $b$ integrates ${\mathcal F}$, because leaves of
  ${\mathcal F}$ are connected components of fibres of $\Omega$. It
  follows that $\beta \circ b \in \pseud({\mathcal F})$. But, by
  \eqrefs{loc}{bsj1}, $\beta \circ b =\Phi^t_{\xi^\dagger}$, implying
  $\Phi^t_{\xi^\dagger}\in \pseud({\mathcal F})$, as desired.

  Suppose $\phi = \beta \circ b$ for some local bisection $b \colon U
  \rightarrow G$ integrating ${\mathcal F}$, $U=\domain \phi$. Then as
  all elements of $\pseud({\mathcal F})$ are locally of this form, it
  suffices in establishing $\pseud({\mathcal F}) \subset
  \pseud(\nabla)$ to show an arbitrary point $m \in U$ has an open
  neighbourhood $\tilde U$ such that $\phi|_{\tilde U} \in
  \pseud(\nabla)$. This follows from the collating property of
  pseudogroups.
   
  We have $b(m)=[g_1,g_2,\ldots g_k,m]$ for some $g_1,g_2 \in B$. In
  that case $U'=\domain \phi_{g_1} \circ \phi_{g_2} \circ \cdots \circ
  \phi_{g_k}$ is an open neighbourhood of $m$ and the map $b^m \colon
  U' \rightarrow G$ given by $b^m(m')=[g_1,g_2,\ldots, g_k,m']$ is a
  local bisection integrating ${\mathcal F} $ such that
  $b^m(m)=b(m)$. By the local uniqueness of local bisections
  integrating ${\mathcal F}$ (Proposition
  \eqrefs{terminology}{uniqueness}) we conclude $b|_{\tilde
    U}=b^m|_{\tilde U}$, where $\tilde U$ is the connected component
  of $U \cap U'$ containing $m$. Consequently 
  \[
    \phi|_{\tilde  U}=\phi_{g_1} \circ \phi_{g_2} \circ \cdots \circ
  \phi_{g_k}|_{\tilde U}
  \]
  and whence $\phi|_{\tilde U}\in \pseud(\nabla)$ as claimed.
\end{proof}
\begin{proof}[Proof that ${\mathcal F}$ formally integrates
  ${\mathfrak g}_0 \times M$]
  Recalling the definitions of \S\ref{onepoint} and, in particular,
  the correspondence \eqrefs{onepoint}{cspan}, we see that showing
  ${\mathcal F} $ formally integrates $({\mathfrak g}_0 \times M,
  \nabla)$ amounts to showing $dS(x)=J_m^1X + (\nabla X)(m)$, for any
  $x \in {\mathfrak g}_0 \times M $, where $X$ is an arbitrary
  extension of $x$ to a local section of ${\mathfrak g}_0 \times M
  $. Here $S \colon G \rightarrow J^1 G$ is the Cartan connection on
  $G$ which, as a distribution on $G$, is tangent to the leaves of
  ${\mathcal F} $; indeed, from the explicit form of local bisections
  integrating $\mathcal F$ mentioned in the preceding paragraph, $S$
  is given by
  \begin{equation}
    S([g_1,g_2,\ldots,g_k,m])\dot\gamma(0)=
    \frac{d}{ds}[g_1,g_2,\ldots,g_k,\gamma(s)]\Big|_{s=0},\mathlab{g0}
  \end{equation}
  for any path $s \mapsto \gamma(s) \in M$ with $\dot\gamma(0)\in T_mM$.

  Adopting the notation of Lemma \ref{loc}, we take $x=(\xi, m) $, and
  $X = \xi_\mathrm{c}$, and need to prove
  \begin{equation}
    dS(\xi_\mathrm{c}(m))=J_m^1 \xi_\mathrm{c}.   \mathlab{goot}
  \end{equation}
  First, one readily sees that
  \begin{equation*}
    \xi_\mathrm{c}^\mathrm{R}([g_1,g_2,\ldots,g_k,m])=\frac{d}{dt}[\exp(t
    \xi)g_1,g_2,\ldots,g_k,m]\big|_{t=0}
  \end{equation*}
  for any $[g_1,g_2,\ldots,g_k,m]\in G$. This vector field can be
  explicitly integrated locally; in  particular, 
\begin{equation}
  \Phi_{\xi_\mathrm{c}^\mathrm{R}}^t(m)=[\exp(t \xi),m].\mathlab{g1}
\end{equation}
Next, we claim 
\begin{equation}
  S(\Phi_{\xi_\mathrm{c}^\mathrm{R}}^t(m))=T_m(\Phi_{\xi_\mathrm{c}^\mathrm{R}}^t
  \circ \iota_M),\mathlab{g2}
\end{equation}
where $\iota \colon M \rightarrow G$ is the inclusion. Indeed, for any
path $s \mapsto \gamma(s) \in M$ with $\dot\gamma(0)\in T_m M$,
\begin{align*}
  S\left(\,\Phi_{\xi_\mathrm{c}^\mathrm{R}}^t(m)\,\right)\dot\gamma(0)&=S\left(\,[\exp(t
    \xi),m]\,\right)\dot\gamma(0),\qquad&\text{by \eqref{g1}}\\
  &=\frac{d}{ds}[\exp(t \xi),\gamma(s)]\Big|_{s=0},& \text{by
    \eqref{g0}}\\
  &=\frac{d}{ds}\Phi_{\xi_\mathrm{c}^\mathrm{R}}^t(\gamma(s))\Big|_{s=0},&
  \text{by
    \eqref{g1}}\\
  &=T_m(\Phi_{\xi_\mathrm{c}^\mathrm{R}}^t \circ \iota_M)
  \dot\gamma(0).
\end{align*}
Using \eqref{g2} we compute 
\begin{align*}
  dS(\xi_\mathrm{c}(m))=\frac{d}{dt}S(\Phi_{\xi_\mathrm{c}^\mathrm{R}}^t
  (m))\Big|_{t=0}&=\frac{d}{dt}T_m(\Phi_{\xi_\mathrm{c}^\mathrm{R}}^t(m))\Big|_{t=0}\\  
  &=J^1_m \xi_\mathrm{c}^\mathrm{R},
\end{align*}
completing the proof of \eqref{goot}. Note that the last equality
follows from our implicit identification of the Lie algebroid of $J^1
G$ with $J^1 {\mathfrak g} $, as discussed in \S\ref{jet}.
\end{proof}

\section{Integrating morphisms}\lab{morphh}

\subsection{Morphisms between pseudoactions}\lab{mk}
By a {\df morphism} of pseudoactions ${\Pi} \colon {\mathcal F}_1
\rightarrow {\mathcal F}_2$ let us mean a Lie groupoid morphism
${\Pi} \colon G_1 \rightarrow G_2$ of the underlying Lie groupoids
that maps leaves of ${\mathcal F}_1$ into leaves of ${\mathcal
  F}_2$. If, in addition, ${\Pi}$ covers a local diffeomorphism $f
\colon M_1 \rightarrow M_2$, then the restriction of ${\Pi} $ to any
leaf of ${\mathcal F}_1$ will be a local diffeomorphism into a leaf of
${\mathcal F}_2$ (because ${\Pi} $ respects source and target maps).

\begin{proposition}
  Let ${\Pi} \colon G_1 \rightarrow G_2$ be a morphism of Lie
  groupoids covering a local diffeomorphism $f \colon M_1 \rightarrow
  M_2$ and assume $G_1$ is source-connected. Let $ {\mathcal F}_1 $
  and ${\mathcal F}_2$ be pseudoactions on $G_1$ and $ G_2$
  respectively, and $\nabla^1$ and $\nabla^2$ the corresponding
  infinitesimal Cartan connections. Then ${\Pi} $ is a morphism
  ${\mathcal F}_1 \rightarrow {\mathcal F}_2$ if and only if its
  derivative $ {\pi} = d {\Pi} \colon {\mathfrak g}_1 \rightarrow
  {\mathfrak g}_2$ respects the connections $\nabla^1,\nabla^2$.
\end{proposition}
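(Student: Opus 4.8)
The plan is to recast each side of the equivalence as an intertwining relation with the one-jet prolongation functor $J^1$, and then to pass between the two by differentiation and integration, using source-connectedness of $G_1$ only for the latter. Write $S_i \colon G_i \to J^1 G_i$ for the Cartan connection attached to $\mathcal{F}_i$, so that its tangent distribution is $D_i(g) = S_i(g)(T_{\alpha_i(g)}M_i)$, and write $s_i \colon \mathfrak{g}_i \to J^1\mathfrak{g}_i$ for the splitting $s_iX = J^1X + \nabla^i X$ that is the infinitesimalization of $S_i$. Because $\Pi$ covers the local diffeomorphism $f$, it prolongs to a Lie groupoid morphism $J^1\Pi \colon J^1 G_1 \to J^1 G_2$, acting on an element $\mu \colon T_mM_1 \to T_gG_1$ of $J^1_g G_1$ by $J^1\Pi(\mu) = T\Pi \circ \mu \circ (T_mf)^{-1}$; and, $f$ being a local diffeomorphism, $\pi$ likewise prolongs to $J^1\pi \colon J^1\mathfrak{g}_1 \to J^1\mathfrak{g}_2$.

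First I would reformulate the two conditions. Since the distributions $D_i$ are integrable, the requirement that $\Pi$ carry leaves of $\mathcal{F}_1$ into leaves of $\mathcal{F}_2$ is exactly $T\Pi(D_1) \subseteq D_2$. A short computation with source maps, using $\alpha_2 \circ \Pi = f \circ \alpha_1$ and $T\alpha_1 \circ S_1(g) = \mathrm{id}$, gives $T\alpha_2 \circ T\Pi \circ S_1(g) = T_mf$, so $T\Pi \circ S_1(g)$ is injective with $n$-dimensional image $T\Pi(D_1(g))$ transverse to the source fibre through $\Pi(g)$, where $n = \dim M_1 = \dim M_2$; as $D_2(\Pi(g))$ has the same rank, the inclusion $T\Pi(D_1(g)) \subseteq D_2(\Pi(g))$ forces equality, which in turn is equivalent to the pointwise identity $T\Pi \circ S_1(g) = S_2(\Pi(g)) \circ T_mf$. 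Hence the condition that $\Pi$ be a morphism $\mathcal{F}_1 \to \mathcal{F}_2$ is equivalent to the global identity $J^1\Pi \circ S_1 = S_2 \circ \Pi$ of maps $G_1 \to J^1 G_2$. An entirely parallel unwinding of the correspondence $s_iX = J^1X + \nabla^i X$ shows that the condition that $\pi$ \emph{respect} $\nabla^1,\nabla^2$ is equivalent to the infinitesimal identity $J^1\pi \circ s_1 = s_2 \circ \pi$ of maps $\mathfrak{g}_1 \to J^1\mathfrak{g}_2$.

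It then remains to relate these two identities, and here I would invoke naturality: under the canonical identification $\mathrm{Lie}(J^1 G_i) \cong J^1\mathfrak{g}_i$ recalled in \S\ref{jet}, the Lie functor intertwines prolongation, so that $\mathrm{Lie}(J^1\Pi) = J^1\pi$. Combined with $\mathrm{Lie}(S_i) = s_i$ and $\mathrm{Lie}(\Pi) = \pi$, functoriality then yields $\mathrm{Lie}(J^1\Pi \circ S_1) = J^1\pi \circ s_1$ and $\mathrm{Lie}(S_2 \circ \Pi) = s_2 \circ \pi$. The forward implication now follows by differentiating the group-level identity to obtain the algebroid-level one. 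For the reverse implication, the algebroid-level identity says that the two Lie groupoid morphisms $J^1\Pi \circ S_1$ and $S_2 \circ \Pi$, both defined on the source-connected groupoid $G_1$, have equal derivatives; since a morphism out of a source-connected Lie groupoid is determined by its derivative (the uniqueness part of Lie II for Lie groupoids), they must coincide, which is the group-level identity. I expect the genuine work to sit in the naturality step --- confirming that the identification $\mathrm{Lie}(J^1 G) \cong J^1\mathfrak{g}$ is compatible with prolongation of morphisms covering local diffeomorphisms, so that $\mathrm{Lie}(J^1\Pi) = J^1\pi$ --- everything else being the rank count above, the definitional identity $\mathrm{Lie}(S_i) = s_i$, and an appeal to source-connected uniqueness.
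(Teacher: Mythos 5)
Your proposal is correct and follows essentially the same route as the paper: both reformulate the leaf-preservation condition as the commuting square $J^1\Pi \circ S_1 = S_2 \circ \Pi$, reformulate connection-compatibility as the infinitesimal square $J^1\pi \circ s_1 = s_2 \circ \pi$ via $s_iX = J^1X + \nabla^i X$, and pass between the two by differentiation in one direction and the source-connected uniqueness part of Lie II in the other, with the naturality identity $d(J^1\Pi) = J^1\pi$ as the hinge. The only cosmetic differences are that you prove the first reformulation by an explicit rank count (the paper leaves it to the reader via Proposition \ref{terminology}) and you unwind the infinitesimal equivalence directly, where the paper rephrases it through flatness and parallel sections.
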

\begin{remark}
  Since $\pi \colon {\mathfrak g}_1 \rightarrow {\mathfrak g}_2$
  covers a local diffeomorphism $f \colon M_1 \rightarrow M_2$ by
  hypothesis, that $\pi $ should respect connections simply means that
  $\pi_*(\nabla^1_VX_1)=\nabla^2_{f_*V} \pi_*X_1$ for all local
  sections $X_1$ of ${\mathfrak g}_1$, and vector fields $V$ on $M_1$,
  whose domain of definition is small enough that the pushforward
  operations $\pi_*$ and $f_*$ make sense (see the proof below).
\end{remark}
\begin{proof}
  With the help of Proposition \ref{terminology} it is not hard to
  establish the following:
  \begin{lemma}
    Under the hypotheses of the proposition ${\Pi} $ is a morphism
    ${\mathcal F}_1 \rightarrow {\mathcal F}_2 $ if and only if the
    following diagram is commutative:
    \begin{equation*}
      \begin{CD}
        J^1 G_1   @>{J^1 {\Pi}  }>> J^1 G_2\\
        @A{S_1}AA @AA{S_2}A \\
        G_1@>{{\Pi} }>> G_2
      \end{CD}\quad .
    \end{equation*}
    Here $S_1$ and $S_2$ denote the corresponding (global) Cartan
    connections.
  \end{lemma}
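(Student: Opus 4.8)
The plan is to pass from the geometric condition ``$\Pi$ maps leaves into leaves'' to the algebraic identity $J^1\Pi\circ S_1 = S_2\circ\Pi$ by translating both sides through \emph{integrating bisections}, as Proposition~\ref{terminology} suggests. The bridge I would use is the elementary observation that an element $\mu\in J^1_gG_i$ is completely determined by its image $\mathrm{image}(\mu)\subset T_gG_i$: since $T\alpha_i\circ\mu=\identity$, the map $\mu$ is exactly the inverse of the isomorphism $T_g\alpha_i$ restricted to $\mathrm{image}(\mu)$. In particular $S_i(g)$ is the \emph{unique} one-jet over $g$ whose image is the leaf-tangent space $D_i(g)$, and for any local bisection $b$ of $G_i$ one obtains, from \eqref{u1} together with a dimension count, the equivalence: $b$ integrates $\mathcal F_i$ $\iff$ $\mathrm{image}(T_mb)=D_i(b(m))$ for all $m$ $\iff$ $J^1_mb=S_i(b(m))$ for all $m$. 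I will also use that every $g\in G_1$ lies on an integrating bisection: because the leaf through $g$ is a pseudotransformation, $\beta_1$ restricts to a local diffeomorphism on it near $g$, and the local inverse of this restriction is a bisection integrating $\mathcal F_1$ passing through $g$.

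Next I would record the naturality of prolongation relative to $\Pi$. Given a local bisection $b_1$ of $G_1$, the composite $b_2:=\Pi\circ b_1\circ f^{-1}$ is a local bisection of $G_2$, since $\beta_2\circ b_2 = f\circ\beta_1\circ b_1\circ f^{-1}=\identity$. Unwinding the definition of the prolonged morphism $J^1\Pi$ (it sends the jet of a bisection to the jet of the pushed-forward bisection), equivalently $J^1\Pi(\mu)=T\Pi\circ\mu\circ(Tf)^{-1}$ in the linear-map model of \S\ref{jet}, the chain rule yields $J^1\Pi(J^1_mb_1)=J^1_{f(m)}b_2$, both sides being $T_{b_1(m)}\Pi\circ T_mb_1\circ(T_mf)^{-1}$. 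This is the one computation that must be carried out with care, and it is where the hypothesis that $\Pi$ covers the local diffeomorphism $f$ enters, so that $(T_mf)^{-1}$ makes sense.

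With these two ingredients the equivalence becomes a short chase. Suppose first that $\Pi$ is a morphism $\mathcal F_1\to\mathcal F_2$, and let $g\in G_1$ be arbitrary. Choosing an integrating bisection $b_1$ through $g$ with $g=b_1(m)$, the image of $b_2$ is the $\Pi$-image of a leaf of $\mathcal F_1$, hence lies in a leaf of $\mathcal F_2$, so $b_2$ integrates $\mathcal F_2$; using $S_1(g)=J^1_mb_1$ and $S_2(\Pi(g))=J^1_{f(m)}b_2$ gives
\[
J^1\Pi(S_1(g))=J^1\Pi(J^1_mb_1)=J^1_{f(m)}b_2=S_2(b_2(f(m)))=S_2(\Pi(g)),
\]
and since $g$ was arbitrary the diagram commutes. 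Conversely, assume the diagram commutes and let $b_1$ be any bisection integrating $\mathcal F_1$. Then for each $m$, $J^1_{f(m)}b_2=J^1\Pi(J^1_mb_1)=J^1\Pi(S_1(b_1(m)))=S_2(\Pi(b_1(m)))=S_2(b_2(f(m)))$, so by the bridge $b_2$ integrates $\mathcal F_2$; equivalently $\Pi$ maps the leaf-image of $b_1$ into a leaf of $\mathcal F_2$. As such bisections cover each leaf $L$ of $\mathcal F_1$, the connected set $\Pi(L)$ is locally contained in leaves of $\mathcal F_2$ and therefore lies in a single one. Hence $\Pi$ is a morphism $\mathcal F_1\to\mathcal F_2$.

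I expect the main obstacle to be bookkeeping rather than conceptual: getting the naturality identity $J^1\Pi(J^1 b_1)=J^1 b_2$ exactly right, including the re-basing by $f^{-1}$, and keeping straight the identification of a one-jet with its image subspace that converts part \eqref{u1} of Proposition~\ref{terminology} into the clean statement $J^1b=S_i\circ b$ for integrating $b$. The only genuinely global input is the final connectedness step, which upgrades ``$\Pi$ sends each integrating bisection into a leaf'' to ``$\Pi$ sends each connected leaf into a single leaf''; source-connectedness of $G_1$, inherited from the hypotheses of the proposition, ensures this passage causes no difficulty (and local uniqueness, Proposition~\ref{terminology}\,\eqref{uniqueness}, guarantees the integrating bisections are well-behaved where they overlap).
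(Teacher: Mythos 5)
Your proposal is correct, and it takes exactly the route the paper intends: the paper omits the proof of this lemma, remarking only that it is ``not hard to establish'' with the help of Proposition~\ref{terminology} and then fixing the meaning of $J^1\Pi$ via pushforward of bisections, which is precisely your argument (the jet--image dictionary upgrading \eqref{u1} to $J^1_m b = S_i(b(m))$, the chain-rule naturality $J^1\Pi(J^1_m b_1)=J^1_{f(m)}(\Pi_* b_1)$, and the connectedness-of-leaves step at the end). So you have supplied a valid filling-in of the paper's omitted proof rather than an alternative to it.
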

  \noindent%
  Since ${\Pi} $ is not a base-preserving morphism, one must take
  care to interpret the map $J^1 {\Pi} \colon J^1 G_1 \rightarrow J^1
  G_2$ appropriately: Since we assume ${\Pi}$ covers a local
  diffeomorphism, each local bisection $b \colon U \rightarrow G_1$
  pushes forward to a local bisection ${\Pi}_*b := {\Pi} \circ b
  \circ f^{-1} \colon f(U) \rightarrow G_2$, provided $U$ is
  sufficiently small that $f^{-1} \colon f(U) \rightarrow U $ makes
  sense. Then $J^1 {\Pi}\, (J^1_m b):=J^1_{f(m)}({\Pi}_*b)$.

  We define a map $J^1 {\pi} \colon J^1 {\mathfrak g}_1 \rightarrow
  J^1 {\mathfrak g}_2$ similarly: Every local section $X$ of
  ${\mathfrak g}_1$, defined on a sufficiently small domain, has a
  pushforward ${\pi}_*X={\pi} \circ X \circ f^{-1}$, and we define
  $J^1 {\pi}\,(J^1_{m}X)=J^1_{f(m)}{\pi}_*X$. We leave it to the
  reader to verify that $d(J^1 {\Pi})=J^1 (d {\Pi})$, i.e., $d(J^1
  {\Pi})=J^1 {\pi} $. Then, taking derivatives, commutativity of the
  diagram in the lemma guarantees commutativity of
  \begin{equation}
      \begin{CD}
        J^1 {\mathfrak g}_1   @>{J^1 {\pi}  }>> J^1 {\mathfrak g}_2\\
        @A{s_1}AA @AA{s_2}A \\
        {\mathfrak g}_1@>{{\pi} }>> {\mathfrak g}_2
      \end{CD}\quad .\mathlab{dd}
  \end{equation}
  Here $s_1=dS_1$ and $s_2=dS_2$. In fact, commutativity of each
  diagram is equivalent to that of the other, because two Lie groupoid
  morphisms $J^1 G_1 \rightarrow G_2$ having the same derivative must
  coincide (because $G_1$, and hence $J^1 G_1$, has connected
  source-fibres). So, to prove the proposition it suffices, by the
  lemma, to show that diagram \eqref{dd} commutes if and only if
  ${\pi} $ preserves the connections $\nabla^1,\nabla^2$. Since the
  two connections $\nabla_1$ and $\nabla_2$ are flat, the latter is
  equivalent to the assertion that the pushforward ${\pi}_* X$ of
  every $\nabla^1$-parallel local section $X$ of ${\mathfrak g}_1$ is
  $\nabla^2$-parallel. That this is equivalent to commutativity of
  \eqref{dd} follows easily from the equations $s_1X = J^1 X +
  \nabla^1 X$ and $s_2 Y = J^1 Y + \nabla^2 Y$ defining the
  connections $\nabla^1$ and $\nabla^2$.
\end{proof}

\subsection{Integrating morphisms}\lab{mre}
Using the preceding proposition, we prove the following
result on integrating morphisms between twisted Lie algebra actions:
\begin{theorem}[Lie II for pseudoactions]
  Let $({\mathfrak g}_1, \nabla^1)$ and $({\mathfrak g}_2, \nabla^2)$
  be twisted Lie algebra actions formally integrated by pseudoactions
  ${\mathcal F}_1$ on $G_1$ and ${\mathcal F}_2$ on $G_2$, and assume
  $G_1$ is source-simply-connected. Suppose ${\pi} \colon {\mathfrak
    g}_1 \rightarrow {\mathfrak g}_2$ is a Lie algebroid morphism
  respecting the connections $\nabla^1,\nabla^2$, and that ${\pi} $
  covers a local diffeomorphism $f \colon M_1 \rightarrow M_2$. Then
  there exists a unique morphism ${\Pi} \colon {\mathcal F}_1
  \rightarrow {\mathcal F}_2$ whose derivative is ${\pi} $.
\end{theorem}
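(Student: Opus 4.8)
The plan is to reduce the statement (Lie II for pseudoactions) to Lie II for Lie groupoids, using Proposition \ref{mk} to translate the connection-respecting hypothesis on $\pi$ into a statement about Cartan connections, and then using the source-simply-connectedness of $G_1$ to integrate. The structure is dictated by the formal-integration hypothesis: since ${\mathcal F}_1$ formally integrates $({\mathfrak g}_1,\nabla^1)$, the Lie algebroid of $G_1$ is ${\mathfrak g}_1$, and similarly for $G_2$; and the infinitesimalizations of the Cartan connections $S_1,S_2$ are $\nabla^1,\nabla^2$. So a Lie algebroid morphism $\pi\colon{\mathfrak g}_1\rightarrow{\mathfrak g}_2$ is given, and we must produce a groupoid morphism ${\Pi}\colon G_1\rightarrow G_2$ integrating it and mapping leaves into leaves.

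First I would invoke Lie II for Lie groupoids (cited in \S\ref{hgn}) to integrate the Lie algebroid morphism $\pi$ to a Lie groupoid morphism ${\Pi}\colon G_1\rightarrow G_2$, which exists and is unique precisely because $G_1$ is source-simply-connected. This ${\Pi}$ automatically covers the local diffeomorphism $f$, since $\pi$ does and ${\Pi}$ is base-compatible with its derivative. The uniqueness clause of the theorem then comes for free from the uniqueness in Lie II for groupoids: any morphism of pseudoactions with derivative $\pi$ is in particular a groupoid morphism with derivative $\pi$, hence equals ${\Pi}$.

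The heart of the argument is to check that this ${\Pi}$ is in fact a \emph{morphism of pseudoactions}, i.e.\ that it maps leaves of ${\mathcal F}_1$ into leaves of ${\mathcal F}_2$. Here I would apply Proposition \ref{mk} in the direction that has not yet been used: since $G_1$ is source-connected (indeed source-simply-connected), ${\Pi}$ is a morphism ${\mathcal F}_1\rightarrow{\mathcal F}_2$ \emph{if and only if} its derivative $\pi$ respects the connections $\nabla^1,\nabla^2$. But this latter condition is exactly the hypothesis of the theorem. Thus once ${\Pi}$ is known to be a well-defined groupoid morphism covering $f$ with $d{\Pi}=\pi$, Proposition \ref{mk} immediately upgrades it to a morphism of pseudoactions, completing the proof.

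The main obstacle I anticipate is a bookkeeping, not a conceptual, one: I must confirm that the hypotheses of Proposition \ref{mk} are genuinely met by the ${\Pi}$ produced by Lie II. Specifically, Proposition \ref{mk} requires that $G_1$ be source-connected and that ${\Pi}$ cover a local diffeomorphism; both hold here, the former being weaker than source-simple-connectedness and the latter being inherited from $\pi$ covering $f$. One subtlety worth checking explicitly is that the ${\mathcal F}_i$ are genuinely pseudoactions with infinitesimalizations $\nabla^i$ --- but this is guaranteed by the formal-integration hypothesis, so no further integrability argument (e.g.\ appeal to Theorem \ref{onepoint}) is needed. The entire proof is therefore a clean composition of Lie II for groupoids with Proposition \ref{mk}, with the source-simply-connectedness of $G_1$ doing double duty: it yields both existence/uniqueness of ${\Pi}$ and the source-connectedness premise of Proposition \ref{mk}.
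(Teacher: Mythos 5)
Your proposal is correct and follows essentially the same route as the paper: the paper's proof is precisely the two-step composition of Lie II for Lie groupoid morphisms (giving the unique ${\Pi}$ with $d{\Pi}=\pi$, using source-simple-connectedness of $G_1$) followed by Proposition \ref{mk} to upgrade ${\Pi}$ to a morphism of pseudoactions. Your additional bookkeeping --- that ${\Pi}$ covers $f$, that source-connectedness of $G_1$ suffices for Proposition \ref{mk}, and that uniqueness is inherited from Lie II for groupoids --- is all implicit in the paper's terser argument and is carried out correctly.
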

\begin{proof}
  By Lie II for Lie groupoid morphisms, there exists a unique Lie
  groupoid morphism ${\Pi} \colon G_1 \rightarrow G_2$ whose
  derivative is ${\pi} $. Proposition \ref{mk} implies ${\Pi} $ is a
  morphism of pseudoactions ${\mathcal F}_1 \rightarrow {\mathcal
    F}_2$.
\end{proof}

\subsection{Coverings of pseudoactions}\lab{mjy}
Let us say that a Lie groupoid $\tilde G $ over $M$ {\df covers} a Lie
groupoid $G$ over $M$ if $\tilde G$ and $G$ are source-connected and
have the same Lie algebroid ${\mathfrak g} $, and if there is a
base-preserving Lie groupoid morphism $\Pi \colon \tilde G \rightarrow
G$ whose derivative $\pi \colon {\mathfrak g} \rightarrow {\mathfrak
  g} $ is the identity.  Any covering $\Pi \colon \tilde G \rightarrow
G$ is necessarily a {\em surjective} local diffeomorphism.

A {\df covering} of a pseudoaction ${\mathcal F} $ on ${\mathcal G} $
is a covering $\Pi \colon \tilde G \rightarrow G$ of Lie groupoids,
together with a pseudoaction $\tilde {\mathcal F} $ on $\tilde G$,
such that $\Pi $ is a morphism of pseudoactions $\tilde {\mathcal F}
\rightarrow {\mathcal F} $. It follows in that case, from Proposition
\ref{mk}, that $\tilde {\mathcal F} $ and $ {\mathcal F} $ formally
integrate the same twisted Lie algebra action on $M$.
\begin{proposition}
  For any covering $\tilde {\mathcal F} \rightarrow {\mathcal F} $ of
  pseudoactions one has $\pseud(\tilde {\mathcal F})=\pseud({\mathcal
    F})$. 
\end{proposition}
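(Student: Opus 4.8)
The plan is to establish the two inclusions $\pseud(\tilde{\mathcal F})\subseteq\pseud({\mathcal F})$ and $\pseud({\mathcal F})\subseteq\pseud(\tilde{\mathcal F})$ separately. The common engine is the behaviour of $\Pi$ on bisections: because $\Pi\colon\tilde G\rightarrow G$ is base-preserving it intertwines the source and target projections of $\tilde G$ and $G$, so for any local bisection $\tilde b$ of $\tilde G$ the composite $\Pi\circ\tilde b$ is again a local bisection, now of $G$, and it determines the \emph{same} local transformation of $M$, i.e.\ $\phi_{\Pi\circ\tilde b}=\phi_{\tilde b}$. Moreover, since $\Pi$ carries leaves of $\tilde{\mathcal F}$ into leaves of ${\mathcal F}$, a connected set mapped by $\tilde b$ into a single leaf of $\tilde{\mathcal F}$ is mapped by $\Pi\circ\tilde b$ into a single leaf of ${\mathcal F}$; hence if $\tilde b$ integrates $\tilde{\mathcal F}$ then $\Pi\circ\tilde b$ integrates ${\mathcal F}$. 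I would record these two facts as a short preliminary remark.

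Granting this, the inclusion $\pseud(\tilde{\mathcal F})\subseteq\pseud({\mathcal F})$ is immediate: any $\tilde\phi\in\pseud(\tilde{\mathcal F})$ is locally of the form $\phi_{\tilde b}$ for a local bisection $\tilde b$ integrating $\tilde{\mathcal F}$, and $\phi_{\tilde b}=\phi_{\Pi\circ\tilde b}$ exhibits $\tilde\phi$ locally as a transformation determined by a bisection integrating ${\mathcal F}$. Thus $\tilde\phi\in\pseud({\mathcal F})$.

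The reverse inclusion is the crux and requires \emph{lifting} bisections through the covering. Given $\phi\in\pseud({\mathcal F})$ and a point $m_0$ in its domain, I may assume, after shrinking the domain, that $\phi=\phi_b$ for a local bisection $b$ integrating ${\mathcal F}$. Since $\Pi$ is a surjective local diffeomorphism, I choose $\tilde g_0\in\Pi^{-1}(b(m_0))$; base-preservation forces $\tilde g_0$ to lie over $m_0$. The leaf $\tilde L$ of $\tilde{\mathcal F}$ through $\tilde g_0$ is a pseudotransformation, so inverting the appropriate projection on $\tilde L$ near $\tilde g_0$ produces a local bisection $\tilde b$ of $\tilde G$, integrating $\tilde{\mathcal F}$, with $\tilde b(m_0)=\tilde g_0$. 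Now $\Pi\circ\tilde b$ and $b$ are two local bisections integrating ${\mathcal F}$ that agree at $m_0$, so by the local uniqueness of integrating bisections (Proposition \eqrefs{terminology}{uniqueness}) they coincide on a neighbourhood $V$ of $m_0$. Consequently $\phi_{\tilde b}=\phi_{\Pi\circ\tilde b}=\phi_b=\phi$ on $V$, so $\phi$ agrees near $m_0$ with an element of $\pseud(\tilde{\mathcal F})$. As $m_0$ was arbitrary, the collating property of pseudogroups yields $\phi\in\pseud(\tilde{\mathcal F})$.

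The main obstacle is exactly this lifting step: the merely groupoid-theoretic lift $\tilde g_0$ of $b(m_0)$ carries no a priori guarantee that it extends to a bisection reproducing $b$ downstairs. What rescues the argument is the interplay of three ingredients — surjectivity of $\Pi$, the pseudotransformation property of the leaves of $\tilde{\mathcal F}$ (which supplies the local bisection $\tilde b$ through $\tilde g_0$), and the local uniqueness of integrating bisections (which forces $\Pi\circ\tilde b$ to equal $b$). Everything else, namely the preservation of source and target and the easy inclusion, is formal.
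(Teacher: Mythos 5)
Your proposal is correct and takes essentially the same route as the paper: the easy inclusion is obtained by pushing integrating bisections forward along the base-preserving morphism $\Pi$, and the reverse inclusion rests on exactly the two points the paper flags as crucial --- surjectivity of $\Pi$, and the pseudotransformation property of the leaf of $\tilde{\mathcal F}$ through a chosen lift of $b(m_0)$, which supplies a local bisection $\tilde b$ integrating $\tilde{\mathcal F}$. The only difference is cosmetic: the paper arranges $\Pi\circ\tilde b=b$ by construction, taking $\tilde b$ to be the lift of $b(V)$ through the local diffeomorphism $\Pi$ restricted to that leaf, whereas you build $\tilde b$ from the leaf directly and then deduce $\Pi\circ\tilde b=b$ near $m_0$ from the local uniqueness of integrating bisections (Proposition \eqrefs{terminology}{uniqueness}) --- the same uniqueness device the paper itself employs in \S\ref{lp} and \S\ref{tuy}.
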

\begin{proof}
  Let $\Pi \colon \tilde G \rightarrow G$ denote the underlying
  morphism of Lie groupoids. As it is more-or-less immediate that
  $\pseud(\tilde {\mathcal F})\subset \pseud({\mathcal F})$, we prove
  only the reverse inclusion. Suppose $\phi \in \pseud({\mathcal
    F})$. Then it suffices to show that each $m_0$ in the domain $U$
  of $\phi$ has a neighbourhood $V$ such that $\phi|_V \in
  \pseud(\tilde {\mathcal F})$. Shrinking $U\ni m_0$ if necessary, we
  have $\phi=\beta \circ b$, for some local bisection $ b \colon U
  \rightarrow G$ integrating ${\mathcal F}$; we are denoting both
  target projections $\tilde G \rightarrow M$ and $G \rightarrow M$ by
  $\beta $. Put $ g = b(m_0)$. Since $\Pi \colon \tilde G \rightarrow
  G$ is surjective (the crucial point) there exists $\tilde g \in
  \tilde G$ with $\Pi(\tilde g)=g$. Since $\Pi$ is morphism of
  pseudoactions $\tilde {\mathcal F} \rightarrow {\mathcal F} $, $\Pi
  $ maps the leaf $\tilde {\mathcal L} $ of $\tilde {\mathcal F} $
  through $\tilde g$ into the leaf ${\mathcal L} $ of ${\mathcal F}$
  through $g$. In fact, as $\Pi $ is base-preserving, and $\tilde
  {\mathcal L} $ and ${\mathcal L} $ are pseudotransformations, the
  restriction $\Pi \colon \tilde {\mathcal L} \rightarrow {\mathcal L}
  $ is a local diffeomorphism. There consequently exists some open
  sets $V \subset U$ and $W \subset \tilde {\mathcal L} $ such that
  $W$ is mapped diffeomorphically onto $b(V) \subset {\mathcal L} $ by
  $\Pi$, and diffeomorphically onto $V$ by the source projection. The
  local bisection $\tilde b \colon V \rightarrow \tilde G$ of $\tilde
  {\mathcal F} $ whose image is $W$ integrates $\tilde {\mathcal F} $,
  so that $\beta \circ \tilde b \in \pseud(\tilde {\mathcal F})$. But
  by construction, $\beta \circ \tilde b$ coincides with $\phi=\beta
  \circ b$ on $V$, which shows $\phi|_V \in \pseud(\tilde {\mathcal
    F})$.
\end{proof}
\begin{theorem}[Lie I for pseudoactions]
  Let $({\mathfrak g} , \nabla)$ be a twisted Lie algebra action
  formally integrated by a pseudoaction ${\mathcal F}$ on a
  source-connected Lie groupoid $G$. Then there exists a second
  pseudoaction $\tilde {\mathcal F} $ formally integrating
  $({\mathfrak g}, \nabla)$ whose underlying Lie groupoid $\tilde G$
  is source-simply-connected.  The pseudoaction $\tilde {\mathcal F} $
  is uniquely defined, up to isomorphism.
\end{theorem}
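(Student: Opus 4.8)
The plan is to transport the Cartan connection underlying $\mathcal F$ to the source-simply-connected cover of $G$ and then integrate it to a foliation by invoking the flatness criterion of \S\ref{onepoint}. First I would apply Lie~I for Lie groupoids (see \cite{Crainic_Fernandes_11}) to the source-connected groupoid $G$, obtaining a source-simply-connected Lie groupoid $\tilde G$ over $M$ with the same Lie algebroid $\mathfrak g$, together with a base-preserving covering morphism $\Pi \colon \tilde G \rightarrow G$ whose derivative is $\identity_{\mathfrak g}$. Since $\tilde G$ integrates $\mathfrak g$, its jet prolongation $J^1 \tilde G$ integrates $J^1 \mathfrak g$, by the identification recalled in \S\ref{jet}.

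Next I would manufacture a Cartan connection on $\tilde G$ infinitesimalizing to $\nabla$. Let $S \colon G \rightarrow J^1 G$ be the Cartan connection tangent to $\mathcal F$, and $s = dS \colon \mathfrak g \rightarrow J^1 \mathfrak g$ the associated splitting, related to $\nabla$ by \eqrefs{onepoint}{cspan}. As $\tilde G$ is source-simply-connected and $J^1 \tilde G$ integrates $J^1 \mathfrak g$, Lie~II for Lie groupoids integrates the Lie algebroid morphism $s$ to a unique Lie groupoid morphism $\tilde S \colon \tilde G \rightarrow J^1 \tilde G$ with $d\tilde S = s$. To see $\tilde S$ is a Cartan connection, I would verify it is a right-inverse of the canonical projection $\mathrm{pr}\colon J^1 \tilde G \rightarrow \tilde G$: the composite $\mathrm{pr}\circ\tilde S$ is a Lie groupoid endomorphism of $\tilde G$ whose derivative is the projection $J^1\mathfrak g \rightarrow \mathfrak g$ precomposed with $s$, namely $\identity_{\mathfrak g}$; since $\tilde G$ is source-simply-connected, the uniqueness half of Lie~II forces $\mathrm{pr}\circ\tilde S = \identity_{\tilde G}$. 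Thus $\tilde S$ is a Cartan connection on $\tilde G$ with infinitesimalization $\nabla$.

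Now I would apply Theorem \ref{onepoint}: because $\nabla$ is flat and $\tilde G$ is source-connected, the rank-$n$ distribution $\tilde D \subset T\tilde G$ determined by $\tilde S$ is tangent to a foliation $\tilde{\mathcal F}$, which is then a pseudoaction formally integrating $(\mathfrak g, \nabla)$ by construction. As a bonus, Proposition \ref{mk} applied to the base-preserving morphism $\Pi$ --- whose derivative $\identity_{\mathfrak g}$ trivially respects $\nabla$ --- shows $\Pi$ is a morphism of pseudoactions $\tilde{\mathcal F} \rightarrow \mathcal F$, so that $\tilde{\mathcal F} \rightarrow \mathcal F$ is a covering in the sense of \S\ref{mjy}.

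For uniqueness, let $\tilde{\mathcal F}'$ on $\tilde G'$ be another formal integration with $\tilde G'$ source-simply-connected. Then $\tilde G$ and $\tilde G'$ are both source-simply-connected integrations of $\mathfrak g$, so Lie~I and Lie~II for groupoids provide a unique base-preserving isomorphism $\Psi \colon \tilde G \rightarrow \tilde G'$ with $d\Psi = \identity_{\mathfrak g}$; a final application of Proposition \ref{mk} promotes $\Psi$ to an isomorphism of pseudoactions $\tilde{\mathcal F} \rightarrow \tilde{\mathcal F}'$. I expect the main obstacle to be the second step: correctly reading $\tilde S$ as a morphism into $J^1\tilde G$ (not the $J^1$ of any ambient groupoid) and using the uniqueness clause of Lie~II to confirm that the integrated morphism really is a right-inverse of $\mathrm{pr}$, hence a genuine Cartan connection. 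The flatness input (Theorem \ref{onepoint}) is the deep ingredient but may be cited; the remainder is an organized deployment of Lie~I and Lie~II for groupoids together with Proposition \ref{mk}.
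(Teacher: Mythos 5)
Your argument is correct, but the existence half follows a genuinely different route from the paper's. The paper never re-integrates infinitesimal data: having obtained the source-simply-connected covering $\Pi \colon \tilde G \rightarrow G$ from Lie I and Lie II for groupoids, it defines $\tilde {\mathcal F}$ outright as the pullback foliation, whose leaves are the connected components of $\Pi$-preimages of leaves of ${\mathcal F}$; this evidently covers ${\mathcal F}$, hence formally integrates $({\mathfrak g},\nabla)$, and the uniqueness step (the identity morphism on $\tilde G$ plus Proposition \ref{mk}) is the same as yours. You instead push the splitting $s=dS$ through Lie II to obtain $\tilde S \colon \tilde G \rightarrow J^1 \tilde G$, check that it is a right-inverse of the projection via the uniqueness clause of Lie II --- a clean and correct verification --- and then call on Theorem \ref{onepoint} to integrate the resulting flat Cartan connection to a foliation. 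The trade-off is real: Theorem \ref{onepoint} is stated in the paper but not proved there (its proof is deferred to Salazar's thesis and to a proof to appear elsewhere, and the author explicitly remarks that those techniques are not needed in the rest of the paper), so the paper's pullback construction keeps Lie I for pseudoactions independent of that deep ingredient, whereas your proof imports it. What your route buys in return: since $s$ is determined by $\nabla$ alone through \eqrefs{onepoint}{cspan}, your existence argument uses the given pseudoaction ${\mathcal F}$ only through its infinitesimal data; so, modulo Theorem \ref{onepoint}, you have effectively shown that any flat Cartan connection on an integrable Lie algebroid is formally integrated on the source-simply-connected integration --- a statement adjacent to Lie III (Theorem \ref{lie3}) itself, which the paper can only reach after the work of \S\ref{av} and \S\ref{hes}.
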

\noindent% 
We will temporarily need the following addendum (ultimately rendered
redundant by Theorem \ref{lie3} proven in the next section):
\begin{addendum}
  If additionally ${\mathcal F} $ integrates $({\mathfrak g}, \nabla)$, then we may suppose $\tilde
  {\mathcal F} $ does too. 
\end{addendum}
\begin{proof}
  By Lie I and II for Lie groupoids (see, e.g.,
  \cite{Crainic_Fernandes_11}) there exists a source-simply-connected
  Lie groupoid $\tilde G$ covering $G$, unique up to isomorphism. We
  equip $\tilde G$ with the pseudoaction whose leaves are the
  connected components of pre-images of leaves of ${\mathcal F} $
  under the covering map $\tilde G \rightarrow G$. Then $\tilde
  {\mathcal F} $ evidently covers ${\mathcal F}$ and formally
  integrates $({\mathfrak g}, \nabla )$. The addendum is a consequence
  of the preceding proposition.

  If $\tilde {\mathcal F}'$ is a second pseudoaction on $\tilde G$
  formally integrating $({\mathfrak g},\nabla)$ then the identity
  morphism on $\tilde G$ is an isomorphism of pseudoactions $\tilde
  {\mathcal F} \rightarrow \tilde {\mathcal F}' $, by Proposition
  \ref{mk}.
\end{proof}

\section{Integrating a twisted Lie algebra action}\lab{hes}
We now combine the results of \S\ref{av} and \S\ref{morphh} to prove
Theorem \ref{lie3} (Lie III for pseudoactions). Throughout
$({\mathfrak g},\nabla)$ denotes a twisted Lie algebra action on $M$.

\subsection{The integration of $({\mathfrak g},\nabla)$}\lab{yve}%
Let $(\tilde {\mathfrak g} , \tilde \nabla)$ denote the pullback of
$({\mathfrak g},\nabla)$ to the universal cover $\tilde M$ under the
covering map ${f} \colon \tilde M \rightarrow M$. Since $\tilde M $ is
simply-connected, $\tilde {\mathfrak g} $ is naturally isomorphic to
an action algebroid (by Proposition \ref{twot}, for
example). According to Proposition \ref{tuy}, there exists a
source-connected Lie groupoid $\tilde G$ supporting a pseudoaction
$\tilde {\mathcal F} $ that integrates and formally integrates
$(\tilde {\mathfrak g} , \tilde \nabla)$. The former property means
\begin{equation}
  \pseud(\tilde {\mathcal F})=\pseud(\tilde \nabla).\mathlab{hsp}
\end{equation}
By Lie I for pseudoactions (Theorem \ref{mjy}) and its Addendum, we
may take $\tilde G$ to be source-simply-connected.

Next, let $\Lambda $ denote the group of covering transformations of
$\tilde M$. Then $\Lambda $ acts canonically on $\tilde {\mathfrak g}
$ by Lie algebroid isomorphisms, and each such isomorphism respects
the connection $\tilde \nabla $. By Lie II for pseudoactions (Theorem
\ref{mre}), this action lifts to an action of $\Lambda $ on $\tilde G$
by isomorphisms of $\tilde {\mathcal F} $ (Lie groupoid isomorphisms
mapping leaves of $\tilde {\mathcal F} $ to leaves of $\tilde
{\mathcal F} $).  This action is totally discontinuous, since the
action of $\Lambda $ on $\tilde M $ is already totally
discontinuous. The quotient $G :=\tilde G/\Lambda $ is a well-defined
Lie groupoid over $M$ whose Lie algebroid is isomorphic to ${\mathfrak
  g}$. Because $\Lambda$ acts by isomorphisms of $\tilde {\mathcal
  F}$, the pseudoaction $\tilde {\mathcal F} $ drops to a foliation
${\mathcal F} $ on $G$ that is actually a pseudoaction, as is not too
hard to see. The infinitesimalization of ${\mathcal F} $ is evidently
$\nabla $.  The derivative of the covering ${\Pi} \colon \tilde G
\rightarrow G$ is the canonical projection ${\pi} \colon \tilde
{\mathfrak g} \rightarrow {\mathfrak g}$, which evidently respects the
connections $\tilde \nabla $, $\nabla $. By Proposition \ref{mk},
${\Pi} $ maps leaves of $\tilde {\mathcal F} $ into leaves of
${\mathcal F} $. In fact, since ${\Pi} $ covers a local diffeomorphism
of the base manifolds we have:
\begin{conditions}
\item\lab{dff} %
{\it The restriction
of ${\Pi} $ to any leaf of $\tilde {\mathcal F} $ is a local
diffeomorphism into the corresponding leaf of ${\mathcal F}$.}
\end{conditions}

By construction, ${\mathcal F} $ formally integrates $({\mathfrak g},
\nabla)$. We now use \eqref{hsp} and \eqref{dff} to show that
${\mathcal F} $ is a bona fide integration as well, i.e.,
$\pseud({\mathcal F})=\pseud(\nabla)$.
\subsection{Proof that $\pseud(\nabla)\subset\pseud({\mathcal F})$}\lab{txl}%
Let $X$ be a local $\nabla $-parallel section of $\mathfrak g$ and $t
\in {\mathbb R} $ such that $\Phi_{\#X}^t$ has non-empty domain
$U$. As elements of $\pseud(\nabla)$ are generated by transformations
of this form, it will suffice to prove $\Phi_{\#X}^t \in
\pseud({\mathcal F})$. Indeed, it will suffice to construct, for each
$m_0 \in U$, an open neighbourhood $V \subset U$ of $m_0$ such that
$\Phi_{\#X}^t|_V \in \pseud({\mathcal F})$.

Pull the section $X$ back to a section $\tilde X$ of $\tilde
{\mathfrak g} $, and let $\tilde m_0 \in \tilde M$ be a point covering
$m_0$. Shrinking $U \ni m_0$ if necessary, there exists a connected
neighbourhood $\tilde U$ of $\tilde m_0$ mapped diffeomorphically onto
$U$ by the projection ${f} \colon \tilde M \rightarrow M$, and we will
have $\tilde U \subset \domain \Phi^t_{\# \tilde X}$. Again supposing
$U$ to be have been chosen sufficiently small, we have, by
\eqref{hsp}, $\Phi^t_{\# \tilde X}=\beta \circ \tilde b$, for some
local bisection $\tilde b \colon \tilde U \rightarrow \tilde G$
integrating $\tilde {\mathcal F} $.  Put $\tilde g = \tilde b(\tilde
m_0)$. As the restriction of ${\Pi} $ to the leaf of $\tilde {\mathcal
  F} $ through $\tilde g$ is a local diffeomorphism into the leaf of
${\mathcal F} $ through $g={\Pi}(\tilde g)$, there exists an open
neighbourhood $\tilde V \subset \tilde U$ of $\tilde m_0$ covering
some neighbourhood $V \subset U$ of $m_0$, and a local bisection $b
\colon V \rightarrow G$ integrating ${\mathcal F} $, such that ${f}
\colon \tilde M \rightarrow M$ maps $\tilde V$ diffeomorphically onto
$V$, and
\begin{equation}
   b({f}(\tilde m))={\Pi}(\tilde b(\tilde m));\qquad \tilde m \in
    \tilde V.\mathlab{dig}
\end{equation}
But then, for any $\tilde m \in \tilde V$, we have
\begin{equation*}
  \beta(\,b({f}(\tilde m))\,)=\beta(\,{\Pi}(\tilde b(\tilde
  m))\,)={f}(\,\beta(\tilde b(\tilde m))\,)={f}(\Phi^t_{\# \tilde
    X}(\tilde m))=\Phi^t_{\# X}({f}(\tilde m)),
\end{equation*}
the last equality following from the fact that $\# \tilde X $ and $\#
X$ are ${f} $-related. This shows that $\beta \circ b
=\Phi^t_{\#X}|_V$. Since $\beta \circ b \in \pseud({\mathcal F})$, we
have $\Phi^t_{\# X}|_V \in \pseud({\mathcal F})$, as required.

\subsection{Proof that $\pseud({\mathcal F})\subset\pseud(\nabla)$}
Let $\phi \in \pseud({\mathcal F})$. To show $\phi \in \pseud(\nabla)$
is suffices to construct, for each $m_0$ in the domain $U$ of $\phi $,
an open neighbourhood $V \subset U$ of $m_0$ such that $\phi|_V \in
\pseud(\nabla)$. Shrinking $U \ni m_0$ if necessary, we have $\phi =
\beta \circ b$ for some local bisection $b \colon U \rightarrow G$
integrating ${\mathcal F}$. Let $g=b(m_0)$. Then there exists $\tilde
g \in \tilde G$ such that ${\Pi}(\tilde g)=g$, whose source will be
denoted $\tilde m_0 \in \tilde M$; we have ${f}(\tilde m_0)=m_0$. By
\eqrefs{yve}{dff}, there exists an open neighbourhood $\tilde V$ of
$\tilde m_0$, and a local bisection $\tilde b \colon \tilde V
\rightarrow \tilde G$ integrating $\tilde {\mathcal F} $, such that $f
\colon \tilde M \rightarrow M$ maps $\tilde V$ diffeomorphically onto
an open set $V \subset U$, and \eqref{dig} above holds. Since
$\tilde b$ integrates $\tilde {\mathcal F}$, it follows from
\eqrefs{yve}{hsp} (shrinking $\tilde V$ above if necessary) that there
exist $\tilde \nabla $-parallel sections $\tilde X_1,\tilde X_2,
\ldots \tilde X_k$ of $\tilde {\mathfrak g} $, and $t_1, t_2, \ldots,
t_k \in {\mathbb R}$, such that
\begin{equation}
    \beta(\tilde b(\tilde m))=\Phi^{t_1}_{\#\tilde X_1} \circ \Phi^{t_2}_{\#\tilde
      X_2} \circ \cdots \circ \Phi^{t_k}_{\#\tilde X_k}\,(\tilde
    m);\qquad \tilde m \in \tilde V.\mathlab{goolie}
\end{equation}
We may take the sections $\tilde X_j$ to be globally defined ($\tilde \nabla
$ is the canonical flat connection on a trivial bundle).

The following result, whose elementary proof is left to the reader,
sates that the image under $f \colon \tilde M \rightarrow M $ of an
integral curve of a global vector field on $\tilde W $ on $\tilde M$
can be pieced together from integral curves of locally-defined vector
fields on $M$ that are $f$-related to $\tilde W$:
\begin{lemma}
  Let $\tilde W$ be a globally defined vector field on $\tilde M$ and
  suppose $\tilde m_0 \in \domain\Phi^t_{\tilde W}$. Then there exist:
  \begin{conditions}
  \item open sets $\tilde U^1, \tilde U^2, \ldots, \tilde U^n \subset
    \tilde M$ such that $\tilde U^n \subset \domain\Phi^t_{\tilde W} $
    is a neighbourhood of $\tilde m_0$ and ${f}\colon \tilde M
    \rightarrow M$ maps $\tilde U^j$ diffeomorphically onto its image
    $U^j$,
  \item local vector fields $W^1, W^2, \ldots, W^n$ defined
    respectively on $U^1, U^2, \ldots,U^n$, and
  \item numbers $t^1, t^2, \ldots t^n \in {\mathbb R}$,
  \end{conditions}
  such that $\tilde W|_{\tilde U^i} $ is ${f}$-related to $W^i$ and
  \[ f(\Phi^t_{\tilde W}(\tilde m))=\left(\Phi_{W^1}^{t^1}\circ
    \Phi_{W^2}^{t^2} \circ \cdots \circ
    \Phi_{W^n}^{t^n}\right)\,(f(\tilde m)),\qquad \text{for all
    $\tilde m \in \tilde U^n $}.\]
\end{lemma}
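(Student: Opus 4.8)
The plan is to reduce the statement to the elementary fact that $f$-related vector fields have $f$-related flows, applied patch by patch along the integral curve through $\tilde m_0$, and then to glue the patches using compactness. Assume without loss of generality that $t\ge 0$ (the case $t<0$ being symmetric), and consider the integral curve $c\colon[0,t]\to\tilde M$, $c(\tau)=\Phi^\tau_{\tilde W}(\tilde m_0)$, whose image is compact. Since $f\colon\tilde M\to M$ is a covering map, hence a local diffeomorphism, every point of $\tilde M$ has an open neighbourhood mapped diffeomorphically onto its image. Covering $c([0,t])$ by such neighbourhoods and invoking a Lebesgue-number argument, I would choose a partition $0=s_0<s_1<\dots<s_n=t$ and open sets $\tilde V_1,\dots,\tilde V_n\subset\tilde M$, each mapped diffeomorphically by $f$ onto its image, such that $c([s_{j-1},s_j])\subset\tilde V_j$.

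Reversing the index to match the order of composition, I would set $\tilde U^j:=\tilde V_{n+1-j}$, $U^j:=f(\tilde U^j)$, $t^j:=s_{n+1-j}-s_{n-j}$, and let $W^j:=f_*(\tilde W|_{\tilde U^j})$ be the pushforward under the diffeomorphism $f|_{\tilde U^j}\colon\tilde U^j\to U^j$. By construction $\tilde W|_{\tilde U^j}$ is $f$-related to $W^j$, so $f$ carries integral curves of $\tilde W$ lying in $\tilde U^j$ to integral curves of $W^j$; equivalently $f(\Phi^s_{\tilde W}(\tilde p))=\Phi^s_{W^j}(f(\tilde p))$ whenever the $\tilde W$-trajectory from $\tilde p$ stays in $\tilde U^j$ over the relevant time interval. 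Note that $t^1+\dots+t^n=t$ and that $\tilde U^n=\tilde V_1$ is a neighbourhood of $\tilde m_0=c(0)$.

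It then remains to produce the neighbourhood $\tilde U^n$ on which the telescoping identity holds. Writing $\Phi^t_{\tilde W}=\Phi^{t^1}_{\tilde W}\circ\cdots\circ\Phi^{t^n}_{\tilde W}$ via the one-parameter group property, the successive partial flows carry $\tilde m_0$ through the points $c(s_1),\dots,c(s_{n-1})$, each intervening arc lying in the corresponding $\tilde U^j$. By continuous dependence of the flow on initial conditions, together with the openness of the flow domain, I would shrink $\tilde U^n$ around $\tilde m_0$ so that for every $\tilde m\in\tilde U^n$ the partial orbits $\tilde m^{(j)}:=\Phi^{t^{j+1}}_{\tilde W}\circ\cdots\circ\Phi^{t^n}_{\tilde W}(\tilde m)$ (with $\tilde m^{(n)}=\tilde m$ and $\tilde m^{(0)}=\Phi^t_{\tilde W}(\tilde m)$) satisfy $\tilde m^{(j)},\tilde m^{(j-1)}\in\tilde U^j$ with the intervening arc in $\tilde U^j$. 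Applying the patchwise relation $f(\tilde m^{(j-1)})=\Phi^{t^j}_{W^j}(f(\tilde m^{(j)}))$ for $j=n,n-1,\dots,1$ and composing yields
\[ f(\Phi^t_{\tilde W}(\tilde m))=\left(\Phi^{t^1}_{W^1}\circ\cdots\circ\Phi^{t^n}_{W^n}\right)(f(\tilde m)),\qquad \tilde m\in\tilde U^n, \]
as required.

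The only delicate point is this last step: arranging a single neighbourhood $\tilde U^n$ of $\tilde m_0$ on which all the intermediate flow values simultaneously land in the prescribed patches at the prescribed times. This is a routine but slightly fussy \emph{flow-box chaining} argument, requiring uniform control over finitely many successive flows; it rests entirely on compactness of $c([0,t])$ and continuity of flows, and involves no groupoid theory.
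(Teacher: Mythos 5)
Your proof is correct, and it fills in exactly the kind of argument the paper intends: the paper itself gives no proof of this lemma (it is explicitly ``left to the reader''), and the evident elementary route is the one you take --- cover the compact orbit segment $c([0,t])$ by finitely many patches that $f$ maps diffeomorphically, push $\tilde W$ forward on each patch, and chain the flows, using the tube-lemma/continuous-dependence argument to get a single neighbourhood $\tilde U^n$ of $\tilde m_0$ on which all intermediate arcs stay in their prescribed patches. The one step you flag as delicate is indeed the only point requiring care, and your sketch of it (openness of the flow domain plus compactness of each time interval $[s_{j-1},s_j]$) closes it.
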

\noindent%
With the help of this lemma (again shrinking $\tilde V$ if necessary)
one can find open subsets $\tilde U_i^j \subset M$ ($1 \le i \le k$,
$1\le j\le n_i$) and local sections $X_i^j$ of ${\mathfrak g} $,
defined on $U_i^j :=f(\tilde U_i^j) \subset M$, such that $\tilde
U_k^{n_k}= \tilde V$, $\pi (\tilde X_i(\tilde m))=X_i^j(f(\tilde m))$
for all $\tilde m \in \tilde U_i^j$, and
\begin{equation}
    f(\Phi^{t_i}_{\#\tilde X_i}(\tilde m))=\left(\Phi_{\#X_i^1}^{t_i^1}\circ
      \Phi_{\#X_i^2}^{t_i^2} \circ \cdots \circ
      \Phi_{\#X_i^{n_i}}^{t_i^{n_i}}\right)\,(f(\tilde m)); \qquad \tilde m \in
    \tilde U_i^n,\mathlab{jsl}
\end{equation}
for $1\le i \le k$.  

Now let $\tilde m \in \tilde V=U_k^{n_k}$ be arbitrary. Then
\begin{equation*}
    \phi(f(\tilde m))=\beta(\,b({f}(\tilde m))\,)={f}(\,\beta(\tilde b(\tilde m))\,)=
    f \Big(\,\Phi^{t_1}_{\#\tilde X_1} \circ \Phi^{t_2}_{\#\tilde
      X_2} \circ \cdots \circ \Phi^{t_k}_{\#\tilde X_k}\,(\tilde
    m)\,\Big),
\end{equation*}
where the second equality follows from \eqrefs{txl}{dig}, and the
third from \eqref{goolie}.  Finally, employing \eqref{jsl} for each
$i$ between $1$ and $k$, we obtain
\begin{multline*}
  \phi({f}(\tilde m))=\Big( \Phi_{\#X_1^1}^{t_1^1}\circ
  \Phi_{\#X_1^2}^{t_1^2} \circ \cdots \circ
  \Phi_{\#X_1^{n_1}}^{t_i^{n_1}}\\ \circ \Phi_{\#X_2^1}^{t_2^1}\circ
  \Phi_{\#X_2^2}^{t_2^2} \circ \cdots \circ
  \Phi_{\#X_i^{n_2}}^{t_2^{n_2}} \circ \cdots\\ \circ
  \Phi_{\#X_k^1}^{t_k^1}\circ \Phi_{\#X_k^2}^{t_k^2} \circ \cdots
  \circ \Phi_{\#X_k^{n_k}}^{t_i^{n_k}}\Big)(f(\tilde m)).
\end{multline*}
Since $\tilde m \in \tilde V$ is arbitrary, the formula holds with
$f(\tilde m)$ replaced with $m$, with $m \in V=f(\tilde V)$
arbitrary. We conclude $\phi|_V \in\pseud(\nabla)$.

\subsection{Proof that formal integrations are integrations} To
complete the proof of Theorem \ref{lie3} it remains to show that any
formal integration ${\mathcal F}' $ of $({\mathfrak g}, \nabla)$ is a
bona fide integration, assuming the Lie groupoid $G'$ supporting
${\mathcal F}'$ is source-connected.

Let ${\mathcal F} $ be the pseudoaction constructed in
\S\ref{yve}. Then ${\mathcal F} $ formally integrates $({\mathfrak
  g},\nabla)$ and, as we have just shown in the preceding sections,
integrates $({\mathfrak g},\nabla)$ also:
\begin{equation}
  \pseud({\mathcal F})=\pseud(\nabla).\mathlab{ovr}
\end{equation}
Moreover, ${\mathcal F} $ is supported by a Lie groupoid $G$ that is
source-simply-connected. It follows from Lie II for pseudoactions
(Theorem \ref{mre}) that there exists a Lie groupoid morphism $\Pi
\colon G \rightarrow G'$ whose derivative is the identity on
${\mathfrak g}$ and that is a morphism of pseudoactions ${\mathcal F}
\rightarrow {\mathcal F}'$, and whence a covering of pseudoactions in
the sense of \S\ref{mjy}, because we suppose $G'$ is
source-connected. Applying Proposition \ref{mjy}, we have
$\pseud({\mathcal F})=\pseud({\mathcal F}')$. Consequently,
\eqref{ovr} implies $\pseud({\mathcal F}')=\pseud(\nabla)$, i.e.,
${\mathcal F}'$ integrates $({\mathfrak g},\nabla)$.

\section{Integrating a complete twisted Lie algebra action}\lab{onu}
This section is devoted to the proof of Theorem \ref{twisted},
beginning with the untwisted case, first proven by Palais.

\subsection{Integrating a complete Lie algebra action}
Once again let ${\mathfrak g}_0 $ be a finite-dimensional Lie algebra
acting smoothly from the left on $M$, and denote the corresponding Lie
algebra homomorphism ${\mathfrak g}_0 \rightarrow \Gamma(TM)$ by $\xi
\mapsto \xi^\dagger$. The simply-connected Lie group with Lie algebra
${\mathfrak g}_0 $ will be denoted by $G_0$.  We let $\nabla $ denote
the canonical flat connection on ${\mathfrak g}_0 \times M$.

\begin{theorem}[Palais global integrability theorem \cite{Palais_57a}]
  Suppose that $\xi^\dagger$ is a complete vector field for every $\xi
  \in {\mathfrak g}_0$. Then there is a global action of $G_0$ on $M$
  whose infinitesimal generators are $\xi^\dagger$, $\xi \in
  {\mathfrak g}_0$. Moreover, if $\phi_g \colon M \rightarrow M$
  denotes the global transformation corresponding to $g \in G_0$, then
  every $\phi \in \pseud(\nabla)$ with connected domain $U$ is of the
  form $\phi=\phi_g|_U$, for some $g \in G_0$.
\end{theorem}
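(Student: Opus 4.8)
The plan is to reuse the source-connected Lie groupoid $G$ integrating $\mathfrak g_0 \times M$ furnished by Proposition \ref{tuy}, together with its morphism $\Omega \colon G \to G_0$ (Theorem \ref{concrete}) and its pseudoaction $\mathcal F$, whose leaves are the connected components of the fibres of $\Omega$ and which both integrates and formally integrates $(\mathfrak g_0 \times M, \nabla)$. The heart of the matter is to show that, under completeness, the map $\Theta := \Omega \times \alpha \colon G \to G_0 \times M$ (with $\alpha$ the source projection) is a diffeomorphism. Granting this, $G$ is identified with the action groupoid $G_0 \times M$ so that $\alpha$ becomes the projection to $M$ and $\Omega$ the projection to $G_0$; the target $\beta$ then yields a global left action via $\phi_g(m) := \beta(\Theta^{-1}(g,m))$.

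To prove $\Theta$ is bijective it suffices to show that $\Omega$ restricts to a bijection on each source fibre $G^m := \alpha^{-1}(m)$. First, completeness of the $\xi^\dagger$ lifts to $G$: by \eqref{bsj2}, $\domain \Phi^t_{\xi_\mathrm c^\mathrm R} = \beta^{-1}(\domain \Phi^t_{\xi^\dagger}) = G$, so each right-invariant field $\xi_\mathrm c^\mathrm R$ is complete. These fields are tangent to the source fibres, and by right-invariance the vectors $\xi_\mathrm c^\mathrm R(p)$, $\xi \in \mathfrak g_0$, form a frame of $T_pG^m$ at every $p$; by the $\Omega$-relatedness noted in the proof of \eqref{identity}, $\Omega$ carries this frame to the right-invariant frame on $G_0$, whose flows are the left translations $g_0 \mapsto \exp(t\xi)g_0$. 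In particular $\Omega|_{G^m}$ is a local diffeomorphism, and the standard flow-box argument, using the completeness just established together with the transitivity of these left translations on the connected group $G_0$, shows $\Omega|_{G^m}$ is a surjective covering map. Since $G^m$ is connected ($G$ being source-connected) and $G_0$ is simply connected, this covering is trivial, i.e.\ $\Omega|_{G^m}$ is a diffeomorphism. This covering step, where completeness and the simple-connectivity of $G_0$ are both essential, is the main obstacle; everything else is formal.

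With $\Theta$ a diffeomorphism, the transported groupoid multiplication on $G_0 \times M$ is forced by the requirements that $\alpha$ be the projection to $M$, that $\Omega$ be the projection to $G_0$, and that $\Omega$ be a morphism into $G_0$; it therefore coincides with action-groupoid multiplication, so that $\phi_{\identity} = \identity_M$ and $\phi_h \circ \phi_g = \phi_{hg}$ hold globally. Differentiating $\phi_{\exp(t\xi)}$ and using \eqref{identity} and \eqref{bsj1} identifies the infinitesimal generators of $\phi$ with the $\xi^\dagger$. Finally, because $\Theta$ is a diffeomorphism the fibres of $\Omega$ are precisely the sets $\{g\}\times M$, which are connected since $M$ is connected; hence the leaves of $\mathcal F$ are exactly these, and $\pseud(\mathcal F)$ consists of the local transformations agreeing on connected sets with some global $\phi_g$. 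Given $\phi \in \pseud(\nabla) = \pseud(\mathcal F)$ (Proposition \ref{tuy}) with connected domain $U$, local uniqueness of integrating bisections (Proposition~\eqrefs{terminology}{uniqueness}) together with connectedness of $U$ pins down a single $g$ with $\phi = \phi_g|_U$, completing the proof.
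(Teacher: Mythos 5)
Your proposal is correct, and its core coincides with the paper's: both proofs hinge on showing that $\Omega \times \alpha \colon G \rightarrow G_0 \times M$ is a diffeomorphism, the crux being that completeness forces the restriction of $\Omega$ to each source fibre to be a covering map onto $G_0$, hence a diffeomorphism; the groupoid structure is then transported to $G_0 \times M$, where it is necessarily action-groupoid multiplication, and the infinitesimal generators are identified via \eqref{identity} and \eqref{bsj1}, exactly as you argue. The genuine differences are in the supporting machinery. You build everything on the explicit groupoid and morphism of \S\ref{concrete}--\S\ref{tuy}, so you need only source-connectedness of $G$, and it is the simple-connectedness of $G_0$ (together with connectedness of the source fibre) that trivializes the covering; the paper instead takes $G$ source-simply-connected via Lie III and Lie I for pseudoactions (Theorems \ref{lie3} and \ref{mjy}), produces $\Omega$ by Lie II for pseudoactions (Theorem \ref{mre}) --- which is what forces the source-simple-connectedness hypothesis --- and recovers the leaves as components of $\Omega$-fibres by a dimension count. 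Your route buys independence from the apparatus of \S\ref{morphh}; the paper's buys consistency with its general integration theorems and an isomorphism of pseudoactions via Proposition \ref{mk}. Second, for the covering step the paper pulls the canonical flat connection of $G_0$ back to the source fibre and invokes the completeness criterion of \S\ref{uniformity}; your ``standard flow-box argument'' is the same argument in flow language (geodesics of that flat connection are precisely the integral curves of the right-invariant fields), but as written it is asserted rather than proved, and it is exactly the content the paper isolates in its appendix. In a complete write-up you should either reproduce the even-covering construction (complete lifts of a right-invariant frame give disjoint sheets over flow-out neighbourhoods; flow-invariance of the image plus transitivity of left translations gives surjectivity) or cite the proposition of \S\ref{uniformity}.

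One caution on your final step: the local uniqueness statement \eqref{uniqueness} of Proposition \ref{terminology} propagates agreement of two integrating bisections only once they \emph{agree at a point of $G$}; knowing merely that their compositions with $\beta$ agree near a point does not place them in the same leaf, since some $\phi_g$ with $g \neq \identity$ could restrict to the identity on a nonempty open set. So the locally defined label $m \mapsto g$ is not automatically single-valued, and connectedness of $U$ alone does not pin down one $g$. The paper glosses this same point (``the theorem follows readily from the existence of this isomorphism''), so your treatment is no less complete than the paper's, but the step deserves more care than either text supplies.
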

\noindent%
Recall here that $\pseud(\nabla)$ is the pseudogroup of
transformations generated by flows of infinitesimal generators.
\begin{proof}
  According to Lie III and Lie I for pseudoactions (Theorems
  \ref{lie3} and \ref{mjy}) there exists a pseudoaction ${\mathcal
    F}$, supported by a source-simply-connected Lie groupoid $G$, that
  both integrates and formally integrates $({\mathfrak g}_0 \times
  M,\nabla)$. We will construct a global action of $G_0$ on $M$ and a
  Lie groupoid isomorphism $G \rightarrow G_0 \times M$
  that is also an isomorphism of pseudoactions ${\mathcal F}
  \rightarrow {\mathcal F}_0$. Here $G_0 \times M$ denotes the
  corresponding action groupoid and ${\mathcal F}_0$ the canonical
  pseudoaction on $G_0 \times M$. The theorem follows readily from the
  existence of this isomorphism.

  Applying Lie II for pseudoactions (Theorem \ref{mre}), there exists
  a Lie groupoid morphism $\Omega \colon G \rightarrow G_0$
  integrating the canonical projection $\omega \colon {\mathfrak g}_0
  \times M \rightarrow {\mathfrak g}_0 $, and this morphism maps
  leaves of ${\mathcal F} $ to points. A dimension count implies that
  the leaves of ${\mathcal F}$ are connected components of fibres of
  $\Omega$. Let $P \subset G$ denote an arbitrary source-fibre of
  $G$. Then as $\omega $ is a fibre-wise isomorphism, the restriction
  $\Omega \colon P \rightarrow G_0$ is a local diffeomorphism. The
  essential point is:
  \begin{lemma}
    The restriction of $\Omega \colon G \rightarrow G_0$ to any
    source-fibre $P$ is a diffeomorphism onto $G_0$.
  \end{lemma}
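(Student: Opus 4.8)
The plan is to show that $\Omega|_P \colon P \to G_0$ is a covering map and then to invoke the simple-connectivity of $G_0$ to upgrade this to a diffeomorphism. We already know that $\Omega|_P$ is a local diffeomorphism, so it will remain to produce an evenly covered neighbourhood around each point of $G_0$, and to check surjectivity. The decisive input is the completeness hypothesis: since every $\xi^\dagger$ is complete, \eqrefs{loc}{bsj2} shows that each right-invariant field $\xi_\mathrm{c}^\mathrm{R}$ is complete on $G$, and, being tangent to the source fibres, it restricts to a complete field on $P$. Thus each $\Phi^t_{\xi_\mathrm{c}^\mathrm{R}}$ is a \emph{globally defined} diffeomorphism of $P$, and by \eqrefs{loc}{identity} it is $\Omega$-equivariant, $\Omega(\Phi^t_{\xi_\mathrm{c}^\mathrm{R}}(p)) = \exp(t\xi)\,\Omega(p)$ for all $p \in P$.

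First I would record surjectivity. The identity arrow of $G$ at $m_0$ lies in $P$ and is sent by $\Omega$ to the identity of $G_0$. Given $g_0 \in G_0$, write $g_0 = \exp(\xi_1)\cdots\exp(\xi_k)$, which is possible because $G_0$ is connected, and apply the corresponding complete flows to this unit; the equivariance relation then produces a point of $P$ mapping to $g_0$.

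The heart of the argument is even covering, and here completeness is used once more. Fix $g_0 \in G_0$. Using the exponential chart at the identity, every point in a small neighbourhood of $g_0$ is uniquely of the form $\exp(\zeta)g_0$ for small $\zeta \in {\mathfrak g}_0$, with $\zeta$ depending smoothly on the point. For each $x$ in the discrete fibre $\Omega|_P^{-1}(g_0)$ I would define a local section by $\sigma_x(\exp(\zeta)g_0) := \Phi^1_{\zeta_\mathrm{c}^\mathrm{R}}(x)$; this is well-defined precisely because the flows are complete, it is smooth, and the equivariance relation gives $\Omega(\sigma_x(\exp(\zeta)g_0)) = \exp(\zeta)g_0$, so that $\sigma_x$ is a genuine section over the neighbourhood $N_{g_0} := \{\exp(\zeta)g_0\}$. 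Because each $\Phi^1_{\zeta_\mathrm{c}^\mathrm{R}}$ is injective, distinct $x$ yield sections with disjoint images; and given any $y \in \Omega|_P^{-1}(N_{g_0})$, applying the inverse flow $\Phi^{-1}_{\zeta_\mathrm{c}^\mathrm{R}}$ (with $\zeta$ read off from $\Omega(y)$) lands in the fibre over $g_0$ and exhibits $y$ in the image of the corresponding section. Hence $\Omega|_P^{-1}(N_{g_0})$ is the disjoint union of the images of the $\sigma_x$, each mapped diffeomorphically onto $N_{g_0}$; that is, $N_{g_0}$ is evenly covered.

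With surjectivity and even covering established, $\Omega|_P$ is a covering map. Since $G$ is source-connected, $P$ is connected, and since $G_0$ is simply-connected, a connected covering of $G_0$ must be single-sheeted; therefore $\Omega|_P$ is a diffeomorphism onto $G_0$. I expect the even-covering step to be the main obstacle, and the crucial point to get right is that completeness of the infinitesimal generators $\xi^\dagger$ is exactly what promotes the flows $\Phi^t_{\xi_\mathrm{c}^\mathrm{R}}$ to global diffeomorphisms of $P$; without this the translating sections $\sigma_x$ could fail to be defined on a full neighbourhood, and the local diffeomorphism need not be a covering.
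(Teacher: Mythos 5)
Your proof is correct, but it takes a genuinely different route from the paper's. Both arguments begin identically: completeness of the fields $\xi^\dagger$ is transferred, via \eqref{bsj2}, to completeness of the right-invariant fields $\xi_\mathrm{c}^\mathrm{R}$ restricted to $P$, and the equivariance \eqref{identity} is the other essential input. From there the paper changes language: it pulls back the canonical flat connection on $G_0$ (the one for which right-invariant fields are parallel) along the local diffeomorphism $\Omega \colon P \rightarrow G_0$, observes that the geodesics of the pullback connection are exactly the (complete) integral curves of the fields $\xi_P$, and then invokes the general Proposition \ref{uniformity} of the Appendix --- a local diffeomorphism into a connected manifold that pulls some connection back to a complete connection is a covering map --- before finishing, as you do, with simple-connectivity. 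You instead prove the covering property by hand, constructing over each neighbourhood $N_{g_0}=\exp(U)g_0$ the translating sections $\sigma_x(\exp(\zeta)g_0)=\Phi^1_{\zeta_\mathrm{c}^\mathrm{R}}(x)$ and verifying even covering directly. The two proofs are close relatives: the geodesics of the flat connection through $g_0$ are precisely the curves $t\mapsto\exp(t\zeta)g_0$, and their lifts through $x$ are your flow lines, so your $\sigma_x$ are exactly the sections $s_x$ built in the appendix proof, specialized to this situation. What the paper's factorization buys is modularity (a standalone statement about connections, proved once in the appendix); what your inlined argument buys is self-containedness --- the appendix is bypassed entirely, and your explicit surjectivity step (writing $g_0$ as a product of exponentials and flowing the unit arrow of $P$) replaces what the paper absorbs into the covering-map conclusion. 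One small point in your favour: at the last step you correctly invoke simple-connectivity of $G_0$ (a connected cover of a simply-connected space is single-sheeted), whereas the paper's phrasing appeals to simple-connectivity of $P$, which by itself only identifies $P$ as the universal cover of $G_0$; it is the simple-connectivity of $G_0$ that closes the argument in either formulation.
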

  \noindent%
  Postponing the proof of the lemma, we construct an action of $G_0$
  on $M$ as follows. First note that the smooth map $\Omega \times
  \alpha \colon G\rightarrow G_0 \times M$ has full rank and so is a
  local diffeomorphism; here $\alpha$ denotes the source map. But from
  the surjectivity of $\alpha \colon G \rightarrow M$ and the lemma we
  readily see that $\Omega \times \alpha $ is bijective, and hence a
  diffeomorphism. Let $E \colon G_0 \times M \rightarrow G$ be the
  inverse diffeomorphism and --- as in the proof of Palais' local
  integrability theorem \S\ref{hgn} --- define
  $\phi_g(m)=\beta(E(g,m))$, where $\beta $ is the target map. The
  proof that this defines an action of $G_0$ on $M$ is the same as for
  the local result (read from Equation \eqrefs{hgn}{trt}, replacing
  both $Z$ and $Z_\mathrm{big}$ with $G$). This action makes $G_0
  \times M $ into a Lie groupoid; it follows by construction that the
  diffeomorphism $\Omega \times \alpha \colon G \rightarrow G_0 \times
  M$ is compatible with the respective source and target maps. That
  $\Omega \times \alpha $ is Lie groupoid morphism now follows
  immediately from the fact that $\Omega \colon G \rightarrow G_0$
  is. The derivative of $\Omega \times \alpha $ is just the identity
  on ${\mathfrak g}_0 \times M$. Since ${\mathcal F}_0$ formally
  integrates $({\mathfrak g}_0 \times M, \nabla)$, it follows from
  Proposition \ref{mk} that $\Omega$ is in fact an isomorphism of
  pseudoactions $\Omega \colon {\mathcal F} \rightarrow {\mathcal
    F}_0$.
\end{proof}

\begin{proof}[Proof of lemma]
  For each $\xi \in {\mathfrak g}_0$, let $\xi_{\mathrm c}$ denote the
  corresponding constant section of ${\mathfrak g}_0 \times M$.  Let
  $\xi_P$ denote the restriction of the corresponding right-invariant
  vector field on $G$ to one on $P$ (in earlier notation,
  $\xi_P=\xi^\mathrm{R}_\mathrm{c}|_P$). Then, by Lemma \ref{loc},
  there exists through each point $g \in P$ an integral path of
  $\xi_P$, beginning at $g$ and covering the integral path of
  $\#\xi_{\mathrm c}$ beginning at $\beta (g)$; here $\beta \colon P
  \rightarrow M$ denotes the target projection, and is the map with
  respect to which `covering' is to be understood. Since $\#
  \xi_{\mathrm c} $ is complete, by hypothesis, it follows that
  $\xi_P$ is complete also.

  We now interpret the vector field completeness in terms of
  connections. Since $\Omega \colon G \rightarrow G_0$ is a groupoid
  morphism, $\xi_P$ is $\Omega$-related to the right-invariant vector
  field on $G_0$ generated by $\xi \in {\mathfrak g}_0$. Now let
  $\nabla $ denote the canonical flat connection on $G_0$ with respect
  to which every right-invariant vector field is parallel, and pull
  $\nabla $ back to a flat connection $\nabla^P$ on $P$ using the
  local diffeomorphism $\Omega \colon P\rightarrow G_0$. Then, by
  construction, the geodesics of $\nabla^P$ are the integral curves of
  $\xi_P$, which are complete. It follow from  Proposition
  \S\ref{uniformity} given in the Appendix that the local
  diffeomorphism $\Omega \colon P \rightarrow G_0$ is a covering
  space. Since $P$ is simply-connected ($G$ is
  source-simply-connected) the lemma follows.
\end{proof}

\subsection{Proof of Theorem \ref{twisted}} We now prove our
generalization of Palais' global integrability theorem to 
twisted Lie algebra actions.

Let $({\mathfrak g},\nabla)$ denote an arbitrary twisted Lie algebra
action on $M$. According to Proposition \ref{twot}, ${\mathfrak g} $
can be identified with ${\mathfrak g}_0 \times_\mu M$, where $\mu
\colon \Lambda \rightarrow \automorphism({\mathfrak g}_0)$ is the
monodromy representation, and ${\mathfrak g}_0 $ is a Lie algebra
acting on the universal cover $\tilde M$. Since $\nabla $ is complete,
${\mathfrak g}_0$ acts on $\tilde M$ by complete vector fields, and so
Palais' global integrability theorem applies. According to the proof
of that theorem above (read with $M$ replaced with $\tilde M$), there
exists an action of $G_0$ on $\tilde M$ such that the canonical
pseudoaction $\tilde {\mathcal F}_0$ on the action groupoid $G_0
\times \tilde M$ formally integrates $({\mathfrak g}_0 \times \tilde
M, \tilde \nabla )$. Here $\tilde \nabla $ denotes the canonical flat
Cartan connection on ${\mathfrak g}_0 \times \tilde M$.

On the other hand, Remark \ref{twot} tells us that ${\mathfrak g}_0
\times \tilde M$ is naturally isomorphic to the formal pullback
$\tilde {\mathfrak g} $ of ${\mathfrak g} $, with the canonical action
of $\Lambda $ on $\tilde {\mathfrak g} $ being represented by the
action $\lambda \cdot (\xi,\tilde m)=(\mu_\lambda \xi,\lambda(\tilde
m))$. According to the constructions of \S\ref{hes}, $({\mathfrak g},
\nabla)$ is formally integrated by a pseudoaction ${\mathcal F} $ on
$G$, where $G=\tilde G/\Lambda $, $\tilde G$ is any
source-simply-connected Lie groupoid formally integrating (and hence
integrating) $\tilde {\mathfrak g} $, and the action of $\Lambda $ on
$\tilde G$ is obtained by lifting its canonical representation on
$\tilde {\mathfrak g} $ using Lie II for Lie groupoids. But according
to the preceding paragraph, we may take $\tilde G = G_0 \times \tilde
M$, in which case we see that the action of $\Lambda $ on $\tilde G =
G_0 \times \tilde M$ must be given by $\lambda \cdot (g,\tilde
m)=(\nu_\lambda (g), \lambda(m))$, where the homomorphism $\nu \colon
\Lambda \rightarrow \automorphism(G_0) $ is the lift of the monodromy
representation $\mu \colon \Lambda \rightarrow
\automorphism({\mathfrak g}_0)$ obtained by applying Lie II for Lie
groups. So we have $G = \tilde G/\Lambda =G_0\times_\nu M$.

Recall also from \S\ref{hes}, that the pseudoaction ${\mathcal F}$ on
$G$ formally integrating $({\mathfrak g},\nabla)$ is obtained by
dropping any pseudoaction $\tilde {\mathcal F} $ on $\tilde G$ that
formally integrates (and hence integrates) the pullback $(\tilde
{\mathfrak g}, \tilde \nabla )$ of $({\mathfrak g}, \nabla)$ (such a
drop being well-defined by construction). In the present case $\tilde
G =G_0 \times \tilde M$, $\tilde {\mathfrak g} = {\mathfrak g}_0
\times \tilde M$, and $\tilde \nabla $ is the canonical flat
connection; so the uniqueness part of Lie I for pseudoactions (Theorem
\ref{mjy}) implies $\tilde {\mathcal F} = \tilde {\mathcal F}_0$. It
follows that the pseudoaction on $G = \tilde G/\Lambda =G_0\times_\nu
M$ formally integrating $({\mathfrak g}, \nabla)$ is the canonical one
${\mathcal F}_0$. This completes the proof that $({\mathfrak
  g},\nabla)$ is formally integrated by a twisted Lie group action.
\noindent%

\appendix
\section{Technical details}
\subsection{Localization of multiplication in a Lie groupoid}
\lab{gpmu}%
We prove here the following result stated in \S\ref{hgn}:
\begin{proposition}
  Let $Z_\mathrm{big} \subset G$ be a paracompact, Hausdorff, open
  neighbourhood of $M$ in a Lie groupoid $G$ over $M$. Then there
  exists a neighbourhood $Z \subset Z_\mathrm{big}$ of $M$ such that
  for all $h,g \in Z$ we have $hg \in Z_\mathrm{big}$, whenever $h$
  and $g $ are multipliable.
\end{proposition}
\noindent%
In the proof we will make use of the following:
\begin{lemma}
  Let $U$ be a metric space and $\{U_i\}_{i \in I}$ an open cover of
  $U$. Then there exists a mapping $g \mapsto V_g$ of $U$ into the
  collection of open subsets of $U$ such that: (i) $g \in V_g$, and
  (ii) $V_{g_1}\cap V_{g_2}\ne \emptyset$ implies $V_{g_1}\cup V_{g_2}
  \subset U_i$, for some $i \in I$. Here $g,g_1,g_2 \in X$ are
  arbitrary.
\end{lemma}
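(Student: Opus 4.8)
The plan is to realize each $V_g$ as a metric ball $B(g, r(g))$ whose radius is a fixed fraction of a pointwise ``Lebesgue radius'' adapted to the cover. Writing $d$ for the metric on $U$, set for each $i \in I$
\[
\tau_i(g) := d\bigl(g,\, U \setminus U_i\bigr),
\]
with the convention $d(g,\emptyset) = +\infty$, so that the open ball $B(g,\tau_i(g))$ is contained in $U_i$. I would then define
\[
\rho(g) := \min\Bigl(1,\ \sup_{i \in I} \tau_i(g)\Bigr), \qquad V_g := B\bigl(g, \tfrac14 \rho(g)\bigr).
\]
Since $\{U_i\}$ covers $U$ and each $U_i$ is open, every $g$ lies in some $U_i$ with $\tau_i(g) > 0$, so $\rho(g) > 0$ and $V_g$ is a genuine open neighbourhood of $g$; this settles condition (i) at once. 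The cap at $1$ is there only to keep $\rho(g)$ finite.

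For condition (ii), suppose $V_{g_1}\cap V_{g_2}\neq\emptyset$ and, without loss of generality (the conclusion is symmetric in the two points), that $\rho(g_1)\le\rho(g_2)$. Picking a common point of the two balls and applying the triangle inequality gives $d(g_1,g_2) < \tfrac14\rho(g_1) + \tfrac14\rho(g_2) \le \tfrac12\rho(g_2)$. A second application shows that every point of $V_{g_1}$ lies within $\tfrac14\rho(g_1) + \tfrac12\rho(g_2) \le \tfrac34\rho(g_2)$ of $g_2$, while every point of $V_{g_2}$ lies within $\tfrac14\rho(g_2)$ of $g_2$; hence
\[
V_{g_1}\cup V_{g_2}\ \subset\ B\bigl(g_2,\, \tfrac34\rho(g_2)\bigr).
\]
The whole purpose of the shrinking factor $\tfrac14$ is that $\tfrac34\rho(g_2) < \rho(g_2) \le \sup_i \tau_i(g_2)$ \emph{strictly}, so by definition of the supremum there is an index $i$ with $\tau_i(g_2) > \tfrac34\rho(g_2)$, whence $B(g_2, \tfrac34\rho(g_2)) \subset B(g_2,\tau_i(g_2)) \subset U_i$. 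This $U_i$ is the one required by (ii).

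The main subtlety, and the reason a single uniform Lebesgue number does not suffice, is that $U$ is not assumed compact, so the radius must genuinely depend on $g$, and two failure modes must be guarded against. First, $\sup_i \tau_i(g)$ can be $+\infty$ (e.g.\ $U = \mathbb{R}$ covered by the intervals $(-n,n)$), which is exactly why I cap $\rho$ at $1$. Second, and more delicately, the supremum defining $\rho(g_2)$ need not be attained, so the ball of radius \emph{exactly} $\rho(g_2)$ about $g_2$ need not lie in any single $U_i$; this is precisely what forces the strict inequality $\tfrac34\rho(g_2) < \rho(g_2)$, and any constant $c$ with $3c < 1$ would serve in place of $\tfrac14$. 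A Lipschitz estimate $\lvert\rho(g)-\rho(g')\rvert \le d(g,g')$ is also available and gives an alternative packaging, but it is not needed: the triangle-inequality bookkeeping above is self-contained.
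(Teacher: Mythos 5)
Your proof is correct, and it takes a genuinely different route from the paper's. The paper first invokes paracompactness of metric spaces (A.~H.~Stone's theorem) to reduce, without loss of generality, to a locally finite cover; point-finiteness then lets it choose, for each $g$, a single radius $\epsilon_g$ with $B_{3\epsilon_g}(g)\subset U_i$ for \emph{every} $i$ containing $g$, and the rest is the same triangle-inequality bookkeeping you use (with the smaller of the two radii playing the role your smaller $\rho$ plays, and the factor $3$ in place of your factor $4$). Your construction sidesteps paracompactness entirely: the pointwise Lebesgue radius $\tau_i(g)=d(g,U\setminus U_i)$ and the capped supremum $\rho(g)=\min(1,\sup_i\tau_i(g))$ give an explicit, self-contained choice of $V_g$, at the modest cost of the extra care you correctly identify --- the supremum need not be attained, so one needs the strict inequality $\tfrac34\rho(g_2)<\rho(g_2)\le\sup_i\tau_i(g_2)$ to extract a single index $i$ with $\tau_i(g_2)>\tfrac34\rho(g_2)$, and the cap at $1$ to keep that strict inequality meaningful when $\sup_i\tau_i(g)=+\infty$. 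What the paper's approach buys is brevity once a nontrivial external theorem is granted; what yours buys is elementarity --- nothing beyond the definition of a metric and of an open cover is used --- which is arguably better suited to a statement this basic.
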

\begin{proof}[Proof of lemma]
  Since every metric space is paracompact, we may suppose, without
  loss of generality, that the cover $\{U_i\}_{i \in I}$ is locally
  finite. Then, for each $g \in U$, we can find $\epsilon_g >0$ small
  enough that for all $i \in I$,
  \begin{equation}
    B_{3\epsilon_g}(g) \subset U_i\quad\text{whenever $g \in U_i$.}\mathlab{eld}
  \end{equation}
  Here $B_r(g)$ denotes the open ball of radius $r$ centred at $g$. To
  show the sets $V_g := B_{\epsilon_g}(g)$ meet the requirements of
  the lemma, suppose $g_0 \in V_{g_1}\cap V_{g_2}$. Suppose
  $\epsilon_{g_2}\le \epsilon_{g_1}$, and choose any $i \in I$ such
  that $g_1 \in U_i$. Then \eqref{eld} immediately implies $V_{g_1}
  \subset U_i$ and it remains to show $V_{g_2} \subset U_i$. But we
  have
  \begin{equation*}
     d(g_1,g_2)\le d(g_1,g_0)+d(g_0,g_2)\le \epsilon_{g_1} +
     \epsilon_{g_2} \le 2 \epsilon_{g_1}.    
  \end{equation*}
  So, for any $g \in V_{g_2}$, we compute
  \begin{equation*}
    d(g,g_1)\le d(g,g_2) + d(g_1,g_2)\le \epsilon_{g_2} + 2
    \epsilon_{g_1}\le 3 \epsilon_{g_1},
  \end{equation*}
  and conclude that $V_{g_2} \subset B_{3\epsilon_{g_1}}(g_1)$. It
  follows from \eqref{eld} that $V_{g_2} \subset U_i$.
\end{proof}
\begin{proof}[Proof of proposition]
  Using the continuity of multiplication, it is not hard to show that
  each point $m \in M$ has a neighbourhood $U_m$ in $Z_\mathrm{big}$
  such that $h, g \in U_m \Rightarrow hg \in Z_\mathrm{big}$, whenever
  $h$ and $g$ are multipliable. The open neighbourhood $U:=\cup_{m \in
    M} U_m $ of $M$ is a paracompact, Hausdorff, smooth manifold, and
  is therefore metrizable. Applying the preceding lemma, we
  obtain a mapping $g \mapsto V_g$ from $U$ to open subsets of $U$
  such that $g \in V_g$ and
  \begin{equation*}
    \text{$V_{g_1}\cup V_{g_2} \subset U_n$ for some $n \in M$ 
      whenever $V_{g_1}\cap V_{g_2}\ne \emptyset$,} 
  \end{equation*}
  where $g,g_1,g_2$ are arbitrary.  In particular, since $M \subset
  U$, we have $m \in V_m$ and
  \begin{equation}
    \text{$V_{m_1}\cup V_{m_2}
      \subset U_n$ for some $n \in M$ whenever $ V_{m_1}\cap V_{m_2}\ne \emptyset$,}\mathlab{jrd}
  \end{equation}
  where $m,m_1,m_2 \in M$ are arbitrary. We may also suppose that
  \begin{equation}
    \alpha(V_m) \subset V_m \cap M\quad\text{and}\quad \beta(V_m)
    \subset V_m\cap M; \qquad m \in M,\mathlab{spq}
  \end{equation}
  where $\alpha $ and $\beta $ are the source and target maps. For if
  not, replace each $V_m$ with $V_m \cap \alpha^{-1}(V_m\cap M)\cap
  \beta^{-1}(V_m \cap M)$, and \eqref{jrd} still holds.

  To show $Z :=\cup_{m \in M}V_m$ satisfies the requirements of the
  proposition, suppose $h,g \in Z$ are multipliable. Then $h \in
  V_{m_1}$ and $g \in V_{m_2}$ for some $m_1,m_2 \in M$ with
  $m:=\alpha(h)=\beta(g)$. Since $m = \alpha(h)$, \eqref{spq} implies
  $m \in V_{m_1}$; since $m = \beta(g)$, \eqref{spq} implies $m \in
  V_{m_2}$. So $V_{m_1}\cap V_{m_2}\ne\emptyset$, implying the
  existence of $n \in M$ such that $g,h \in U_n$, on account of
  \eqref{jrd}. By the definition of $U_n$, we have $hg \in Z_\mathrm{big}$.
\end{proof}

\subsection{Sufficient conditions for a local diffeomorphism to be a
  covering map}
\lab{uniformity}%
This appendix is devoted to a proof of the following:
\begin{proposition}
  Let $\phi \colon \tilde M \rightarrow M$ be a local diffeomorphism
  of smooth manifolds and suppose that $M$ is connected and
  admits a linear connection $\nabla$ such that the pullback $\tilde
  \nabla := \phi^* \nabla$ is a complete connection on $\tilde
  M$. Then $\phi \colon \tilde M \rightarrow M $ is a smooth covering
  map (and, in particular, is surjective).
\end{proposition}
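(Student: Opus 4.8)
The plan is to exploit the fact that $\phi$, carrying the pullback connection, is an \emph{affine} map: it intertwines the two exponential maps and enjoys a global geodesic-lifting property, with completeness of $\tilde\nabla$ being precisely what makes the lifts global. First I would record the basic identity. Since $\tilde\nabla=\phi^*\nabla$ and $\phi$ is a local diffeomorphism, $\phi$ carries $\tilde\nabla$-geodesics to $\nabla$-geodesics, so that
\[
\phi\circ \exp_{\tilde p}=\exp_{\phi(\tilde p)}\circ\, d\phi_{\tilde p}
\]
wherever both sides are defined, for every $\tilde p\in\tilde M$; here $\exp_{\tilde p}$ denotes the exponential map of $\tilde\nabla$ at $\tilde p$ and $\exp_{\phi(\tilde p)}$ that of $\nabla$ at $\phi(\tilde p)$. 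Because $\tilde\nabla$ is complete, the left-hand exponential is defined on all of $T_{\tilde p}\tilde M$.

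Next I would establish the geodesic-lifting property: given $\tilde p$ and any $\nabla$-geodesic $\gamma\colon[0,1]\to M$ with $\gamma(0)=\phi(\tilde p)$, the $\tilde\nabla$-geodesic $\tilde\gamma$ with $\tilde\gamma(0)=\tilde p$ and $\tilde\gamma'(0)=(d\phi_{\tilde p})^{-1}\gamma'(0)$ is defined on all of $[0,1]$ by completeness and satisfies $\phi\circ\tilde\gamma=\gamma$, by the displayed identity and uniqueness of geodesics; this lift is unique, its initial velocity being forced. Surjectivity then follows quickly: the image $\phi(\tilde M)$ is open since $\phi$ is a local diffeomorphism, and it is closed because any $q$ in its closure admits a normal ball $U=\exp_q(V)$ meeting the image in some $q'=\phi(\tilde p')$, whereupon lifting the radial geodesic of $U$ from $q'$ to $q$, starting at $\tilde p'$, produces a preimage of $q$. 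As $M$ is connected, $\phi(\tilde M)=M$.

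The heart of the proof is the even-covering step. I would fix $q\in M$ and a normal ball $U=\exp_q(V)$, with $V\subset T_qM$ a star-shaped neighbourhood of $0$ on which $\exp_q$ is a diffeomorphism onto $U$. For each $\tilde q\in\phi^{-1}(q)$, set $\tilde V_{\tilde q}=(d\phi_{\tilde q})^{-1}(V)$ and $\tilde U_{\tilde q}=\exp_{\tilde q}(\tilde V_{\tilde q})$, well-defined since $\tilde\nabla$ is complete. The displayed identity shows $\phi\circ\exp_{\tilde q}=\exp_q\circ\, d\phi_{\tilde q}$ is a diffeomorphism of $\tilde V_{\tilde q}$ onto $U$; hence $\exp_{\tilde q}$ is injective on $\tilde V_{\tilde q}$ with everywhere-invertible differential there (using that $d\phi$ is invertible), so it is a diffeomorphism onto $\tilde U_{\tilde q}$, and $\phi|_{\tilde U_{\tilde q}}\colon\tilde U_{\tilde q}\to U$ is a diffeomorphism. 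It then remains to show the sheets $\tilde U_{\tilde q}$ are pairwise disjoint and exhaust $\phi^{-1}(U)$. Both reduce to uniqueness of lifts: given $\tilde x\in\phi^{-1}(U)$, write $\phi(\tilde x)=\exp_q(w)$ with $w\in V$ and lift the reversed radial geodesic $t\mapsto\exp_q((1-t)w)$ from $\phi(\tilde x)$ to $q$, starting at $\tilde x$; its endpoint $\tilde q\in\phi^{-1}(q)$ is determined by $\tilde x$, and unwinding the lift exhibits $\tilde x\in\tilde U_{\tilde q}$. Since the endpoint depends only on $\tilde x$, no point lies in two distinct sheets, which gives disjointness. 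Thus $U$ is evenly covered, and together with surjectivity this shows $\phi$ is a smooth covering map.

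I expect the even-covering step to be the main obstacle, and within it the verification that $\phi^{-1}(U)$ is \emph{exactly} the disjoint union of the sheets $\tilde U_{\tilde q}$: this is where completeness (guaranteeing each lift reaches $q$) and uniqueness of geodesics (pinning down the sheet) are both essential. The affine intertwining identity and the resulting global geodesic-lifting property are the two levers that render everything else routine.
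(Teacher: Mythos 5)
Your proof is correct and is essentially the paper's: the same geodesic-lifting lemma (completeness of $\tilde\nabla$ supplying globally defined lifts, invertibility of $d\phi$ forcing their uniqueness), the same strategy of evenly covering normal neighbourhoods by lifting radial geodesics, and the same open-plus-closed argument for surjectivity on the connected manifold $M$. The only cosmetic difference lies in the bookkeeping of the even-covering step: you index the sheets over a normal ball $U$ by the points of the fibre $\phi^{-1}(q)$ via the exponential maps, using uniqueness of lifts for disjointness and exhaustion, whereas the paper indexes them by connected components of the preimage and invokes connectedness to show each component carries exactly one lifted section --- two interchangeable organizations of the same argument.
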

\begin{lemma}[Lifting geodesics]
  Assume the hypotheses of the proposition hold. Then for any geodesic
  $\gamma \colon [a,b] \rightarrow M$ for $\nabla$, and any point
  $\tilde m \in \tilde M$ satisfying $\phi(\tilde m) = \gamma(t_0)$,
  for some $t_0 \in [a,b]$, there exists a unique geodesic $\tilde
  \gamma \colon [a,b] \rightarrow \tilde M$ for $\tilde \nabla $ such
  that $\gamma=\phi \circ \tilde \gamma $ and $\tilde
  \gamma(t_0)=\tilde m$.
\end{lemma}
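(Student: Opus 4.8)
The plan is to construct $\tilde\gamma$ by prescribing its initial data at $t_0$ and invoking completeness, and then to identify $\phi\circ\tilde\gamma$ with $\gamma$ using uniqueness of geodesics. The starting observation is that, because $\phi$ is a local diffeomorphism and $\tilde\nabla=\phi^*\nabla$, the map $\phi$ is locally affine: on any open set where $\phi$ restricts to a diffeomorphism it intertwines $\tilde\nabla$ with $\nabla$, by the very definition of the pullback connection. Consequently $\phi$ carries $\tilde\nabla$-geodesics to $\nabla$-geodesics, and its tangent map $T_{\tilde m}\phi\colon T_{\tilde m}\tilde M\rightarrow T_{\gamma(t_0)}M$ is a linear isomorphism at every point.

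First I would define the candidate lift. Since $T_{\tilde m}\phi$ is invertible, set $\tilde v:=(T_{\tilde m}\phi)^{-1}\dot\gamma(t_0)\in T_{\tilde m}\tilde M$, and let $\tilde\gamma$ be the maximal $\tilde\nabla$-geodesic with $\tilde\gamma(t_0)=\tilde m$ and $\dot{\tilde\gamma}(t_0)=\tilde v$. Completeness of $\tilde\nabla$ guarantees that $\tilde\gamma$ is defined on all of ${\mathbb R}$, hence in particular on $[a,b]$. This is the one and only place where the completeness hypothesis is used, and it is exactly what upgrades the purely local lift (obtainable from a local inverse of $\phi$ on a chart) to a lift over the entire interval.

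Next I would verify $\phi\circ\tilde\gamma=\gamma$. Both curves are $\nabla$-geodesics defined on $[a,b]$: the curve $\phi\circ\tilde\gamma$ because $\phi$ sends geodesics to geodesics, and $\gamma$ by hypothesis. They share initial data at $t_0$, since $\phi(\tilde\gamma(t_0))=\phi(\tilde m)=\gamma(t_0)$ and $\tfrac{d}{dt}\big|_{t_0}(\phi\circ\tilde\gamma)=T_{\tilde m}\phi(\tilde v)=\dot\gamma(t_0)$. By uniqueness of solutions of the geodesic equation, propagated across the connected interval $[a,b]$ by the usual open-and-closed argument, the two curves coincide on $[a,b]$. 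Uniqueness of the lift follows the same way: any $\tilde\nabla$-geodesic $\tilde\gamma'$ with $\tilde\gamma'(t_0)=\tilde m$ and $\phi\circ\tilde\gamma'=\gamma$ satisfies $T_{\tilde m}\phi(\dot{\tilde\gamma}'(t_0))=\dot\gamma(t_0)$, forcing $\dot{\tilde\gamma}'(t_0)=\tilde v$ by invertibility of $T_{\tilde m}\phi$; hence $\tilde\gamma'$ and $\tilde\gamma$ carry identical initial data and therefore agree.

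The only point requiring genuine care --- and the main (modest) obstacle --- is the passage from local to global. A priori one has a lift of $\gamma$ only on a subinterval about $t_0$, via a local inverse of $\phi$, and nothing prevents such local lifts from failing to extend if $\tilde\nabla$ were incomplete. Framing the lift as a single maximal $\tilde\nabla$-geodesic and invoking completeness sidesteps this entirely, reducing the whole argument to the standard existence-and-uniqueness theory for the geodesic equation together with the connectedness of $[a,b]$.
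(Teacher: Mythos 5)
Your proposal is correct and follows essentially the same route as the paper: lift the initial velocity $\dot\gamma(t_0)$ through the isomorphism $T_{\tilde m}\phi$, use completeness of $\tilde\nabla$ to obtain the geodesic on all of $[a,b]$, observe that $\phi\circ\tilde\gamma$ is a $\nabla$-geodesic with the same initial data as $\gamma$, and conclude by uniqueness of geodesics with prescribed velocity. Your explicit treatment of the uniqueness of the lift and of the local-affineness of $\phi$ merely spells out steps the paper leaves implicit.
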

\begin{proof}[Proof of lemma]
  Since $\phi$ is a local diffeomorphism, there exits a unique tangent
  vector $v \in T_{\tilde m}\tilde M $ such that by $T \phi \cdot v =
  \dot \gamma(t_0)$. Let $\tilde \gamma \colon [a,b] \rightarrow
  \tilde M$ be the unique geodesic for $\tilde \nabla $ such that
  $\dot {\tilde \gamma}(t_0)=v$, which exists because $\tilde \nabla $
  is complete. Then $\gamma' := \phi \circ \tilde \gamma $ must be a
  geodesic for $\nabla $ and satisfy $\dot \gamma'(t_0)=\dot
  \gamma(t_0)$. By the uniqueness of geodesics with prescribed
  velocity at a point, we have $\gamma'=\gamma$, so that $\tilde
  \gamma $ has the desired properties.
\end{proof}
\begin{proof}[Proof of proposition]
  We show that $\phi $ evenly covers normal neighbourhoods for the
  connection $\nabla $. Indeed, let $m_0 \in M$ be arbitrary and let
  $B \subset M$ be a normal neighbourhood about $m_0$ for
  $\nabla$. That is, $B$ is the diffeomorphic image of some open set
  $U \subset T_{m_0}M$ under the map $\exp_{m_0}\colon U \rightarrow
  M$ well-defined by the requirement that $\gamma(t):= \exp_{m_0}(tv)$
  be a geodesic for $\nabla $ satisfying $\gamma(0)=m_0$ and $\dot
  \gamma (0)=v$, for each choice of $v \in T_{m_0}M$.

  We first suppose $\phi^{-1}(B)$ is non-empty. Later, we show that
  $\phi$ is surjective, so that this is no restriction. Let $\tilde B$
  be any connected component of $\phi^{-1}(B)$. We must show the
  restriction $\phi \colon \tilde B \rightarrow B$ is a
  homeomorphism. To this end, consider the discrete set $X:=\tilde B
  \cap \phi^{-1}(m_0)$. To see that $X$ is non-empty let $\tilde m$ be
  any point in $\tilde B$ (non-empty by hypothesis) and let $\gamma
  \colon [0,1] \rightarrow M$ be the geodesic joining $m_0 \in B$ to
  $\phi(\tilde m) \in B$. Then if $\tilde \gamma \colon [0,1]
  \rightarrow \tilde M$ is the lift of $\gamma $ which ends at $\tilde
  m$, whose existence is guaranteed by the lemma, then $\tilde
  \gamma(0)\in X$, because $\tilde \gamma^{-1}(0)$ and $\tilde
  \gamma^{-1}(1)$ must lie in the same connected component of $\phi
  ^{-1}(B)$.

  In fact $X$ has just one element. To see this construct, for each $x
  \in X$, a smooth right-inverse $s_x \colon B \rightarrow \tilde B$
  for the restriction $\phi \colon \tilde B \rightarrow B$ as follows:
  If $m \in B$ is arbitrary and $\gamma \colon [0,1] \rightarrow B$ is
  the unique geodesic joining $m_0$ to $m$, then $s_x(m):=\tilde
  \gamma(1)$, where $\tilde \gamma \colon [0,1] \rightarrow \tilde B$
  is the lift of geodesic $\gamma$ satisfying $\tilde \gamma
  (0)=x$. Being a right-inverse, the derivative of $s_x$ has full
  rank, i.e., is a local diffeomorphism. In particular, its image
  $s_x(B)$ is an open subset of $\tilde B$. It is easy to see that the
  open sets $s_x(B)$, $x \in X$, are disjoint (use the uniqueness of
  geodesic lifts with prescribed end-point) and cover $\tilde B$. But
  $\tilde B$ is {\em connected}, implying $X$ has a single element,
  $x_0$, say. Moreover, the single right-inverse $s_{x_0}\colon B
  \rightarrow \tilde B$ must be surjective.

  Being surjective, the smooth right-inverse $s_{x_0}\colon B
  \rightarrow \tilde B$ is a two-sided inverse for the restriction
  $\phi \colon \tilde B \rightarrow B$, which is accordingly a
  homeomorphism.

  To summarise, every normal neighbourhood $B \subset M$ for $\nabla $
  is evenly covered by $\phi$ whenever it has non-trivial intersection
  with the image of $\phi$. In particular, if a point $m_0 \in M$ is
  {\em not} in this image, then any normal neighbourhood of $m_0$ must
  intersect $\phi(\tilde M)$ in an empty set. So the complement of
  $\phi(\tilde M)$ is open in $M$. Since $\phi(\tilde M) $ is itself
  open ($\phi$ is a local diffeomorphism) and $M$ is connected, we
  conclude that $\phi$ is surjective.
\end{proof}

%\bibliography{dynamics2}

\end{document}